\newtheorem {theorem} {Theorem}
\newtheorem {proposition} [theorem]{Proposition}
\newtheorem {corollary} [theorem]{Corollary}
\newtheorem {lemma}  [theorem]{Lemma}
\newtheorem {notation} [theorem]{Notation}
\newtheorem{Defi}[theorem]{Definition}
\title{\large\bf On the Dirichlet Problem at Infinity and Poisson Boundary for Certain Manifolds without Conjugate Points}
\author{
\and Fei Liu
\thanks{College of Mathematics and Systems Science, Shandong University of Science and Technology, Qingdao, 266590, P.R. China.
e-mail: liufei$@$math.pku.edu.cn }
\and Yinghan Zhang
\thanks{College of Mathematics and Systems Science, Shandong University of Science and Technology, Qingdao, 266590, P.R. China.
e-mail: zhangyinghan007@126.com }
\and Dedicated to Professor Xiang Zhang on the occasion of his 60th birthday
}
\date{\today}
\begin{document}
\maketitle

\begin{abstract}
In this paper, we investigate the problem of the existence of the bounded harmonic functions on a simply connected Riemannian manifold $\widetilde{M}$ without conjugate points, which can be compactified via the ideal boundary $\widetilde{M}(\infty)$. Let $\widetilde{M}$ be a uniform visibility manifold which satisfy the Axiom $2$,
or a rank $1$ manifold without focal points, suppose that $\Gamma$ is a cocompact discrete subgroup of $Iso(\widetilde{M})$, we show that for a given continuous function on $\widetilde{M}(\infty)$, there exists a harmonic extension to $\widetilde{M}$. And furthermore, when $\widetilde{M}$ is a rank $1$ manifold without focal points, the Brownian motion defines a family of harmonic measures $\nu_{\ast}$ on $\widetilde{M}(\infty)$, we show that $(\widetilde{M}(\infty),\nu_{\ast})$ is isomorphic to the Poisson boundary of $\Gamma$.
 \\

\noindent {\bf Keywords and phrases:} bounded harmonic functions, the Dirichlet problem at infinity, manifolds without conjugate points  \\

\noindent {\bf AMS Mathematical subject classification (2020):}
31C12, 58J32.
\end{abstract}


\section{\bf Introduction and Main Results}\label{intro}
\setcounter{section}{1}
\setcounter{equation}{0}\setcounter{theorem}{0}
The problem of the existence of the bounded harmonic functions on a simply connected manifold is
an important and fascinating research topic in modern mathematics.
It is well known that every bounded harmonic function on $\mathbb{R}^{n}$ is constant.
One of the basic ingredient of the proof for this fact is that the growth rate of the volume of balls with respect to the radius is polynomial.
On the other hand, there are lots of simply connected Riemannian manifolds whose volume are exponentially growth.
In this paper, we will concern the existence of the bounded harmonic functions on such manifolds.

Let $(M,g)$ be a compact Riemannian manifold, and $(\widetilde{M},\widetilde{g})$ be its universal covering manifold.
If we denote by $\Gamma\triangleq\pi_1(M)$,
then $\Gamma$ can be viewed as a discrete subgroup of the isometry group of $\widetilde{M}$, and $M=\widetilde{M}/\Gamma$.
To simplify the notations, we shall denote $(M,g)$ and $(\widetilde{M},\widetilde{g})$ simply by $M$ and $\widetilde{M}$, respectively.
Furthermore, we use $T^1 M$ and $T^1\widetilde{M}$ to denote the unit tangent bundles of $M$ and $\widetilde{M}$, respectively.
For any $v\in T^1 M$ or $v\in T^1 \widetilde{M}$, we use $c_{v}$ to denote the unique unit speed geodesic with the initial condition $c'_{v}(0)=v$.
We use $$v\mapsto \phi_{t}(v)=c'_{v}(t)$$ to denote the \emph{geodesic flow} on the unit tangent bundle $T^1 M$.

The volume growth rate of $\widetilde{M}$ has a profound connection with the dynamics of the geodesic flow on $T^1 M$.
The exponential growth rate of the volume of the balls in $\widetilde{M}$ is called $\mathbf{volume~entropy}$.
It is now a classical result that proved by A. Manning states that
the volume entropy of $\widetilde{M}$ equals to the topological entropy of the geodesic flow on $M$, when the manifold is non-positively curved,
see \cite{M} for more details. Later, this result is extended by A. Freire and R. Ma\~n\'e to the manifolds without conjugate points in \cite{FM}.
In this paper, we will consider the existence of the bounded harmonic functions on certain manifolds without conjugate points.

Let $c$ be a geodesic on $M$. A pair of distinct points $p=c(t_{1})$ and $q=c(t_{2})$ are called $\mathbf{focal}$ if there is a Jacobi field $J$ along $c$ such that $J(t_{1})=0$, $J'(t_{1})\neq 0$ and $\frac{d}{dt}\| J(t)\|^{2}\mid_{t=t_{2}}=0$; $p=c(t_{1})$ and $q=c(t_{2})$ are called $\mathbf{conjugate}$ if there is a non-identically-zero Jacobi field $J$ along $c$ such that $J(t_{1})=0=J(t_{2})$.

A typical example is the standard two dimensional sphere $S^{2}$, each pair of the antipodal points are conjugate, and each pair of points (N, p) are focal where
$N$ is the north pole and $p$ is any points in the equator.

\begin{Defi}  \label{def0}
$M$ is called a manifold \emph{without focal points / without conjugate points} if there is no focal points / conjugate points on any geodesic in $M$.
The universal cover $\widetilde{M}$ of $M$ is called a manifold \emph{without focal points / without conjugate points} if $M$ does.
\end{Defi}

It's easy to see that the fact ``no focal points" implies ``no conjugate points". It is well known that non-positively curved manifolds having no focal points. Manifolds without focal points / without conjugate points can be regarded as a non-trivial generalization of the manifolds with non-positive curvature, since there are manifolds without focal points / without conjugate points whose curvature are not everywhere non-positive (cf. \cite{Gu}).

One can generalize the classical Cartan-Hadamard theorem from the manifolds with non-positive curvature to the manifolds without conjugate points,
thus $\widetilde{M}$ is diffeomorphic to $\mathbb{R}^{n}$ where $n=\mathrm{dim} \widetilde{M}$ (cf. \cite{OS2}).
We shall endow $\widetilde{M}$ with a boundary, called the ideal boundary, denoted by $\widetilde{M}(\infty)$,
such that $\widetilde{M} \cup\widetilde{M}(\infty)$ is compact under the so called cone topology.

As we all known that the volume entropy of a pinched simply connected negatively curved manifold is positive, i.e., the volume growth is exponential.
However, when we turn our attention to the manifolds with non-positive curvature or, more generally,
the manifolds without focal points or the manifolds without conjugate points, the situation becomes significantly more complicated.
In all the three cases, there exist examples where the volume entropy is either zero or positive.

Therefore, a natural question is: which non-positively curved manifolds (or manifolds without focal points, or manifolds without conjugate points)
have positive exponential volume growth rates?

For the non-positively curved manifolds or the manifolds without focal points, rank is a concept of fundamental importance (cf. \cite{BBE}).
We note that the geodesic flow on such manifolds admits a unique measure of maximal entropy (cf. \cite{Kn, LWW}),
thus by the classical results of \cite{M, FM} mentioned above,
we know that the volume entropies are positive, i.e., the volume of the balls in these manifolds have positive exponential growth rates.

For the manifolds without conjugate points,
unfortunately, there is no uniform definition of rank.
We note that G. Knieper and N. Peyerimhoff in \cite{KP} and L. Rifford and R. Ruggiero in \cite{RR} both give definition of the
rank separately, but their definitions are different.
Recently, E. Mamani and R. Ruggiero have investigated the existence and uniqueness of maximal entropy measures for geodesic flows on manifolds without conjugate points,
under some additional conditions (cf. \cite{MR}). While investigating the Hopf-Tsuji-Sullivan (HTS) dichotomy theory,
the first author of this paper and his collaborators in \cite{LLW} (see also \cite{Wu1, Wu3}), study the geometry and dynamics for a class of non-compact manifolds without conjugate points systematically.
By \cite{MR}, it can be seen that a compact manifold satisfying the conditions given in our paper  \cite{LLW} admits a measure of maximal entropy,
thus the exponential growth rates of volume of such manifolds without conjugate points are positive.

In this paper, we shall investigate the existence of the bounded harmonic functions on such simply connected Riemannian manifold, i.e.,
simply connected rank $1$ manifolds without focal points or simply connected manifolds without conjugate points, with some additional conditions as described in \cite{LLW}.
More precisely we will consider the so called Dirichlet problem at infinity (or the asymptotic Dirichlet problem) on such manifolds.

$\mathbf{Dirichlet~problem~at~infinity}$:
Given a real continuous function $f$ on the ideal boundary $\widetilde{M}(\infty)$, does there exist a harmonic function $h$ on $\widetilde{M}$
whose continuous extension to the ideal boundary coincides precisely with $f$?

For the pinched negatively curved manifolds, the Dirichlet problem at infinity is valid for any continuous function $f$ on the ideal boundary (cf. \cite{An, S}).
Then W. Ballmann in \cite{B} solved this problem on the rank $1$ manifolds with non-positive curvature.

The following Theorem is one of the main results of this paper.

\begin{theorem}[The solution of the Dirichlet problem at infinity beyond non-positive curvature]\label{Dirichlet}
Let $\widetilde{M}$ be a simply connected uniform visibility manifold without conjugate points which satisfies the Axiom $2$,
or a rank $1$ simply connected manifold without focal points,
and let $\Gamma$ be a cocompact discrete subgroup of the isometry group $Iso(\widetilde{M})$, i.e., $\widetilde{M}/\Gamma$ is compact,
then the Dirichlet problem at infinity can be solved for any continuous function on the ideal boundary $\widetilde{M}(\infty)$.
\end{theorem}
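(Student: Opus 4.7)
The plan is to adapt the strategy of Anderson--Sullivan and Ballmann to the present non-curvature setting. For a given $f \in C(\widetilde{M}(\infty))$ I would define a candidate harmonic extension $H_f$ on $\widetilde{M}$ either through the Perron method, as the supremum of all subharmonic functions on $\widetilde{M}$ whose upper limit on $\widetilde{M}(\infty)$ is dominated by $f$, or equivalently via Brownian motion by $H_f(x) = \mathbb{E}_x[f(X_\infty)]$, where $X_\infty$ is the almost-sure limit in $\widetilde{M}(\infty)$ of a Brownian path started at $x$. Either formulation produces a bounded harmonic function on $\widetilde{M}$ thanks to the mean value property and standard elliptic regularity, so the whole issue is to verify the boundary behavior $\lim_{x\to\xi} H_f(x)=f(\xi)$ for every $\xi\in\widetilde{M}(\infty)$.

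To make the Perron/Brownian definition even make sense, I would first show that Brownian motion on $\widetilde{M}$ almost surely exits at a point of $\widetilde{M}(\infty)$, producing a family of hitting measures $\{\nu_x\}_{x\in\widetilde{M}}$ on $\widetilde{M}(\infty)$. The uniform visibility hypothesis together with Axiom~$2$, or the rank~$1$ no focal points structure together with cocompactness of $\Gamma$, guarantees that geodesic rays from a fixed basepoint spread out at a quantitatively controlled angular rate, which combined with the compactness of $\widetilde{M}\cup\widetilde{M}(\infty)$ under the cone topology forces the radial part of Brownian motion to escape to infinity and the angular part to stabilize. This step follows the standard transience plus angular-convergence argument, adapted from the non-positively curved case by replacing convexity of Busemann functions with the visibility estimates (in the first case) or the surviving convexity properties of Busemann functions on manifolds without focal points (in the second).

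The heart of the proof is the construction of \emph{barriers} at each ideal point $\xi$. Given a neighborhood $U\subset\widetilde{M}(\infty)$ of $\xi$ in the cone topology, I would build a non-negative continuous superharmonic (or suitable subharmonic) function $b$ on $\widetilde{M}$ that vanishes at $\xi$ and is bounded away from zero on $\widetilde{M}\setminus C$, where $C$ is a truncated cone at $\xi$ contained in $U$; averaging $f$ against $\nu_x$ and comparing with $b$ then forces $H_f(x_n)\to f(\xi)$ whenever $x_n\to\xi$. In the uniform visibility case, Axiom~$2$ provides precisely the angular separation needed to make the visibility function serve as such a barrier, essentially because two geodesic rays diverging in angle from $\xi$ must diverge in distance at a rate sufficient to control the Laplacian of a smoothed angle function. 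In the rank~$1$ no focal points case, I would first handle points $\xi$ lying on a rank~$1$ axis, where the local hyperbolic-like geometry (divergence of geodesics on both the stable and unstable sides of the axis, contraction of the geodesic flow on stable manifolds) permits a direct barrier construction in the spirit of Ballmann; the general case then follows by density of rank~$1$ endpoints in $\widetilde{M}(\infty)$ together with minimality of the $\Gamma$-action on the limit set, which equals all of $\widetilde{M}(\infty)$ by cocompactness.

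The main obstacle is exactly this barrier construction: without the non-positive curvature assumption, one loses the convexity of Busemann functions and horoballs that underlies Ballmann's original argument. The way around it is to exploit the hypotheses carefully: visibility plus Axiom~$2$ gives quantitative two-point control of geodesics, which substitutes for convexity in producing super/subharmonic comparison functions on cones at infinity, while the no focal points condition preserves just enough convexity of Busemann functions that, combined with the rank~$1$ hypothesis, the local geometry near a regular ideal point is sufficiently hyperbolic for a direct barrier. Once the barrier is established at enough points of $\widetilde{M}(\infty)$, cocompactness and $\Gamma$-equivariance propagate it uniformly, and a standard density/approximation argument reduces continuous $f$ to finite linear combinations of characteristic-type functions, completing the proof that $H_f$ extends continuously to $f$ on $\widetilde{M}(\infty)$.
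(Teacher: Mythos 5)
Your proposal hinges on two steps that you assert rather than prove, and both are exactly the points where the classical Anderson--Sullivan/barrier strategy breaks down beyond pinched negative curvature. First, the barrier construction: in a rank $1$ manifold without focal points (or a visibility manifold without conjugate points) there may be flat strips, directions of zero (or, in the conjugate-point-free case, positive) sectional curvature, and Busemann functions are at best convex, never strictly so; uniform visibility and Axiom $2$ are large-scale statements about geodesics and give no pointwise control of the Laplacian of a smoothed angle function, so there is no known way to produce a superharmonic function vanishing at $\xi$ and bounded below off a cone. A telling consistency check: a barrier argument would never use the group $\Gamma$, so it would solve the Dirichlet problem for \emph{every} such simply connected manifold, whereas both Ballmann's nonpositively curved theorem and the present one require a cocompact lattice in an essential way. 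Second, the almost sure convergence of Brownian motion to a single point of $\widetilde{M}(\infty)$ is not ``standard transience plus angular convergence'' here; in this paper it is itself a main result (Lemma \ref{brownianwalk}), and it is obtained only indirectly, through the random walk.

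For comparison, the paper's route is entirely different: it discretizes Brownian motion \`a la Lyons--Sullivan to get a probability measure $\mu$ on $\Gamma$ with finite (exponential) moments whose $\mu$-harmonic functions correspond to harmonic functions on $\widetilde{M}$ (Subsection \ref{definenu}); it proves the key boundary regularity statement at the group level, namely $h_f(\omega_n)\to f(\eta_0)$ whenever $\omega_n p\to\eta_0$ (Theorem \ref{hfgamman}), using the contraction dynamics of rank $1$ axial isometries, $\Gamma$-duality and the stationary measure $\nu$ (Proposition \ref{key}, Lemma \ref{corollary1.7}); it then gets a.s. convergence of the $\mu$-random walk to the boundary via martingale convergence (Theorem \ref{gammanp}), transfers this to Brownian motion using the Lyons--Sullivan coupling and Guivarc'h's positive linear drift (Lemmas \ref{Guivarch}, \ref{brownianwalk}), and finally upgrades convergence of $h$ along orbit points $\alpha_n p\to\xi$ to convergence along arbitrary sequences $q_n\to\xi$ by a Harnack-inequality and exit-measure argument inside the translated balls $\alpha_n(B)$. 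If you want to salvage your write-up, you would either have to supply a genuine barrier construction valid in the presence of flats (which is not available), or replace the second and third paragraphs of your plan by this discretization scheme.
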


All the notions appeared in Theorem~\ref{Dirichlet} will be described in details in Section~\ref{geometry}.

As mentioned above, the manifolds appeared in Theorem~\ref{Dirichlet} have positive exponential volume growth rates.
We note that recently, in a remarkable paper \cite{Po}, P. Polymerakis studies the existence of positive harmonic functions on such manifolds,
and get some deep results.

Theorem~\ref{Dirichlet} indicates that $\widetilde{M}$ has a large class of bounded harmonic functions. Indeed,
by Theorem~\ref{Dirichlet}, for any continuous function $f:\widetilde{M}(\infty)\rightarrow \mathbb{R}$,
the harmonic function $h:\widetilde{M}\rightarrow \mathbb{R}$ is defined as
\begin{equation}\label{eq81}
p \longmapsto h(p)\triangleq \int_{\widetilde{M}(\infty)}f(\xi)d\nu_{p}(\xi),
\end{equation}
where $\nu_{p}$ is the hitting probability measure on the ideal boundary corresponding to the Brownian motion starting at $p$,
we denote by $\nu_{\ast}=\big\{\nu_{p}\big\}_{p\in \widetilde{M}}$.
For more details, please see Section~\ref{BMS}.

Let $\mathcal{H}^{\infty}(\widetilde{M})$ be the space of bounded harmonic functions on $\widetilde{M}$.
The classical Poisson representation formula shows that $\mathcal{H}^{\infty}(\mathbb{H}^{n})$ is isomorphic to $L^{\infty}(\mathbb{S}^{n-1})$
for the $n$-dimensional hyperbolic space $\mathbb{H}^{n}$.
It is widely known that the hyperbolic space $\mathbb{H}^{n}$ is one of the rank $1$ symmetric spaces of non-compact type. In \cite{Fu2},
H. Furstenberg extended this kind of representation to all the symmetric spaces of non-compact type.
Then M. Anderson and R. Schoen generalized this result to the pinched negatively curved manifolds in \cite{AS}.
Later, W. Ballmann and F. Ledrappier obtained such a representation formula for the rank 1 manifolds with non-positive curvature in \cite{BL}.

The following Theorem states that for the manifolds stated in Theorem~\ref{Dirichlet}, every bounded harmonic function on $\widetilde{M}$
can be represented as in Formula \eqref{eq81}.

\begin{theorem}\label{Poisson}
Let $\widetilde{M}$ be a rank $1$ simply connected manifold without focal points,
and $\Gamma$ be a cocompact discrete subgroup of the isometry group $Iso(\widetilde{M})$,
then the Formula \eqref{eq81} gives an isomorphism from $\mathcal{H}^{\infty}(\widetilde{M})$ to $L^{\infty}(\widetilde{M}(\infty),\nu_{\ast})$.
Furthermore, for each $h\in\mathcal{H}^{\infty}(\widetilde{M})$, the corresponding $f\in L^{\infty}(\widetilde{M}(\infty),\nu_{\ast})$
under this isomorphism satisfies
$$
f\big(\lim_{t\rightarrow +\infty}\omega(t)\big)=\lim_{t\rightarrow +\infty}h\big(\omega(t)\big),
$$
for almost every path $\big\{\omega(t)\big\}_{t\in \mathbb{R}^{+}}$ of the Brownian motion defined on $\widetilde{M}$.
\end{theorem}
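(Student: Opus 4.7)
The plan is to pass from the Dirichlet solvability of Theorem~\ref{Dirichlet} to the Poisson boundary identification in three stages: almost-sure convergence of Brownian motion to the ideal boundary, set-up of the Poisson transform, and the main bijectivity statement via a Kaimanovich-type identification of the Poisson boundary.

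First I would establish that for every $p\in\widetilde{M}$ the Brownian path $\omega(t)$ issued from $p$ converges almost surely in the cone topology to a random point $\omega_{\infty}\in\widetilde{M}(\infty)$, which defines the hitting distribution $\nu_{p}$. Fix a countable point-separating family $\{f_{n}\}$ of continuous functions on $\widetilde{M}(\infty)$; Theorem~\ref{Dirichlet} yields bounded harmonic extensions $h_{n}$, and each $h_{n}(\omega(t))$ is a bounded martingale that converges almost surely by Doob's theorem. On the common full-measure event the family $\{f_{n}\}$ pins down a unique cone-topology limit $\omega_{\infty}$. Defining the Poisson transform $\Phi(f)(p):=\int f\,d\nu_{p}$, the strong Markov property gives $\Phi(f)\in\mathcal{H}^{\infty}(\widetilde{M})$, and for continuous $f$ one has $\Phi(f)(\omega(t))\to f(\omega_{\infty})$ almost surely by the continuous boundary values guaranteed by Theorem~\ref{Dirichlet}. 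Harnack's inequality forces all the measures $\nu_{p}$ to share a common class, the harmonic class $\nu_{\ast}$.

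The heart of the theorem is the bijectivity of $\Phi$. Given $h\in\mathcal{H}^{\infty}(\widetilde{M})$, Doob's theorem produces a bounded limit $\hat h(\omega):=\lim_{t\to\infty}h(\omega(t))$, and the task is to exhibit $f\in L^{\infty}(\widetilde{M}(\infty),\nu_{\ast})$ with $\hat h=f\circ\omega_{\infty}$ almost surely. Once this is done, the identity $h(p)=\mathbb{E}_{p}[\hat h]=\int f\,d\nu_{p}=\Phi(f)(p)$ yields both surjectivity of $\Phi$ and the relation $\Psi\circ\Phi=\mathrm{id}$ for the inverse $\Psi:h\mapsto f$; injectivity of $\Phi$ then follows from $\Psi\Phi(f)=f$ for continuous $f$ and a density argument. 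Producing $f$ amounts to identifying the asymptotic $\sigma$-algebra of the Brownian motion, up to null sets, with the pullback of the Borel $\sigma$-algebra of $\widetilde{M}(\infty)$. My strategy is to discretize the Brownian motion via a Lyons-Sullivan procedure into a $\Gamma$-equivariant random walk with the same Poisson boundary, and then verify Kaimanovich's strip criterion for the $\Gamma$-action on $(\widetilde{M}(\infty),\nu_{\ast})$: for $\nu_{\ast}\otimes\check\nu_{\ast}$-almost every pair $(\xi,\eta)\in\widetilde{M}(\infty)\times\widetilde{M}(\infty)$, assign the strip given by a bounded tubular neighborhood of the rank $1$ geodesic(s) from $\eta$ to $\xi$, and verify that this strip meets only sub-exponentially many $\Gamma$-translates of a fundamental domain inside a ball of radius $R$.

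The main obstacle is the verification of the strip criterion in the no-focal-points setting, where the absence of convex distance functions rules out a direct transplantation of the Ballmann-Ledrappier argument from the rank $1$ nonpositive curvature case. The substitute has to come from the structural results on rank $1$ manifolds without focal points recalled in the introduction, notably the existence and ergodicity of the Bowen-Margulis-Knieper measure of maximal entropy and the density/abundance of rank $1$ axes, which together yield connecting geodesics for $\nu_{\ast}\otimes\check\nu_{\ast}$-generic boundary pairs, while cocompactness of $\Gamma$ and the uniform visibility / Axiom $2$ structure control the width of the strip. Once the criterion is in place, $(\widetilde{M}(\infty),\nu_{\ast})$ is automatically the Poisson boundary, and the pointwise identity $f(\omega_{\infty})=\lim_{t\to\infty}h(\omega(t))$ is immediate from the martingale limit construction of $f$.
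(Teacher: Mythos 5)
Your overall architecture matches the paper's infrastructure (Lyons--Sullivan discretization of Brownian motion, reduction to the random walk $(\Gamma,\mu)$, transfer back via \cite{BL}, and the need for rank $1$ connecting geodesics for $\check\nu\otimes\nu$-generic boundary pairs), but your key identification step takes a genuinely different route. The paper does \emph{not} use Kaimanovich's strip criterion: it uses the Kaimanovich--Vershik entropy criterion, i.e.\ it checks that the Lyons--Sullivan measure $\mu$ has finite first moment and finite entropy, that $\beta(\widetilde{M}(\infty),\nu)\leq\beta(\mu)$ with equality iff \eqref{isomorphism} is an isomorphism (citing \cite{KV}), and then proves the reverse inequality $\beta(\widetilde{M}(\infty),\nu)\geq\beta(\mu)$ by a covering and counting argument based on the boundary gauge $d_{p,D}$, the a.s.\ linear drift $A$ from Guivarc'h, and the geodesic-tracking estimate of Lemma~\ref{-1ndpd}. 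Your strip-criterion route (tube of radius $R$ around the rank $1$ geodesic joining $\eta$ to $\xi$, linear hence subexponential orbit count in balls) is a legitimate and arguably shorter alternative once the same preliminary facts are in place; what the paper's route buys is that all the needed estimates are explicit in terms of the gauge $d_{p,D}$ and do not require the full equivariance/measurability bookkeeping of the strip map, while your route avoids the ball-covering argument of Lemma~\ref{sigman} and the final entropy comparison.

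Two points in your write-up need repair. First, the fact that $\check\nu\otimes\nu$-a.e.\ pair is joined by a rank $1$ geodesic does not come from the Bowen--Margulis--Knieper measure of maximal entropy or from "uniform visibility / Axiom $2$" (which is not even a hypothesis of Theorem~\ref{Poisson}); in the paper it follows from ergodicity of the diagonal $\Gamma$-action on $\widetilde{M}^{2}(\infty)$ with respect to $\check\nu\otimes\nu$ (pulled back from ergodicity of the two-sided shift, Lemma~\ref{le19}) together with openness of the set $\mathfrak{R}$ of rank $1$ pairs and positivity of $\nu$ on open sets (Lemma~\ref{le16}(4), via minimality of the boundary action). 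You must supply this argument, since density of rank $1$ vectors alone does not show that the harmonic measure class charges $\mathfrak{R}$ with full measure. Second, your first-stage argument (a countable separating family of continuous boundary functions plus Doob convergence of $h_n(\omega(t))$) does not by itself yield convergence of the Brownian path to $\widetilde{M}(\infty)$: one must first know that $d(p,\omega(t))\to\infty$ a.s., which the paper obtains from the Lyons--Sullivan tracking property and the positive drift of Lemma~\ref{Guivarch} (Lemma~\ref{brownianwalk}); also note that the strip criterion itself requires finite entropy of $\mu$, which you never verify (the paper checks it using the exponential volume growth bound and Lemma~\ref{finitemoment}). With these repairs your proposal would go through, but as written these are the gaps.
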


The proofs of Theorem~\ref{Dirichlet} and Theorem~\ref{Poisson}
are built upon the fundamental works of W. Ballmann, F. Ledrappier and other mathematicians.
We extend their conclusions beyond non-positively curved manifolds.
Based on P. Eberlein's foundational work on the geometry of manifolds without conjugate points,
combined with our prior investigations of the dynamics of geodesic flows on such manifolds, we have overcome a series of geometric obstacles,
thereby extending the aforementioned mathematicians' seminal works to the framework of manifolds without conjugate points.

This paper is organized as follows.
In Section~\ref{geometry} we investigate the geometric properties of the manifolds without conjugate points.
In Section~\ref{BMS} we prove the Theorem~\ref{Dirichlet}.
In Section~\ref{question}, we prove the Theorem~\ref{Poisson}.

\section{\bf{Geometry of the Manifolds without Conjugate Points}}\label{geometry}

In this section, we will summarize all the notions that we needed in this paper, and we also investigate some geometric properties of the manifolds without conjugate points.

Let $M$ be a compact Riemannian manifold, and $\widetilde{M}$ be its universal covering manifold.
Let $\Gamma\triangleq\pi_1(M)$, then $M=\widetilde{M}/\Gamma$.
In this paper, we always assume that $M=\widetilde{M}/\Gamma$ is compact, and $\widetilde{M}$ is a simply connected uniform visibility manifold without conjugate points which satisfies the Axiom $2$, or $\widetilde{M}$ is a rank $1$ simply connected manifold without focal points.

As mentioned in Section~\ref{intro}, the concept of rank was initially introduced for non-positively curved manifolds.
Since the flat strip theorem is also valid for the manifolds without focal points, this concept can be generalized to the manifolds without focal points.

\begin{Defi}[cf. \cite{BBE}] \label{def1}
Let $M$ be a manifold without focal points, for each $v \in T^{1}M$, the \emph{rank} of $v$, denoted by \emph{\text{rank}($v$)}, is defined as the dimension of the vector space of parallel Jacobi fields along the geodesic $c_{v}$, and the rank of $M$ is defined as \emph{\text{rank}($M$):=$\min\{$\text{rank}$(v) \mid v \in T^{1}M\}$}. For a geodesic $c$, the rank of the geodesic $c$ is defined as \emph{\text{rank}($c$)}=\emph{\text{rank}($c'(t)$)}, $\forall\ t\in \mathbb{R}$.
The rank $1$ vectors are also called regular vectors, and the rank $1$ geodesics are called regular geodesics.
\end{Defi}

Throughout this paper we always assume that the geodesics are unit speed.
Two geodesics $c_{1}$ and $c_{2}$ in $\widetilde{M}$ are called $\mathbf{positively~asymptotic}$,
if there is a positive constant $C$ such that
$$d(c_1(t),c_2(t))\leq C, \quad\forall t\geq 0.$$
The positively asymptotic is an equivalence relation among geodesics on $\widetilde{M}$,
the set of the equivalence classes is called the $\mathbf{ideal~boundary}$ and is denoted by $\bf{\widetilde{M}(\infty)}$.

For the pinched negatively curved manifolds, it is well known that for each pair $(\xi,\eta)$ of $\widetilde{M}(\infty)$ with $\xi\neq\eta$,
there is one and only one geodesic connecting them. But when we consider non-positively curved manifolds, or more generally,
the manifolds without focal points or the manifolds without conjugate points, the situation becomes much more complicated:
for $\xi,\eta\in\widetilde{M}(\infty)$ with $\xi\neq\eta$, there may not exist a geodesic connecting them;
and even when the connecting geodesic do exist, it is not necessarily unique.
For the manifolds without focal points, the flat strip theorem implies that if there exist two connecting geodesics,
then there exists a family of infinitely many connecting geodesics, such a family of connecting geodesics forms a flat strip.

\begin{Defi} [cf. \cite{EO}] \label{def1}
Let $\widetilde{M}$ be a complete simply connected Riemannian manifold without conjugate points,
we call it satisfies \emph{Axiom~2}, if for any points $\xi,\eta \in \widetilde{M}(\infty)$ with $\xi\neq\eta$, there exists at most one geodesic connecting them.
\end{Defi}

In a sense, the condition of ``without conjugate points" is so broad that we need to add some additional conditions to derive important geometric properties
and fine (globally and locally) estimations. Visibility is such a condition, which was first introduced by Patrick Eberlein.

\begin{Defi} [cf. \cite{EO,Eb1}] \label{def2}
$\widetilde{M}$ is called a \emph{visibility~manifold} if for any $p\in\widetilde{M}$ and $\epsilon>0$, there exists a
constant $R_{p,\epsilon}>0$, such that for any geodesic (segment) $c:[a,b] \to \widetilde{M}$, $d(p,c)\geqslant R_{p,\epsilon}$ implies that $\measuredangle_p(c(a),c(b))\geqslant\epsilon$. Here we allow $a$ and $b$ to be infinity.
If the constant $R$ is independent of the choice of the point $p$,
$\widetilde{M}$ is called a \emph{uniform~visibility~manifold}. $M$ is called a
\emph{(uniform)~visibility~manifold} if its universal cover $\widetilde{M}$ does.
\end{Defi}

In contrast to Axiom $2$, the axiom of visibility insures that the existence of the connecting geodesic between two distinct points at infinity.


Visibility is a very natural condition. It is proved in \cite{BO} that the pinched negatively curved manifolds are uniform visibility.
On the other hand, there are many visibility manifolds admit some regions with zero and positive sectional curvature. For example,
a closed surface without conjugate points and genus greater than $1$ is a uniform visibility manifold.
For more information, see \cite{Eb1}, which contains almost all the useful geometric properties about visibility manifolds without conjugate points.

If $\widetilde{M}$ is an $n$-dimensional manifold without focal points, or a visibility manifold without conjugate points,
it is well known that (cf. \cite{OS,Eb1}) for any point $p\in\widetilde{M}$ and $\xi\in\widetilde{M}(\infty)$,
there exists a unique $v\in T^{1}_{p}\widetilde{M}$ such that
$c_{v}(+\infty)=\xi$, where $c_{v}$ is the unique geodesic satisfies $c(0)=p$ and $c'(0)=v$.
Therefore $\widetilde{M}(\infty)$ is homeomorphic to $T^{1}_{p}\widetilde{M}$,
which is homeomorphic to the $(n-1)$-dimensional unit sphere $\mathbb{S}^{n-1}$.

\begin{notation}\label{no1}
Given two different points $x, y\in\mathbf{\overline{\widetilde{M}}}\triangleq \widetilde{M}\cup\widetilde{M}(\infty)$
and let $\mathbf{c_{x,y}}$ be the unique geodesic connecting $x$ and $y$, if in addition $x\in \widetilde{M}$, we parametrize the geodesic $c_{x,y}$
by $c_{x,y}(0)=x$. We list the following notations of the angles between geodesics:
\begin{displaymath}
	\begin{aligned}
		\measuredangle_p(x,\bm{v}) & =    \measuredangle(c'_{p,x}(0),\bm{v}),\quad \bm{v}\in T_p^1\widetilde{M}, ~x\neq p; \\
        C(\bm{v},\epsilon)    & =         \Big\{x\in \overline{\widetilde{M}} - \{p\} ~\Big | ~\measuredangle_p(x,\bm{v})<\epsilon\Big\}, ~~\bm{v}\in T_p^1\widetilde{M};  \\                                                               C_\epsilon(\bm{v})    & =         \Big\{c_{\bm{w}}(+\infty)~\Big|~ \bm{w}\in T^1_p\widetilde{M},\measuredangle(\bm{v},\bm{w})<\epsilon\Big\}\subset\widetilde{M}(\infty), ~~\bm{v}\in T_p^1\widetilde{M};\\
		TC(\bm{v},\epsilon,r) & =    \Big\{x\in\overline{\widetilde{M}}~\Big| ~\measuredangle_p(x,\bm{v})<\epsilon, d(p,x)> r\Big\},~~\bm{v}\in T_p^1\widetilde{M}.
	\end{aligned}
\end{displaymath}
\end{notation}

The last one $TC(\bm{v},\epsilon,r)$ is called the $\mathbf{truncated~cone~with~axis~\bm{v}~and~angle~\epsilon}$.
For any point $\xi\in\widetilde{M}(\infty)$, the set of truncated cones containing this point actually forms local bases and hence, forms the bases for a topology $\tau$. This topology is unique and usually called the $\mathbf{cone~topology}$.
The cone topology is independent of the choice of the point p, according to the uniform visibility. Under the cone topology,
$\overline{\widetilde{M}}$ is homeomorphic to the n-dimensional unit closed ball in $\mathbb{R}^{n}$.
More precisely, for any $p\in \widetilde{M}$, let $B_{p}$ be the closed unit ball in the tangent space $T_{p}\widetilde{M}$, i.e.,
$B_{p} \triangleq \{ v \in T_{p}\widetilde{M} \mid \parallel v\parallel \leqslant 1\}$,
then if $M$ is a visibility manifold without conjugate points, the following map
\[
h:B_{p} \rightarrow \overline{\widetilde{M}},~~~ v \mapsto h(v) =
\left\{
\begin{aligned}
\exp_{p}(\frac{v}{1-\parallel v\parallel}) & \quad \text{if }  \parallel v \parallel < 1, \\
c_{v}(+\infty)~~~~~~~~~ & \quad \text{if } \parallel v \parallel = 1,
\end{aligned}
\right.
\]
is a homeomorphism under the cone topology.
For more details, refer to~\cite{Eb1,EO}.

Denote by $L(\Gamma)\triangleq\widetilde{M}(\infty)\cap\overline{\Gamma(p)}$,
where $\overline{\Gamma(p)}$ is the closure of the orbit of the $\Gamma$-ation at $p$ under the cone topology.
$L(\Gamma)$ is called the $\mathbf{limit~set}$ of $\Gamma$.
Due to the visibility axiom, one can check that $L(\Gamma)$ is independent of the choice of the point $p$.
$\Gamma$ is called $\mathbf{non}$-$\mathbf{elementary}$ if $\#L(\Gamma)=\infty$. In this paper, we always assume that $\Gamma$ is non-elementary.

Two (not necessarily distinct) points $\xi, \eta \in \widetilde{M}(\infty)$ are called $\mathbf{\Gamma}$-$\mathbf{dual}$,
if there exists a sequence $\{\alpha_{n}\}^{\infty}_{n=1}\subset \Gamma$ and a point $p \in \widetilde{M}$
(hence for all points $p \in \widetilde{M}$ due to visibility) such that under the cone topology,
$$\alpha^{-1}_{n}(p)\rightarrow \xi, ~~~\alpha_{n}(p)\rightarrow \eta.$$

We call an isometry $\alpha \in \mathrm{Iso}(\widetilde{M})$ an $\mathbf{axial}$ element
if there is a constant $T>0$ and a geodesic $c$ in $\widetilde{M}$ such that
$$ \alpha \circ c(t)=c(t+T),~~~t\in \mathbb{R}.$$
The geodesic $c$ is called an $\mathbf{axis}$ of the axial element $\alpha$.
An axial element $\alpha$ is called $\mathbf{rank~1}$ if the corresponding axis is rank $1$.

The following properties have been proved by Eberelin, O'Sullivan and us, which are useful later.

\begin{proposition}\label{pro20}
	Let $\widetilde{M}$ be a simply connected uniform visibility manifold without conjugate points,
or a rank $1$ simply connected manifold without focal points.
	\begin{enumerate}
   \item  The following map is continuous:
		\begin{displaymath}
			\begin{aligned}
				\Psi:~& T^1\widetilde{M}\times[-\infty,\infty] & \to     &~\widetilde{M}\cup\widetilde{M}(\infty), \\
				       & \qquad(\bm{v},t)                       & \mapsto &~c_{\bm{v}}(t).
			\end{aligned}
		\end{displaymath}
    \item  For any point $p \in \widetilde{M}$ and $v,w \in T_{p}^{1}\widetilde{M}$, if $v\neq w$,
                    then $$\lim_{t\rightarrow +\infty}d(c_{v}(t),c_{w}(t))=+\infty.$$
    \item  Any two points in $L(\Gamma)$ are $\Gamma$-dual.
    \item  Let $\alpha,~\beta$ be two elements in $\Gamma$.
Suppose that $\alpha$ is an axial element, and geodesic $c$ is an axis of $\alpha$, if $c(+\infty)$ is a fixed point of $\beta$,
then there exists a non-zero integer $n$ such that $\beta \circ \alpha^{n} = \alpha^{n} \circ \beta$,
and $c(-\infty)$ is another fixed point of $\beta$.
    \item If $\Gamma$ is non-elementary and cocompact, i.e., $\widetilde{M}/\Gamma$ is compact, then the $\Gamma$-action on the ideal boundary is minimal,
    i.e., for any $\xi \in \widetilde{M}(\infty)$, $\overline{\Gamma(\xi)}=\widetilde{M}(\infty)$.
    \item For any $\bm{v}\in T^1{\widetilde{M}}$ and positive constants $R,\epsilon$,
        there is a constant $L=L(\epsilon, R)$, such that for any $t>L$,
        \begin{displaymath}
            B(c_{\bm{v}}(t),R)\subset C(\bm{v},\epsilon).
        \end{displaymath}
        Here $B(c_{\bm{v}}(t),R)$ is the open ball centered at $c_{\bm{v}}(t)$ with radius $R$.
	\end{enumerate}
\end{proposition}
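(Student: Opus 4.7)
My plan is to verify each of the six items by invoking the geometric machinery built up by Eberlein and O'Sullivan for uniform visibility manifolds without conjugate points, and by adapting parallel arguments via the flat strip theorem for rank $1$ manifolds without focal points. In both settings the crucial preliminary is the unique asymptote property recalled just before the statement: for every $p\in\widetilde{M}$ and $\xi\in\widetilde{M}(\infty)$ there is a unique $v\in T_p^1\widetilde{M}$ with $c_v(+\infty)=\xi$, which identifies $\widetilde{M}(\infty)$ with the unit sphere at $p$ and allows one to translate statements at infinity into statements about unit vectors and the angles between them.

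For (1), continuity of $\Psi$ on $T^1\widetilde{M}\times\mathbb{R}$ follows from smoothness of the exponential map and the absence of conjugate points; continuity at $t=\pm\infty$ is the content of the cone-topology compactification, which I would invoke from Eberlein--O'Sullivan in the visibility case and from the analogous no-focal-points result in the rank $1$ case. For (2), I would combine the convexity of $t\mapsto d(c_v(t),c_w(t))$ (valid under no focal points) with the observation that, since $c_v$ and $c_w$ share the footpoint $p$ but $v\neq w$, they cannot bound a flat strip and hence the distance cannot stay bounded; in the uniform visibility case without conjugate points, the divergence is a direct consequence of the visibility axiom applied to the angle subtended at $p$. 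Item (6) is essentially the geometric content of visibility: a ball of radius $R$ around $c_v(t)$ subtends angle at $p$ tending to zero as $t\to\infty$, so for $t$ larger than an explicit threshold $L(\epsilon,R)$ produced by the uniform visibility constant the whole ball lies inside the cone $C(v,\epsilon)$.

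For (3), I would follow Eberlein's duality argument: given $\xi,\eta\in L(\Gamma)$, pick sequences $\{\alpha_n\},\{\beta_n\}\subset\Gamma$ with $\alpha_n(p)\to\xi$ and $\beta_n(p)\to\eta$, and use uniform visibility together with a diagonal extraction applied to $\alpha_n^{-1}\beta_n$ (and appropriate subsequences) to produce a single sequence $\{\gamma_n\}$ realising $\gamma_n^{-1}(p)\to\xi$ and $\gamma_n(p)\to\eta$. For (4), the hypothesis that $\beta$ fixes $c(+\infty)$ forces $\beta\circ c$ and $c$ to be positively asymptotic; the flat strip theorem (available both for manifolds without focal points and, via Eberlein, for uniform visibility manifolds without conjugate points applied along an axial element) shows that $\beta\circ c$ is a reparametrization of a translate of $c$, and commensurability with powers of $\alpha$ yields both the commutation relation $\beta\alpha^n=\alpha^n\beta$ and the invariance of the second endpoint $c(-\infty)$. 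For (5), I would combine (3) with the cocompactness of $\Gamma$ to conclude $L(\Gamma)=\widetilde{M}(\infty)$, and then deduce minimality from duality, since every orbit $\Gamma(\xi)$ has to accumulate on every other point of $L(\Gamma)$.

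The main obstacle is reconciling the two distinct hypotheses (uniform visibility with Axiom~2 on the one hand, rank $1$ without focal points on the other), because the flat strip and unique-asymptote tools natural in the no-focal-points setting are replaced by angle and visibility estimates in the other setting. In practice I would run the arguments in parallel and cite the previously established results from \cite{Eb1,EO,OS,BBE,LLW} wherever the geometric mechanism is identical, pointing out only the places where the two cases require genuinely different reasoning.
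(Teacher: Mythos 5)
At the top level your strategy coincides with the paper's: the authors do not reprove any of the six items but split into the two cases and cite Eberlein \cite{Eb1} (items 1--3, visibility case), O'Sullivan \cite{OS} (item 2, no focal points), their own papers \cite{L} and \cite{LWW} (items 4--5 in the visibility case, items 1, 3, 5, 6 in the no--focal--points case, and item 4 by the method of \cite{L}), with item 6 in the visibility case read off from the definition. Your item 6 angle estimate and your visibility argument for item 2 (the connecting segment from $c_v(t)$ to $c_w(t)$ subtends angle $\measuredangle(v,w)$ at $p$, hence passes within $R_{p,\measuredangle(v,w)}$ of $p$, forcing $d(c_v(t),c_w(t))\geq 2t-2R$) are exactly the intended mechanisms there.

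However, several of your sketched mechanisms would not survive being carried out. First, you invoke the flat strip theorem ``via Eberlein, for uniform visibility manifolds without conjugate points'': the flat strip theorem is known to \emph{fail} for manifolds without conjugate points (Burns \cite{Bu}, cited in this very paper); in that case the uniqueness of connecting geodesics is an assumption (Axiom 2), not a theorem, and the paper's arguments are built on that hypothesis rather than on any flat strip statement. Second, in the no--focal--points case the distance $t\mapsto d(c_v(t),c_w(t))$ between rays from a common point is not convex --- convexity is a nonpositive-curvature fact; what is available is monotone divergence, and the full statement $d(c_v(t),c_w(t))\to+\infty$ is precisely O'Sullivan's theorem \cite{OS} that the paper cites, not something recovered from a flat-strip/boundedness dichotomy (bounded distance for $t\geq 0$ only gives positive asymptoticity, and the correct tool is then uniqueness of the asymptote from $p$, not a flat strip). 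Third, and most seriously, your item 4 argument has a gap: from ``$\beta$ fixes $c(+\infty)$'' you pass to ``$\beta\circ c$ is a reparametrized translate of $c$'', but positive asymptoticity alone does not yield this; knowing $\beta c$ is a translate of $c$ is essentially equivalent to the conclusion $\beta(c(-\infty))=c(-\infty)$ you are trying to prove. The argument the paper relies on (from \cite{L}) instead bounds the displacement of the conjugates $\alpha^{-n}\beta\alpha^{n}$ at a point of $c$, uses discreteness of $\Gamma$ to force $\alpha^{-n}\beta\alpha^{n}=\alpha^{-m}\beta\alpha^{m}$ for some $n\neq m$, deduces the commutation relation $\beta\alpha^{k}=\alpha^{k}\beta$, and only then concludes that $\beta$ preserves the (unique, by rank 1 or Axiom 2) axis and fixes $c(-\infty)$. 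Finally, note that your item 6 argument is purely a visibility argument and does not cover the rank 1 no--focal--points case, for which the paper cites \cite{LWW}.
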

\begin{proof}
According to the statement of this proposition, we divide the discussions into two cases.
\begin{itemize}
		\item Case I: $\widetilde{M}$ is a uniform visibility manifold without conjugate points. \\
         In this case, the items $1$, $2$ and $3$ are proved by Eberlein in \cite{Eb1};  the items $4$ and $5$ are proved by us in \cite{L};
         the item $6$ follows directly from the definition of uniform visibility.
		\item Case II: $\widetilde{M}$ is a rank $1$ manifold without focal points. \\
        In this case, the item $2$ is proved by O'Sullivan in \cite{OS}; the items $1$, $3$, $5$ and $6$ are proved by us in \cite{LWW};
        and the item $4$ can be proved by the same approach as we used in our previous paper \cite{L} for the case of manifolds without conjugate points.
	\end{itemize}
\end{proof}

The following result plays a key role in the research of the dynamics of the manifolds beyond non-positive curvature.

\begin{lemma}\label{lem6}
Let $\widetilde{M}$ be a simply connected manifold without focal points,
or be a simply connected uniform visibility manifold without conjugate points and satisfies Axiom $2$,
let $v\in T^{1}\widetilde{M}$ be a unit tangent vector (when $\widetilde{M}$ has no focal points, we require that $rank(v)=1$),
then for any $\epsilon > 0$, there are neighborhoods $U_{\epsilon}$ of $c_{v}(-\infty)$ and $V_{\epsilon}$ of $c_{v}(+\infty)$ such that for each pair $(\xi,\eta)\in U_{\epsilon}\times V_{\epsilon}$, there exists a unique
(rank $1$) connecting geodesic $c=c_{\xi,\eta}$ with $d(c_{v}(0),c_{\xi,\eta})<\epsilon$.
\end{lemma}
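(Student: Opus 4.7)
The overall plan is a compactness/contradiction argument combining the continuity of the endpoint map (Proposition~\ref{pro20}, item~1) with a uniqueness statement for the reference geodesic $c_v$. I would first observe that $c_v$ is the \emph{only} geodesic connecting $c_v(-\infty)$ to $c_v(+\infty)$: in case (b) this is just Axiom~2, and in case (a) any second such geodesic would be positively and negatively asymptotic to $c_v$, so by the flat strip theorem for manifolds without focal points it would bound a flat strip with $c_v$; this would produce a nonzero parallel perpendicular Jacobi field along $c_v$, contradicting $\text{rank}(v)=1$.

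Next I would establish existence of a connecting geodesic for pairs $(\xi,\eta)$ near $(c_v(-\infty),c_v(+\infty))$. In case (b), uniform visibility gives this for every distinct pair of ideal points and there is nothing to do. In case (a), I would realize $c_{\xi,\eta}$ as a limit of geodesic segments: pick $x_k\to\xi$, $y_k\to\eta$ along the unique geodesic rays from $p=c_v(0)$, take the unique geodesic segment $\sigma_k$ from $x_k$ to $y_k$, and extract a subsequential limit. Continuity of $\Psi$ together with the cone topology ensures the limit is a complete geodesic with the prescribed ideal endpoints.

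The main step is the stability estimate $d(c_v(0),c_{\xi,\eta})<\epsilon$, which I would prove by contradiction. Suppose $\xi_n\to c_v(-\infty)$, $\eta_n\to c_v(+\infty)$, and $c_n$ is a connecting geodesic with $d(c_v(0),c_n)\ge \epsilon$. Reparametrize so that $c_n(0)$ is the point of $c_n$ nearest to $c_v(0)$ and set $v_n=c_n'(0)\in T^1\widetilde{M}$. The critical sub-claim is that the footpoints $\pi(v_n)$ stay inside a bounded set: if $\pi(v_n)\to\infty$, then cone-topology convergence $c_n(\pm\infty)\to c_v(\pm\infty)$ together with Proposition~\ref{pro20}(2) (uniform divergence of distinct rays from a common point) and the visibility/no-focal-points geometry would force $c_n$ to pass within distance less than $\epsilon$ of $c_v(0)$ for large $n$, contradicting the standing assumption. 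Granted boundedness, I pass to a subsequence $v_n\to w$ with $d(c_v(0),\pi(w))\ge\epsilon$; continuity of $\Psi$ gives $c_w(\pm\infty)=c_v(\pm\infty)$, contradicting the uniqueness from Step~1.

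Finally, for case~(a) I would verify that the connecting geodesic $c_{\xi,\eta}$ is rank~1. This follows because the set of rank~$1$ vectors in $T^1\widetilde{M}$ is open (upper semicontinuity of the dimension of the space of parallel Jacobi fields, via the flat strip theorem), so for $(\xi,\eta)$ close enough to the reference pair the tangent vectors to $c_{\xi,\eta}$ — which by Step~3 are close to tangent vectors of $c_v$ — remain rank~$1$. Uniqueness of $c_{\xi,\eta}$ in case (a) then follows exactly as in Step~1, since its rank~$1$ status rules out a parallel flat strip. I expect the technical heart to be the boundedness sub-claim in Step~3, where one must turn cone-topology convergence of the endpoints into uniform control on the geometry of the connecting geodesic family without invoking non-positive curvature.
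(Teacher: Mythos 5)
Your treatment of the visibility/Axiom~2 case is essentially correct and runs parallel to the paper's: both proofs argue by contradiction and produce, as a limit of connecting geodesics $c_n=c_{\xi_n,\eta_n}$, a second geodesic joining $c_v(-\infty)$ to $c_v(+\infty)$, contradicting Axiom~2. You get the needed compactness directly from uniform visibility (if $d(p,c_n)\to\infty$ then $\measuredangle_p(\xi_n,\eta_n)\to 0$, while cone-topology convergence forces $\measuredangle_p(\xi_n,\eta_n)\to\pi$), whereas the paper obtains it by an intermediate-value selection inside a continuous family of interpolating geodesics $c_{n,s}$ whose footpoints lie at distance exactly $\epsilon_0$ from $p$; your closing observation that every point of the limit geodesic is at distance $\geq\epsilon_0$ from $p$, so the limit cannot coincide with $c_v$, is the right way to finish and is, if anything, cleaner at that step.

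The genuine gap is in the no-focal-points case, which the paper does not reprove but quotes as Proposition~7 of \cite{LWW}; that citation is precisely where the hard work sits, and your sketch does not replace it. Your Step~2 existence argument (connect $x_k\to\xi$ and $y_k\to\eta$ by segments $\sigma_k$ and ``extract a subsequential limit'') is not available on soft grounds: without a priori control that the $\sigma_k$ meet a fixed compact set, no subsequence converges, and such control genuinely fails for general manifolds without focal points --- in flat $\mathbb{R}^n$ there is no geodesic joining two non-antipodal ideal points, and the segments escape to infinity. The same difficulty reappears in your Step~3 ``boundedness sub-claim,'' where you invoke ``visibility/no-focal-points geometry'': there is no visibility hypothesis in this case, and absence of focal points alone does not force the connecting geodesics to stay near $c_v(0)$. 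Both steps require using $\mathrm{rank}(v)=1$ quantitatively (flat strips must be excluded uniformly for geodesics near $c_v$, not merely along $c_v$ itself, which is the content of the Ballmann-type argument carried out in \cite{LWW}); in your proposal the rank-one hypothesis enters only through the soft uniqueness statement of Step~1 and the openness of the rank-one set, and that is not enough to yield either existence or the bound $d(c_v(0),c_{\xi,\eta})<\epsilon$. A smaller point: in Step~4 the estimate $d(c_v(0),c_{\xi,\eta})<\epsilon$ does not by itself place the tangent vectors of $c_{\xi,\eta}$ near those of $c_v$ in $T^1\widetilde{M}$; you would need local $C^1$ convergence of the connecting geodesics to $c_v$, which again rests on the compactness you have not established.
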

\begin{proof}
According to the statement of this proposition, we divide the discussions into two cases.
\begin{itemize}
		\item Case I: $\widetilde{M}$ is a rank $1$ manifold without focal points. \\
        It is just the Proposition $7$ in our previous paper \cite{LWW}.
		\item Case II:  $\widetilde{M}$ is a uniform visibility manifold without conjugate points and satisfies Axiom $2$.\\
        In this case, the uniform visibility and Axiom $2$ insure that there is a unique connecting geodesic,
        so we only need to prove the inequality
        \begin{equation}\label{eq71.1}
		d(c_{v}(0),c_{\xi,\eta})<\epsilon.
       \end{equation}

        Suppose \eqref{eq71.1} is not true, then there is a positive real number $\epsilon_{0}$ and two sequences
        $\{\xi_{n}\}_{n\in \mathbb{N}^{+}}, ~\{\eta_{n}\}_{n\in \mathbb{N}^{+}}\subset \widetilde{M}(\infty)$ such that for all $n\in\mathbb{N}^+$,
        we have
        \begin{equation}\label{eq72.1}
		\xi_{n}\in TC(-v,\frac{1}{n},n), ~~\eta_{n}\in TC(v,\frac{1}{n},n),
       \end{equation}
       i.e.,
        $$
		\lim_{n\rightarrow +\infty}\xi_{n}=c_{v}(-\infty), ~~\lim_{n\rightarrow +\infty} \eta_{n}=c_{v}(+\infty),
        $$
and the connecting geodesic $c_{n}\triangleq c_{\xi_{n},\eta_{n}}$ satisfies
\begin{equation}\label{eq74}
		d(c_{v}(0),c_{n}) \geq \epsilon_{0}.
       \end{equation}

Take an arbitrary positive increasing number sequence $\{t_{n}\}_{n\in \mathbb{N}^{+}}$ such that $t_{n} > n$ for each $n\in \mathbb{N}^{+}$,
thus if we denote by $p_{n}\triangleq c_{v}(-t_{n})$, $q_{n}\triangleq c_{v}(t_{n})$, then $p_{n}\in TC(-v,\frac{1}{n},n),~q_{n}\in TC(v,\frac{1}{n},n)$.
By \eqref{eq72.1} we know that the geodesic $c_{n}$ will intersect the truncated cones $TC(-v,\frac{1}{n},n)$ and $TC(v,\frac{1}{n},n)$, respectively.

For every $n\in \mathbb{N}^+$, choose $p'_{n}\in c_{n}\cap TC(-v,\frac{1}{n},n)$ and $q'_{n}\in c_{n}\cap TC(v,\frac{1}{n},n)$.
Let $a_{n}:[0,1]\rightarrow \widetilde{M}$ be a smooth curve in $TC(-v,\frac{1}{n},n)$,
and connecting $p_{n}$ and $p'_{n}$ with $a_{n}(0)=p_{n}$ and $a_{n}(1)=p'_{n}$,
and $\theta_{s}\triangleq \measuredangle_{p}\Big(c_{v}(-\infty),a_{n}(s)\Big)$ is an increasing function of $s$, where $p=c_{v}(0)$.

\begin{figure}[hb]
		\centering
		\includegraphics[width=0.5\textwidth]{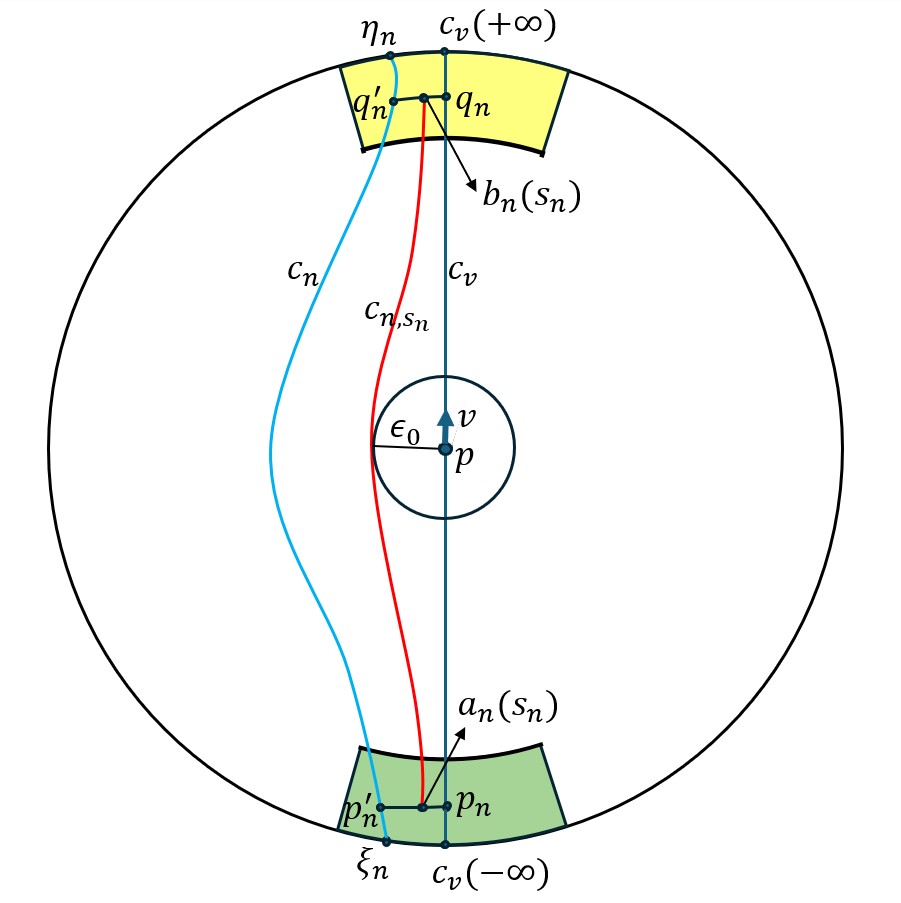}
\end{figure}

Similarly, let $b_{n}:[0,1]\rightarrow \widetilde{M}$ be a smooth curve in $TC(v,\frac{1}{n},n)$, and connecting $q_{n}$ and $q'_{n}$ with $b_{n}(0)=q_{n}$ and $b_{n}(1)=q'_{n}$, and $\vartheta_{s}\triangleq\measuredangle_{p}\Big(c_{v}(+\infty),b_{n}(s)\Big)$ is an increasing function of $s$.

Let $c_{n,s}$ ($s\in [0,1]$) be the connecting geodesic from $a_{n}(s)$ to $b_{n}(s)$,
with the parametrization $c_{n,0}(0)=c_{v}(0)$ and $c_{n,s}(0)$ is a continuous curve of $s$.

By the parametrization of $c_{n,0}$, $d(p,c_{n,0}(0))=d(p,p)=0$; and by \eqref{eq74}, $d(p,c_{n,1}(0))=d(p,c_{n}(0))\geq \epsilon_{0}$.
Therefore there exists a number $s_{n}\in (0,1]$ such that $d(p,c_{n,s_{n}}(0))=\epsilon_{0} >0$.

Passing to a subsequence if necessary we can assume that $\lim_{n\rightarrow +\infty}c'_{n,s_{n}}(0)= v_{\infty}\in T^{1}\widetilde{M}$.
Obviously $d(p,c_{v_{\infty}}(0))=\epsilon_{0}>0$, and $c_{v_{\infty}}(0) \notin c_{v}$.

Since $a_{n}(s_{n}) \in TC(-v,\frac{1}{n},n)$ and $b_{n}(s_{n}) \in TC(v,\frac{1}{n},n)$, by Proposition~\ref{pro20}(1), we know that
$$
c_{v_{\infty}}(-\infty)=\lim_{n\rightarrow+\infty}c_{n,s_{n}}(-\infty)=\lim_{n\rightarrow+\infty}a_{n}(s_{n})=c_{v}(-\infty),
$$
$$
c_{v_{\infty}}(+\infty)=\lim_{n\rightarrow+\infty}c_{n,s_{n}}(+\infty)=\lim_{n\rightarrow+\infty}b_{n}(s_{n})=c_{v}(+\infty),
$$
thus $c_{v_{\infty}}$ is a connecting geodesic from $c_{v}(-\infty)$ to $c_{v}(+\infty)$.

Because $c_{v_{\infty}}(0) \notin c_{v}$, the geodesics $c_{v}$ and $c_{v_{\infty}}$ are distinct,
which contradicts to the assumption that $\widetilde{M}$ satisfies Axiom $2$.
Thus  \eqref{eq71.1} is valid, and we complete the proof of this case.
\end{itemize}
\end{proof}

For any $\alpha_{1}, \alpha_{2} \in \Gamma$,  $\alpha_{1}$ and $\alpha_{2}$ are called $\mathbf{equivalent}$
if there exist $k_{1}, k_{2}\in \mathbb{Z} -\{0\}$ and $\beta \in \Gamma$ such that
$$\alpha^{k_{1}}_{1}=\beta \alpha^{k_{2}}_{2} \beta^{-1}.$$

If $\alpha_{1}$ and $\alpha_{2}$ are equivalent and $\alpha_{1}$ is an axial element with axis $c$,
then $\alpha_{2}$ is also an axial element with axis $\beta^{-1}(c)$. In fact
\begin{displaymath}
		\begin{aligned}
			\alpha^{k_{2}}_{2}(\beta^{-1}c(t)) & = \beta^{-1} \alpha^{k_{1}}_{1}\beta(\beta^{-1}c(t))    \\
			                                   & = \beta^{-1} \alpha^{k_{1}}_{1}(c(t))                    \\
			                                   & = \beta^{-1}c(t+k_{1}T).                                  \\
		\end{aligned}
\end{displaymath}
Therefore $\alpha_{2}(\beta^{-1}c(t))=\beta^{-1}c(t+\frac{k_{1}}{k_{2}}T)$.
Project through the canonical covering map from $\widetilde{M}$ to $M$,
the geodesics $c$ and $\beta^{-1}(c)$ in $\widetilde{M}$ corresponding to a same closed geodesic in $M$.
Thus the number of equivalence classes of $\Gamma$ provides a lower bound for the number of geometrically distinct closed geodesics in $M$.

\begin{theorem}\label{Ba}
(1) Let $\widetilde{M}$ be a complete Riemannian manifold without focal points, if there exists a rank $1$ axial isometry $\alpha$,
then either $\Gamma = \langle\alpha\rangle$ or $\Gamma$ has infinitely many equivalence classes of rank $1$ axial isometries.

(2) Let $\widetilde{M}$ be a simply connected uniform visibility manifold without conjugate points and satisfies Axiom $2$,
if there exists an axial isometry $\alpha$,
then either $\Gamma = \langle\alpha\rangle$ or $\Gamma$ has infinitely many equivalence classes of axial isometries.
\end{theorem}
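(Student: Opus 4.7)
The plan is to mimic Ballmann's ping-pong strategy from the non-positively curved setting, leveraging the north-south dynamics of a rank $1$ axial isometry on the ideal boundary together with Lemma~\ref{lem6}. Let $c$ be a rank $1$ axis of $\alpha$ with endpoints $\xi_{\pm}=c(\pm\infty)$, and let $T$ denote the translation length of $\alpha$. Assume $\Gamma\neq\langle\alpha\rangle$; the task reduces to exhibiting infinitely many inequivalent rank $1$ axial elements. First I would select $\beta\in\Gamma$ with $\{\beta(\xi_-),\beta(\xi_+)\}\cap\{\xi_-,\xi_+\}=\emptyset$. Indeed, by Proposition~\ref{pro20}(4) together with the flat-strip theorem in case I (respectively Axiom~$2$ in case II), any $\gamma\in\Gamma$ preserving $\{\xi_-,\xi_+\}$ must stabilize $c$ and act on it by translation; if every element of $\Gamma$ were such, then $\Gamma$ would be virtually cyclic, which is incompatible with the non-elementarity of $\Gamma$ and the minimal $\Gamma$-action on $\widetilde{M}(\infty)$ from Proposition~\ref{pro20}(5). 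Minimality then delivers the desired $\beta$.

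Next I would establish north-south dynamics for $\alpha$ on $\overline{\widetilde{M}}$: for any neighborhoods $U_{\pm}$ of $\xi_{\pm}$, there exists $N$ such that $\alpha^n(\overline{\widetilde{M}}\setminus U_-)\subset U_+$ for all $n\geq N$, and symmetrically for large negative $n$. This follows by applying Lemma~\ref{lem6} to a rank $1$ vector tangent to $c$: any connecting geodesic from a noncritical $\eta$ to $\xi_+$ tracks $c$ up to a bounded distance, and $\alpha^n$ pushes its endpoint deep into $U_+$; Proposition~\ref{pro20}(1) and (6) then supply the topological convergence. Setting $\gamma_n:=\alpha^n\beta$, one checks that for large $n$ the map $\gamma_n$ sends a small cone $V_+$ about $\xi_+$ strictly into itself, while $\gamma_n^{-1}=\beta^{-1}\alpha^{-n}$ does the same for a small cone $V_-$ about $\beta^{-1}(\xi_-)$. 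Since $\overline{\widetilde{M}}$ is homeomorphic to a closed Euclidean ball (as recalled in Section~\ref{geometry}), the closures of $V_{\pm}$ are topological disks, and Brouwer's fixed-point theorem produces fixed points $\eta_n^{\pm}\in V_{\pm}$ of $\gamma_n$.

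With the boundary fixed points in hand, Lemma~\ref{lem6} yields a connecting geodesic $c_n$ from $\eta_n^-$ to $\eta_n^+$ remaining in a fixed tube around $c$, and this geodesic is rank $1$ in case I. Uniqueness of the connecting geodesic (rank $1$ flat-strip rigidity in case I, or Axiom~$2$ in case II) forces $\gamma_n$ to preserve $c_n$ as an oriented geodesic; proper discontinuity of $\Gamma$ then guarantees this action is a nontrivial translation, so $\gamma_n$ is an axial element with axis $c_n$ (and $c_n$ is rank~$1$ in case I). A direct estimate using the north-south convergence gives translation length $\ell(\gamma_n)=nT+O(1)$. If $\gamma_{n_1}$ and $\gamma_{n_2}$ were equivalent, nonzero integers $k_1,k_2$ would satisfy $k_1\ell(\gamma_{n_1})=k_2\ell(\gamma_{n_2})$; an elementary counting argument then shows that each equivalence class in the family $\{\gamma_n\}_{n\in\mathbb{N}}$ contains only finitely many indices, producing infinitely many distinct equivalence classes.

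The principal obstacle is the construction step, namely bridging the boundary fixed-point data at $\eta_n^{\pm}$ to a genuine invariant geodesic and certifying rank $1$ in case I. Lemma~\ref{lem6} is designed exactly for this passage, but its use forces a careful coordination of scales: the cones $V_{\pm}$ produced by the north-south dynamics must be shrunk to fit inside the $U_{\epsilon},V_{\epsilon}$ neighborhoods supplied by Lemma~\ref{lem6} for a fixed rank $1$ vector, and the Brouwer fixed point must be shown to land in the admissible region. A secondary subtlety is the first step, where one must carefully invoke the flat-strip theorem (case I) or Axiom~$2$ (case II) together with proper discontinuity to argue that the $\Gamma$-stabilizer of $\{\xi_-,\xi_+\}$ is cyclic up to an order-two swap, and then that exhausting all of $\Gamma$ by this stabilizer contradicts non-elementarity via Proposition~\ref{pro20}(5).
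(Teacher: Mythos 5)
Your construction (ping-pong with $\gamma_n=\alpha^n\beta$ and Brouwer fixed points) differs from the paper's route, which instead uses $\Gamma$-duality of $c(-\infty)$ with an auxiliary limit point $\xi$ (Proposition~\ref{pro20}(3)) to produce the elements directly; that difference by itself would be fine, but as written your argument has two genuine gaps. First, in case I you invoke Lemma~\ref{lem6} to join the fixed points $\eta_n^-\in V_-$ and $\eta_n^+\in V_+$, but these lie near $\beta^{-1}(\xi_-)$ and $\xi_+$ respectively, and Lemma~\ref{lem6} only applies to pairs of boundary points close to the \emph{two endpoints of one and the same rank $1$ geodesic}. The points $\beta^{-1}(\xi_-)$ and $\xi_+$ are not the endpoints of $c$, and on a manifold without focal points two arbitrary boundary points need not be joined by any geodesic, let alone a rank $1$ one; so before Lemma~\ref{lem6} can be used you must first produce a rank $1$ geodesic joining (a neighborhood of) these two points. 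This is precisely the role of the auxiliary fact the paper cites (\cite{LWW}, Corollary~2): the rank $1$ endpoint $c(-\infty)$ can be connected by a unique rank $1$ geodesic to any other limit point $\xi$, and Lemma~\ref{lem6} is then applied around \emph{that} geodesic $c_\xi$, not around $c$. Your assertion that $c_n$ stays in "a fixed tube around $c$" is accordingly wrong, and without the auxiliary geodesic the existence and rank $1$ property of $c_n$ in case I are unjustified. (In case II visibility plus Axiom $2$ rescues existence and uniqueness, so this gap is specific to case I.)

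Second, the concluding counting argument fails. Equivalence in this paper means $\gamma_{n_1}^{k_1}=\beta\gamma_{n_2}^{k_2}\beta^{-1}$ for \emph{some} nonzero integers $k_1,k_2$, so conjugation-invariance of translation length only yields $|k_1|\,\ell(\gamma_{n_1})=|k_2|\,\ell(\gamma_{n_2})$, i.e.\ that the two translation lengths are rationally commensurable. The estimate $\ell(\gamma_n)=nT+O(1)$ cannot contradict commensurability, and a single equivalence class could a priori contain infinitely many of your $\gamma_n$; so "each class contains only finitely many indices" does not follow. The paper closes this step quite differently: Lemma~\ref{lem6} gives the quantitative bound $d(c_n(0),c_\xi(0))<\tfrac1n$ for the pairwise distinct axes $c_n$, and if only finitely many classes existed, infinitely many $c_n$ would be $\Gamma$-translates of a single axis $c_1$, producing pairwise distinct elements $\beta_n\in\Gamma$ with $d(\beta_n^{-1}(c_1(0)),c_\xi(0))<\tfrac1n$, contradicting the discreteness of $\Gamma$. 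Your proposal has no analogue of this accumulation-versus-discreteness argument, and without it (or some substitute) the infinitude of equivalence classes is not established. The preliminary step of finding $\beta$ with $\{\beta\xi_-,\beta\xi_+\}\cap\{\xi_-,\xi_+\}=\emptyset$ is also only sketched (note that Proposition~\ref{pro20}(4) shows an element fixing one endpoint fixes both, which is what you actually need to control), but that is a minor issue compared with the two gaps above.
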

\begin{proof}
(1) Let $c$ be an axis of the axial isometry $\alpha$, since $\alpha$ is rank $1$, the axes of $\alpha$ is unique.

If $\Gamma \neq \langle\alpha\rangle$, then $\{c(-\infty),c(+\infty)\}\varsubsetneqq L(\Gamma)$,
thus there exists $\xi \in L(\Gamma)-\{c(-\infty),c(+\infty)\}$. By the property of rank $1$ manifolds (cf. \cite{LWW}, Corollary 2),
there exists a unique rank $1$ geodesic from $c(-\infty)$ to $\xi$, and we denote it by $c_{\xi}$. For each $n\in \mathbb{N}^{+}$, let
\begin{displaymath}
		\begin{aligned}
			U_{n} & = TC \Big(-c^{'}_{\xi}(0),\frac{1}{n},n\Big)    \\
			      & = \Big\{p\in \overline{\widetilde{M}}-c_{\xi}(0) ~\Big| ~
                      \measuredangle_{c_{\xi}(0)}(p,c(-\infty))< \frac{1}{n}, ~d(p,c_{\xi}(0))>n\Big\},                    \\
            V_{n} & = TC \Big(c^{'}_{\xi}(0),\frac{1}{n},n\Big)    \\
			      & = \Big\{p\in \overline{\widetilde{M}}-c_{\xi}(0) ~\Big| ~
                      \measuredangle_{c_{\xi}(0)}(p,\xi)< \frac{1}{n},~d(p,c_{\xi}(0))>n\Big\}.                    \\
		\end{aligned}
\end{displaymath}

\begin{figure}[hb]
		\centering
		\includegraphics[width=0.5\textwidth]{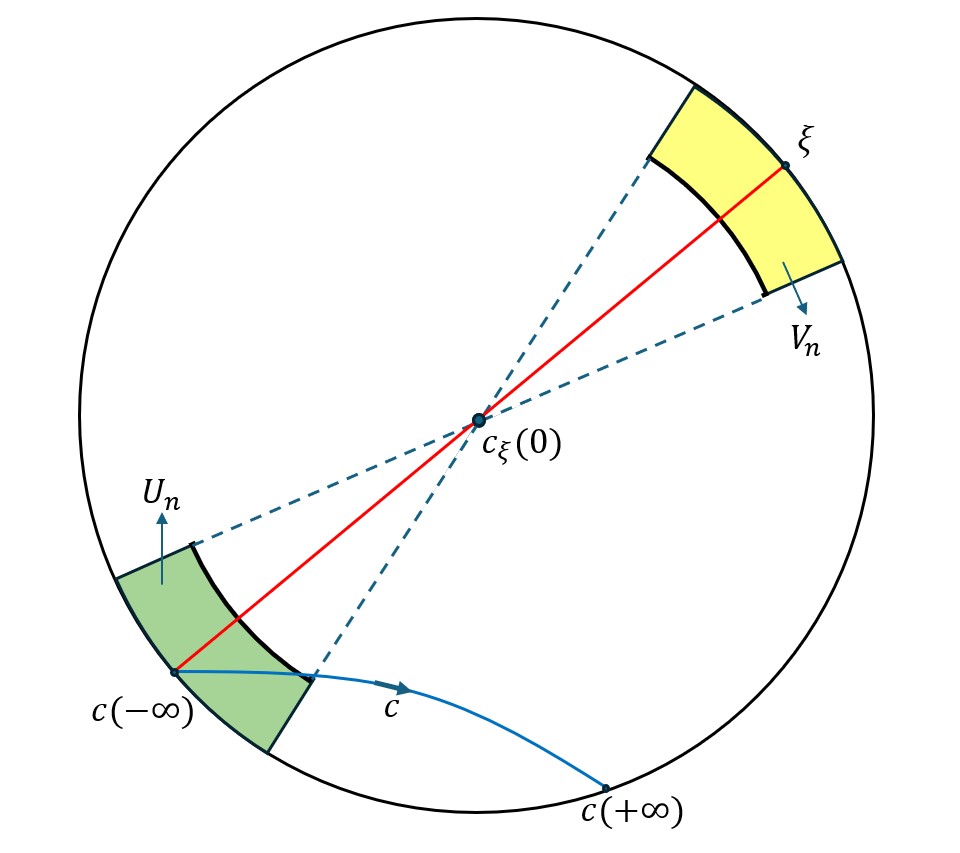}
\end{figure}

By Proposition~\ref{pro20}(3) we know that $c(-\infty)$ and $\xi$ are $\Gamma-$dual, thus for each $n\in \mathbb{N}^{+}$,
there exists $\alpha_{n}\in \Gamma$ such that
$$
\alpha_{n}(\overline{\widetilde{M}}-U_{n}) \subset V_{n},~~~\alpha^{-1}_{n}(\overline{\widetilde{M}}-V_{n}) \subset U_{n}.
$$
Since $\overline{V}_{n} \subset \overline{\widetilde{M}}-U_{n},~\overline{U}_{n} \subset \overline{\widetilde{M}}-V_{n}$, we have that
$$
\alpha_{n}(\overline{V}_{n}\cap\widetilde{M}(\infty)) \subset \overline{V}_{n}\cap\widetilde{M}(\infty),~~~
\alpha^{-1}_{n}(\overline{U}_{n}\cap\widetilde{M}(\infty)) \subset \overline{U}_{n}\cap\widetilde{M}(\infty).
$$

Since both of $\overline{U}_{n}\cap\widetilde{M}(\infty)$ and $\overline{V}_{n}\cap\widetilde{M}(\infty)$ are homeomorphic to closed $(\dim(M)-1)$-dimensional disks,
by the Brouwer fixed point theorem, $\alpha_{n}$ has a fixed point in $\overline{U}_{n}\cap\widetilde{M}(\infty)$ and $\overline{V}_{n}\cap\widetilde{M}(\infty)$, respectively.
And we denote them by $\xi_{n}$ and $\eta_{n}$, respectively. By Lemma~\ref{lem6}, for arbitrarily small $\epsilon >0$,
there exists an positive integer $N=N(\epsilon)>\frac{1}{\epsilon}$, for each $n\geqslant N$, there is a rank $1$ (thus unique) connecting geodesic $c_{n}\triangleq c_{\xi_{n},\eta_{n}}$,
and furthermore (we can shrink the set $U_{n}$ and $V_{n}$ if necessary),
\begin{equation}\label{eq65}
d(c_{n}(0),c_{\xi}(0))<\frac{1}{n}<\epsilon.
\end{equation}
Obviously the rank $1$ geodesic $c_{n}$ is the axis of the isometry $\alpha_{n}$.

By the definition, for $n$ large enough, $c(+\infty)\notin V_{n}$. Proposition~\ref{pro20} (4) implies that $\xi_{n}\neq c(-\infty)$,
on the other hand, $\lim_{n\rightarrow+\infty}\xi_{n}\rightarrow c(-\infty)$ indicates that we can assume that the elements of the sequence $\{\xi_{n}\}_{n\in \mathbb{N}^{+}}$
are pairwise distinct. Thus the geodesics $\{c_{n}\}_{n\in \mathbb{N}^{+}}$ are pairwise distinct.

If there are only finitely equivalence classes of rank $1$ axial isometries, passing to a subsequence if necessary, we can assume that for each $n\in \mathbb{N}^{+}$,
there exists $\beta_{n}\in \Gamma$ with $\beta_{n}(c_{n})=c_{1}$. Since the geodesics $\{c_{n}\}_{n\in \mathbb{N}^{+}}$ are pairwise distinct,
we know that $\{\beta_{n}\}_{n\in \mathbb{N}^{+}}$ are also pairwise distinct. Parametrized the geodesic $c_{n}$ by $\beta_{n}(c_{n}(0))=c_{1}(0)$.
\eqref{eq65} implies that
$$
d(\beta^{-1}_{n}(c_{1}(0)),c_{\xi}(0))<\frac{1}{n},~~~n\in \mathbb{N}^{+},
$$
which contradicts to the discreteness of the $\Gamma$. Thus $\Gamma$ has infinitely many equivalence classes of rank $1$ axial isometries.

(2) For the visibility manifolds without conjugate points, since all the required geometric properties have been established,
the method used in (1) can be directly applied to complete the proof.
\end{proof}

The following result says that the isometries act on $\overline{\widetilde{M}}$ as homeomorphisms for visibility manifolds without conjugate points,
which was proved for the non-positively curved visibility manifolds by P. Eberlein and B. O'Neill in ~\cite{EO}.

\begin{lemma}\label{lem161}
Let $\widetilde{M}$ be a simply connected uniform visibility manifold without conjugate points,
for any $\xi \in \widetilde{M}(\infty)$ and a sequence $\{p_{n}\}_{n\in \mathbb{N}^{+}}\subset \widetilde{M}$,
if $\lim_{n \rightarrow +\infty}p_{n}=\xi$ in the cone topology, then for each $\alpha \in \Gamma$,
we have $\lim_{n \rightarrow +\infty}\alpha(p_{n})=\alpha(\xi)$.
\end{lemma}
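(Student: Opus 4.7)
The plan is to reduce the statement to the trivial observation that isometries preserve distances and angles, and then to use the fact (already invoked earlier in Section~\ref{geometry}) that the cone topology on $\overline{\widetilde{M}}$ is independent of the base point chosen to define the truncated cones.

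First I would fix a base point $p\in\widetilde{M}$ and let $v\in T^{1}_{p}\widetilde{M}$ be the unique unit vector with $c_{v}(+\infty)=\xi$; existence and uniqueness of such a $v$ are guaranteed in the uniform visibility setting as recalled before Notation~\ref{no1}. Then the hypothesis $p_{n}\to\xi$ in the cone topology unpacks into
\[
\measuredangle_{p}(p_{n},v)\longrightarrow 0\quad\text{and}\quad d(p,p_{n})\longrightarrow +\infty,
\]
since the family $\{TC(v,\epsilon,r)\}_{\epsilon>0,\,r>0}$ is a neighborhood basis of $\xi$ at the base point~$p$.

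Next I would transport this information along $\alpha$. Set $q:=\alpha(p)$ and $w:=d\alpha_{p}(v)\in T^{1}_{q}\widetilde{M}$. Since $\alpha$ is an isometry, it carries the geodesic $c_{v}$ to the geodesic $c_{w}$; in particular $c_{w}$ is positively asymptotic to $\alpha\circ c_{v}$, so $c_{w}(+\infty)=\alpha(\xi)$. Using only that $\alpha$ is an isometry, distances and angles are preserved, so
\[
d(q,\alpha(p_{n}))=d(p,p_{n})\longrightarrow +\infty,\qquad
\measuredangle_{q}(\alpha(p_{n}),w)=\measuredangle_{p}(p_{n},v)\longrightarrow 0.
\]
This says precisely that $\alpha(p_{n})$ eventually enters every truncated cone $TC(w,\epsilon,r)$ based at $q$, i.e.\ $\alpha(p_{n})\to\alpha(\xi)$ in the cone topology defined at the base point~$q$.

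To finish, I would invoke the remark recorded right after Notation~\ref{no1}: under uniform visibility, the cone topology on $\overline{\widetilde{M}}$ does not depend on the choice of the base point used to form the truncated cones. Consequently convergence at $q$ is the same as convergence at $p$, and we conclude $\alpha(p_{n})\to\alpha(\xi)$ as required. The only potentially subtle point is this last base-point independence; once it is taken as granted from Eberlein's work \cite{Eb1,EO}, the rest of the argument is a direct unpacking of the isometric invariance of distance and angle.
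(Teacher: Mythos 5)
Your argument is correct, but it follows a genuinely different route from the paper. You reduce the lemma to two ingredients: (i) the elementary fact that an isometry preserves distances and angles, hence carries the truncated-cone neighborhood basis at $p$ (with axis $v$, $c_v(+\infty)=\xi$) to the truncated-cone basis at $q=\alpha(p)$ (with axis $w=d\alpha_p(v)$, $c_w(+\infty)=\alpha(\xi)$); and (ii) the base-point independence of the cone topology for uniform visibility manifolds, which you quote as a black box from the remark after Notation~\ref{no1} (Eberlein \cite{Eb1,EO}). The paper does not invoke (ii); instead it gives a self-contained argument whose engine is the divergence property and uniform visibility: it first proves, by contradiction using Proposition~\ref{pro20}(1)--(2), that $d(p,c_{p_n,\xi})\to+\infty$, then transfers this by the isometry ($d(\alpha(p),c_{\alpha(p_n),\alpha(\xi)})=d(p,c_{p_n,\xi})$) and a triangle inequality to get $d(p,c_{\alpha(p_n),\alpha(\xi)})\to+\infty$, and finally uses the uniform visibility axiom at the \emph{original} base point $p$ to conclude $\measuredangle_p(\alpha(p_n),\alpha(\xi))\to 0$, hence convergence in the cone topology at $p$. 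In effect, the paper re-proves exactly the instance of base-point independence that your proof cites wholesale. Your approach is shorter and more modular, and it is legitimate within the paper's framework since the base-point independence is explicitly asserted there; the paper's approach buys self-containedness and makes the geometric mechanism (escape of the connecting geodesics plus visibility) explicit, which is in keeping with its policy of re-deriving such facts in the without-conjugate-points setting. The only point to make explicit in your write-up is that $p_n\to\xi$ forces $d(p,p_n)\to+\infty$ and $\measuredangle_p(p_n,v)\to 0$ simultaneously (which you do state), and that $\alpha(\xi)$ is by definition the class of $\alpha\circ c_v=c_w$, so no separate boundary-extension argument is needed.
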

\begin{proof}
Fix a point $p\in \widetilde{M}$. First, we will show that
\begin{equation}\label{eq60}
		d(p, c_{p_{n},\xi})\rightarrow +\infty,~~~n\rightarrow\infty,
\end{equation}
where $ c_{p_{n},\xi}$ is the connecting geodesic ray from $p_{n}$ to $\xi$.

If~\eqref{eq60} is not true, by taking a sub-sequence if necessary, we can assume that there is a constant $R>0$, such that for each $n\in \mathbb{N}$,
there exists a $t_{n}>0$ satisfying
\begin{equation}\label{eq61}
		d(p, c_{p_{n},\xi}(t_{n})) \leqslant R, ~~~n\rightarrow\infty.
\end{equation}
Since $\lim_{n \rightarrow +\infty}p_{n}=\xi$, by the definition of the cone topology, we know that
\begin{equation}\label{eq62}
		t_{n}\rightarrow +\infty, ~~~n\rightarrow\infty.
\end{equation}

For each $n\in \mathbb{N}$, let $q_{n}\triangleq c_{p_{n},\xi}(t_{n})$. By \eqref{eq61},
we can assume that $q_{n}\rightarrow q \in \overline{B(p;R)}$ (by choosing a sub-sequence if needed).
Denote by $v_{n}\triangleq c^{'}_{p_{n},\xi}(t_{n})$ $(n \in \mathbb{N})$. As we mentioned before, all the geodesics in this paper are assumed to be unit speed,
thus the $\lim_{n\rightarrow +\infty}v_{n}$ exists without loss of generality, we denote this limit by $v$, i.e., $\lim_{n\rightarrow +\infty}v_{n}\triangleq v$.

By Proposition~\ref{pro20}(1) and \eqref{eq62},
$$
c_{v}(+\infty)=\lim_{n\rightarrow +\infty}c_{v_{n}}(+\infty)=\xi,
$$
$$
c_{-v}(+\infty)=\lim_{n\rightarrow +\infty}c_{-v}(t_{n})= \lim_{n\rightarrow +\infty}p_{n}=\xi.
$$
This contradicts to the divergence property  (Proposition~\ref{pro20}(2)).

Thus \eqref{eq60} is valid.

Since $\Gamma$ is a discrete subgroup of isometry group, for each $\alpha \in \Gamma$,
\begin{displaymath}
		\begin{aligned}
			  d(\alpha(p), c_{\alpha(p_{n}),\alpha(\xi)})& = d(\alpha(p), \alpha(c_{p_{n},\xi}))                              \\
			                                             & = d(p, c_{p_{n},\xi}) \rightarrow +\infty,~~~n\rightarrow\infty.    \\
		\end{aligned}
\end{displaymath}

Thus by triangle inequality we have
\begin{equation}\label{eq63}
d(p, c_{\alpha(p_{n}),\alpha(\xi)}) \geqslant d(\alpha(p), c_{\alpha(p_{n}),\alpha(\xi})) - d(p,\alpha(p))\rightarrow +\infty,~~~n\rightarrow\infty.
\end{equation} 	
Furthermore, \eqref{eq63} and the uniform visibility implies	
\begin{equation}\label{eq64}
\measuredangle_p(\alpha(p_{n}),\alpha(\xi))\rightarrow 0, ~~n\rightarrow\infty.
\end{equation}

By the definition of the cone topology, \eqref{eq63} and \eqref{eq64} show that $\lim_{n \rightarrow +\infty}\alpha(p_{n})=\alpha(\xi)$.
\end{proof}

\begin{proposition}\label{pr75}
(1) Let $\widetilde{M}$ be a simply connected manifold without focal points,
and $c$ is a rank $1$ geodesic axis of $\alpha \in \text{Iso}(X)$.
For all neighborhood $U \subset \overline{\widetilde{M}}$ of $c(-\infty)$ and $V \subset \overline{\widetilde{M}}$ of $c(+\infty)$,
there is $N=N(U,V) \in \mathbb{N}^+$ such that
$$\alpha^{n}(\overline{\widetilde{M}}-U)\subset V, ~~\alpha^{-n}(\overline{\widetilde{M}}-V)\subset U$$
for all $n \geq N$.

(2) Let $\widetilde{M}$ be a simply connected uniform visibility manifold without conjugate points, and $c$ is a geodesic axis of $\alpha \in \text{Iso}(X)$.
For all neighborhood $U \subset \overline{\widetilde{M}}$ of $c(-\infty)$ and $V \subset \overline{\widetilde{M}}$ of $c(+\infty)$,
there is $N=N(U,V) \in \mathbb{N}^+$ such that
$$\alpha^{n}(\overline{\widetilde{M}}-U)\subset V, ~~\alpha^{-n}(\overline{\widetilde{M}}-V)\subset U$$
for all $n \geq N$.
\end{proposition}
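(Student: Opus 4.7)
The plan is to prove this ``North--South'' dynamics statement by contradiction and compactness, using the visibility-type estimate Proposition~\ref{pro20}(6) applied to the axis $c$ and its reverse. Fix $p = c(0)$, $v = c'(0)$, and let $T > 0$ denote the translation length of $\alpha$, so that $\alpha^n(p) = c(nT)$. I will prove the first inclusion $\alpha^n(\overline{\widetilde M} \setminus U) \subset V$; the second then follows at once by applying it to $\alpha^{-1}$, whose axis is $c$ traversed in the reverse direction (so that $c(+\infty)$ and $c(-\infty)$ swap roles).

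Suppose the inclusion fails. Then there exist $n_k \to \infty$ and $x_k \in \overline{\widetilde M} \setminus U$ with $y_k := \alpha^{n_k}(x_k) \notin V$; by cone-topology compactness of $\overline{\widetilde M} \setminus U$ and $\overline{\widetilde M} \setminus V$, I may pass to subsequences so that $x_k \to x \neq c(-\infty)$ and $y_k \to y \neq c(+\infty)$. If $x \in \widetilde M$, then the isometry identity $d(y_k, c(n_k T)) = d(x_k, p)$ places $y_k$ inside a ball $B(c(n_k T), R)$ of fixed radius $R$; Proposition~\ref{pro20}(6) then puts this ball inside $C(v, \epsilon)$ for any prescribed $\epsilon > 0$ and all large $n_k$, while $d(p, y_k) \ge n_k T - R \to \infty$, so $y_k$ eventually enters any truncated cone $TC(v, \epsilon, R_0) \subset V$, a contradiction. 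The case $y \in \widetilde M$ is handled symmetrically, using $\alpha^{-1}$ and the reversed axis.

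The main case is $x, y \in \widetilde M(\infty)$. Let $\sigma_k = c_{p, x_k}$ be the geodesic ray from $p$ to $x_k$, with initial direction $w_k := \sigma_k'(0)$. By continuity of $\Psi$ (Proposition~\ref{pro20}(1)) and uniqueness of rays at $p$ asymptotic to a prescribed boundary point, $w_k \to w_\infty := c'_{p,x}(0)$; since $x \neq c(-\infty)$, $w_\infty \neq -v$, so $\measuredangle(w_k, -v) \ge 2\delta$ for some $\delta > 0$ and all large $k$, and each $\sigma_k$ is disjoint from the cone $C(-v, \delta)$ at $p$. Applying Proposition~\ref{pro20}(6) to the reversed axis $c_{-v}(t) = c(-t)$, for each $R > 0$ the ball $B(c(-n_k T), R)$ sits inside $C(-v, \delta)$ for all sufficiently large $n_k$, hence is disjoint from $\sigma_k$, giving $d(c(-n_k T), \sigma_k) > R$; letting $R$ vary, $d(c(-n_k T), \sigma_k) \to \infty$. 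Because $\alpha^{n_k}$ is an isometry sending $c(-n_k T)$ to $p$, this translates into $d(p, \alpha^{n_k}(\sigma_k)) \to \infty$, where $\alpha^{n_k}(\sigma_k)$ is the geodesic ray from $c(n_k T)$ to $y_k$. In case~(2) uniform visibility immediately yields $\measuredangle_p(c(n_k T), y_k) \to 0$, which combined with $\measuredangle_p(c(n_k T), c(+\infty)) \to 0$ and the triangle inequality for angles at $p$ forces $y_k \to c(+\infty)$ in the cone topology, contradicting $y \neq c(+\infty)$. In case~(1) the same conclusion is obtained by combining Lemma~\ref{lem6} with the refined visibility-type estimates along the rank 1 axis developed in~\cite{LWW}.

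The hard part is the final step in the main case: passing from ``the ray $\alpha^{n_k}(\sigma_k)$ is arbitrarily far from $p$'' to ``its ideal endpoint $y_k$ subtends a vanishingly small angle at $p$.'' In case~(2) this is exactly the content of uniform visibility, but in case~(1) uniform visibility is not assumed and one must exploit the rank 1 property of the axis $c$ via Lemma~\ref{lem6} and the dynamical results of~\cite{LWW} to recover the angle estimate. The rest of the argument is routine bookkeeping with the cone topology and the truncated-cone neighborhood basis at $c(\pm\infty)$.
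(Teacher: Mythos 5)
Your part (2) argument is essentially sound and in fact more detailed than the paper's: the paper disposes of (2) in two lines by combining Proposition~\ref{pro20}(1), Proposition~\ref{pro20}(6) and Lemma~\ref{lem161}, whereas you run an explicit compactness/contradiction argument in which uniform visibility converts ``the geodesic from $c(n_kT)$ to $y_k$ stays far from $p$'' into ``$\measuredangle_p(c(n_kT),y_k)\to 0$.'' That step is legitimate in the visibility setting, and your reduction of the second inclusion to the first via $\alpha^{-1}$ is fine (modulo routine points you leave implicit: take $U,V$ open so the complements are compact, and note $d(p,y_k)\to\infty$ so that small angle really gives cone-topology convergence).

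The genuine gap is part (1). There you reduce, exactly as in (2), to the implication ``$d\big(p,\,c_{c(n_kT),y_k}\big)\to\infty \Rightarrow \measuredangle_p(c(n_kT),y_k)\to 0$,'' and then assert that this ``is obtained by combining Lemma~\ref{lem6} with the refined visibility-type estimates along the rank $1$ axis developed in \cite{LWW}.'' This is precisely the step that fails in general for manifolds without focal points: absent visibility, a connecting geodesic can be arbitrarily far from $p$ while still subtending a definite angle at $p$ (flat half-planes and strips are the obstruction), so the conclusion must be extracted from the rank $1$ hypothesis on the axis $c$ by a specific mechanism --- e.g.\ an angle-comparison estimate of the type of Lemma 2.9 in \cite{Wa}, or a strip/band argument showing a rank $1$ geodesic cannot bound a flat half-strip --- none of which you identify or carry out; Lemma~\ref{lem6} by itself only gives local control of connecting geodesics near the axis, not the global north--south contraction. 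The paper does not attempt this either: its proof of (1) is a direct citation of Watkins \cite{Wa}, where this north--south dynamics for rank $1$ axial isometries on manifolds without focal points is proved. So as written your proposal proves (2) but leaves the substantive content of (1) unproved.
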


\begin{proof}
(1) This is proved by Watkins in \cite{Wa}.

(2) By Proposition~\ref{pro20}(6), we know that the conclusion holds for the point in $\widetilde{M}$.
Now it suffices to prove that the conclusion also holds for points in $\widetilde{M}(\infty)$,
this follows directly from Proposition~\ref{pro20}(1), Proposition~\ref{pro20}(6) and Lemma~\ref{lem161}.
\end{proof}

\begin{Defi}  \label{def10}
Let $\mu$ be a probability measure on $\Gamma$ such that the support of $\mu$ generates $\Gamma$ as a semi-group,
and $\nu$ be a probability measure on the ideal boundary $\widetilde{M}(\infty)$.
We define the \emph{convolution} of $\mu$ and $\nu$, denoted by $\mu\ast\nu$, as a probability measure on the ideal boundary by
$$\int_{\widetilde{M}(\infty)}f(\xi)d(\mu\ast\nu)(\xi)=\sum_{\alpha\in\Gamma}\Big(\int_{\widetilde{M}(\infty)}f(\alpha\xi)d\nu(\xi)\Big)\mu(\alpha),$$
where $f:\widetilde{M}(\infty) \rightarrow \mathbb{R}$ is a bounded measurable function on the ideal boundary.
\end{Defi}

In this paper, we are particularly interested in those probability measures on the ideal boundary $\widetilde{M}(\infty)$
that remain invariant under convolution with the probability measure $\mu$ on $\Gamma$.

We note that such measures always exist. In fact, by the weak compactness of the space of probability measures on the ideal boundary,
$\Big\{\frac{1}{n}\sum^{n-1}_{k=0}(\mu\ast)^{k}\nu\Big\}_{n\in \mathbb{N}^{+}}$ must contain a weakly convergent subsequence
whose weak limit is invariant under convolution. Thus from now on, we always assume that the measure $\nu$ satisfies
\begin{equation}\label{eq67}
\mu\ast\nu=\nu.
\end{equation}

\begin{lemma}\label{lem7}
Let $\mu$ and $\nu$ be probability measures as stated above and satisfies the equality~\eqref{eq67},
then for each $\xi\in \widetilde{M}(\infty)$, $\nu(\{\xi\})=0$.
\end{lemma}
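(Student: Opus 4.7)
The plan is a standard ``maximum mass" argument combined with the minimality of the $\Gamma$-action on the ideal boundary (Proposition~\ref{pro20}(5)). I would argue by contradiction: assume there exists $\xi \in \widetilde{M}(\infty)$ with $\nu(\{\xi\})>0$, and set
\[
m \;\triangleq\; \sup_{\eta \in \widetilde{M}(\infty)} \nu(\{\eta\}) \;>\; 0,
\qquad
A \;\triangleq\; \{\eta \in \widetilde{M}(\infty) : \nu(\{\eta\}) = m\}.
\]
Since $\nu$ is a probability measure, $A$ is a nonempty finite set with $|A| \le 1/m$. The goal is to show that $A$ is $\Gamma$-invariant, which contradicts minimality because $\widetilde{M}(\infty) \cong \mathbb{S}^{n-1}$ is infinite.

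The first main step is to use the stationarity \eqref{eq67}. Applied to the indicator function of $\{\eta_0\}$ for $\eta_0 \in A$, Definition~\ref{def10} gives
\[
m \;=\; \nu(\{\eta_0\}) \;=\; (\mu \ast \nu)(\{\eta_0\}) \;=\; \sum_{\alpha \in \Gamma} \mu(\alpha)\,\nu(\{\alpha^{-1}\eta_0\}).
\]
Since $\nu(\{\alpha^{-1}\eta_0\}) \le m$ for every $\alpha$ and $\sum_\alpha \mu(\alpha) = 1$, a convex combination argument forces $\nu(\{\alpha^{-1}\eta_0\}) = m$ for every $\alpha$ in the support of $\mu$. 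Hence $\alpha^{-1}(A) \subseteq A$ for every such $\alpha$. Because $A$ is finite and $\alpha^{-1}$ is injective (it is a homeomorphism of $\widetilde{M}(\infty)$, as provided by Lemma~\ref{lem161} in the visibility case, and by the analogous continuity of the isometric action in the rank~$1$ no-focal-points case), we actually have $\alpha^{-1}(A) = A$. Iterating and using the assumption that the support of $\mu$ generates $\Gamma$ as a semi-group, I conclude that $A$ is invariant under every element of $\Gamma$.

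The second main step is to close the argument using Proposition~\ref{pro20}(5): since $\Gamma$ is non-elementary and cocompact, the $\Gamma$-action on $\widetilde{M}(\infty)$ is minimal, so for any $\eta_0 \in A$ the orbit $\Gamma\cdot\eta_0$ is dense in $\widetilde{M}(\infty)$. But $\Gamma\cdot\eta_0 \subseteq A$ is finite, whereas $\widetilde{M}(\infty)$ is homeomorphic to $\mathbb{S}^{n-1}$ (with $n = \dim \widetilde{M} \ge 2$, since $\Gamma$ is assumed non-elementary) and is therefore infinite. This contradiction forces $m = 0$, which yields $\nu(\{\xi\}) = 0$ for every $\xi \in \widetilde{M}(\infty)$.

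The only genuinely delicate points are bookkeeping ones: verifying that isometries act as homeomorphisms of $\overline{\widetilde{M}}$ in both geometric settings (so that $A$ really is permuted by $\Gamma$), and passing from semigroup invariance to group invariance of the finite set $A$. Both are routine given the results already stated in Section~\ref{geometry}, so I do not expect any serious obstacle.
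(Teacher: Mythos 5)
Your proposal is correct, and its engine is the same as the paper's: assume a maximal atom exists, feed the indicator of that atom into the stationarity relation $\mu\ast\nu=\nu$, and use the convex-combination rigidity to conclude that every translate $\alpha^{-1}\xi_0$ with $\alpha$ in the support of $\mu$ (and then, by iteration over the semigroup generated by the support, every $\Gamma$-translate) is again an atom of maximal mass. Where you diverge is in how the contradiction is closed. The paper concludes directly that $\nu(\widetilde{M}(\infty))=+\infty$ because $\mathrm{card}(\Gamma)=+\infty$ and all the translates carry the same positive mass; this is slightly terse, since it tacitly needs the orbit $\{\alpha^{-1}\xi_0\}_{\alpha\in\Gamma}$ to consist of infinitely many \emph{distinct} points (an infinite group alone does not guarantee this). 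You instead package the maximal atoms into the finite set $A$, show $A$ is $\Gamma$-invariant, and contradict the minimality of the $\Gamma$-action on $\widetilde{M}(\infty)$ from Proposition~\ref{pro20}(5). Your route buys a cleaner justification of the ``infinitely many distinct translates'' issue, at the cost of invoking an extra geometric input (minimality, hence non-elementarity and cocompactness of $\Gamma$, which are indeed standing assumptions in this section); the paper's route is more elementary in that it needs only that the orbit of a boundary point is infinite. Your side remarks are also fine: injectivity of $\alpha^{-1}$ on the boundary is immediate from the group action, and the passage from semigroup to group invariance of the finite set $A$ works exactly as you indicate.
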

\begin{proof}
Suppose that there exists an $\eta\in \widetilde{M}(\infty)$ with $\nu(\{\eta\})>0$, since $\nu$ is a probability measure, there exists
a $\xi_{0}\in \widetilde{M}(\infty)$ such that
$$
\nu(\{\xi_{0}\})=\sup_{\xi\in \widetilde{M}(\infty)}\nu(\{\xi\})>0.
$$
Thus for each $\alpha \in \Gamma$,
\begin{equation}\label{eq68}
\nu(\{\alpha^{-1}\xi_{0}\}) \leq \nu(\{\xi_{0}\}).
\end{equation}

By \eqref{eq67} and Definition~\ref{def10}, we have
\begin{displaymath}
		\begin{aligned}
			\nu(\{\xi_{0}\}) & = (\mu\ast\nu)(\{\xi_{0}\})                   \\
			                 & = \int_{\widetilde{M}(\infty)}\mathbf{1}_{\{\xi_{0}\}}(\eta)d(\mu\ast\nu)(\eta) \\
			                 & =  \sum_{\alpha\in\Gamma}\int_{\widetilde{M}(\infty)}\mathbf{1}_{\{\xi_{0}\}} (\alpha\eta)d\nu(\eta)d\mu(\alpha)    \\
                             & =   \sum_{\alpha \in \Gamma}\nu(\{\alpha^{-1}\xi_{0}\})\mu(\alpha).           \\
		\end{aligned}
\end{displaymath}
The above equality, and \eqref{eq68}, and the fact that ``$\mu$ is a probability measure on $\Gamma$" imply that
$$\nu(\{\alpha^{-1}\xi_{0}\}) = \nu(\{\xi_{0}\}),~~~~~~\alpha\in\Gamma.$$
This indicates that $\nu(\widetilde{M}(\infty))=+\infty$ since $\mathrm{card}(\Gamma)=+\infty$,
which contradicts to the fact that $\nu$ is a probability measure.
\end{proof}

We can get the following consequence directly from Lemma~\ref{lem7}.

\begin{corollary}\label{cor7}
Let $\mu$ and $\nu$ be probability measures as stated above and satisfies the equality~\eqref{eq67},
then for any $\epsilon>0$ and for each $\xi\in \widetilde{M}(\infty)$,
there exists a neighbourhood $U\subset\widetilde{M}(\infty)$ of $\xi$ such that $\nu(U)<\epsilon$.
\end{corollary}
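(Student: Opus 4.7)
The plan is to deduce Corollary~\ref{cor7} from Lemma~\ref{lem7} by applying continuity of the finite measure $\nu$ from above along a countable decreasing neighborhood basis of $\xi$ whose intersection inside $\widetilde{M}(\infty)$ is exactly $\{\xi\}$. The cone topology and the homeomorphism $T^1_p\widetilde{M}\cong\widetilde{M}(\infty)$ recalled in Section~\ref{geometry} will provide such a basis essentially for free.

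First, I would fix a base point $p\in\widetilde{M}$ and let $v\in T^1_p\widetilde{M}$ be the unique unit tangent vector with $c_v(+\infty)=\xi$; this exists and is unique because, under the hypotheses on $\widetilde{M}$, the map $w\mapsto c_w(+\infty)$ is a bijection between $T^1_p\widetilde{M}$ and $\widetilde{M}(\infty)$. Then, for each $n\in\mathbb{N}^{+}$, I would set
\[
U_n \;\triangleq\; TC(v,\tfrac{1}{n},n)\cap\widetilde{M}(\infty) \;=\; C_{1/n}(v),
\]
using Notation~\ref{no1}. Each $U_n$ is an open neighborhood of $\xi$ in the cone topology, and the sequence is nested: $U_{n+1}\subset U_n$. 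Since the truncated cones form a local base at $\xi$, the $U_n$ form a neighborhood basis at $\xi$ inside $\widetilde{M}(\infty)$.

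Next, I would verify that $\bigcap_{n\in\mathbb{N}^{+}} U_n=\{\xi\}$. Indeed, if $\eta\in\bigcap_n U_n$, then the unique $w\in T^1_p\widetilde{M}$ with $c_w(+\infty)=\eta$ satisfies $\measuredangle(v,w)<1/n$ for every $n$, hence $w=v$ and $\eta=\xi$. Because $\nu$ is a probability measure on $\widetilde{M}(\infty)$, continuity of measure from above applies, yielding
\[
\lim_{n\to\infty}\nu(U_n)=\nu\Bigl(\bigcap_{n\in\mathbb{N}^{+}}U_n\Bigr)=\nu(\{\xi\})=0,
\]
where the final equality is Lemma~\ref{lem7}. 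Given any $\epsilon>0$, choosing $n$ sufficiently large provides a neighborhood $U=U_n$ of $\xi$ with $\nu(U)<\epsilon$, as required.

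I do not anticipate a serious obstacle: the statement is a direct combination of the non-atomic property already established in Lemma~\ref{lem7} with first-countability of $\widetilde{M}(\infty)$ at $\xi$ in the cone topology. The only point requiring a line of justification is the identification $\bigcap_n U_n=\{\xi\}$, which follows immediately from the bijection between $T^1_p\widetilde{M}$ and $\widetilde{M}(\infty)$ recalled in Section~\ref{geometry}.
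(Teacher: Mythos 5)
Your argument is correct and is exactly the routine verification the paper has in mind: the paper states Corollary~\ref{cor7} as a direct consequence of Lemma~\ref{lem7}, and filling that in via a nested basis of cone neighborhoods $C_{1/n}(v)$ shrinking to $\{\xi\}$ together with continuity of the probability measure $\nu$ from above is the intended route. Your identification $\bigcap_n U_n=\{\xi\}$ via the bijection $T^1_p\widetilde{M}\to\widetilde{M}(\infty)$ is justified by the facts recalled in Section~\ref{geometry}, so no gap remains.
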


\begin{proposition}\label{key}
Let $\widetilde{M}$ be a simply connected rank $1$ manifold without focal points,
or a simply connected uniform visibility manifold without conjugate points satisfying Axiom $2$,
and $\Gamma$ be a cocompact discrete subgroup of $Iso(\widetilde{M})$, i.e., $M \triangleq \widetilde{M}/\Gamma$ is compact,
assume that $\nu$ is a probability measure satisfies the equality~\eqref{eq67},
choose a point $p\in \widetilde{M}$, for a given $\epsilon > 0$, there exists a finite set $\Gamma_{0}$ in $\Gamma$ with the following property:

If $\{\alpha_{n}\}_{n\in \mathbb{N}^{+}} \subset \Gamma$ with $\alpha_{n}(p)\rightarrow \eta_{0}\in \widetilde{M}(\infty)$,
then there exists an open subset $U\subset \widetilde{M}(\infty)$ such that
\begin{itemize}
		\item $U \cap C\big(c'_{p,\eta_{0}}(0),\epsilon\big) = \emptyset$, where the cone $C\big(c'_{p,\eta_{0}}(0),\epsilon\big)$ is defined in the Notation~\ref{no1};
		\item $\nu(U)< \epsilon$.
	\end{itemize}
And there exists an element $\beta \in \Gamma_{0}$ and a subsequence $\{\alpha_{n_{k}}\}_{k\in \mathbb{N}^{+}} \subset \{\alpha_{n}\}_{n\in \mathbb{N}^{+}}$,
$$
\alpha_{n_{k}}\beta\big(\widetilde{M}(\infty)-U\big)\subset C\big(c'_{p,\eta_{0}}(0),\epsilon\big).
$$
\end{proposition}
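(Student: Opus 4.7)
The plan is to combine a \emph{convergence property} (a north--south type dynamics) for the $\Gamma$-action on $\widetilde{M}(\infty)$ with a covering argument based on the minimality of this action, the set $\Gamma_0$ being a finite family of isometries rich enough to move every boundary point outside every cone $C(c'_{p,\eta}(0), \epsilon)$ of half-angle $\epsilon$ at $p$.

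First, since $\alpha_n(p) \to \eta_0 \in \widetilde{M}(\infty)$ we have $d(p,\alpha_n(p)) \to +\infty$ and hence (applying the isometry $\alpha_n$) also $d(p,\alpha_n^{-1}(p)) \to +\infty$; compactness of $\overline{\widetilde{M}}$ in the cone topology then lets us pass to a subsequence, still denoted $\{\alpha_{n_k}\}$, with $\alpha_{n_k}^{-1}(p) \to \zeta_0$ for some $\zeta_0 \in \widetilde{M}(\infty)$. The key geometric step is the convergence property: for every closed $K \subset \widetilde{M}(\infty)$ avoiding $\zeta_0$, the sequence $\alpha_{n_k}$ maps $K$ uniformly into every prescribed cone neighborhood of $\eta_0$, provided $k$ is large. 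To prove it, given $\zeta \in K$ let $q_k$ be the point of $c_{p,\alpha_{n_k}(\zeta)}$ closest to $\alpha_{n_k}(p)$; isometric pullback yields $d(\alpha_{n_k}(p), q_k) = d(p, c_{\alpha_{n_k}^{-1}(p),\zeta})$, and Lemma~\ref{lem6} applied to a (rank $1$) vector $v$ with $c_v(-\infty) = \zeta_0$ and $c_v(+\infty) = \zeta$ (finitely many such $v$ suffice by compactness of $K$) yields a uniform upper bound on this quantity. Since $d(p,\alpha_{n_k}(p)) \to \infty$, it follows that $d(p,q_k) \to \infty$. If the angle $\measuredangle_p(\alpha_{n_k}(\zeta), \alpha_{n_k}(p))$ remained bounded below by some $\epsilon' > 0$ along a subsequence, the short geodesic segment from $q_k$ to $\alpha_{n_k}(p)$ would have endpoints subtending angle $\geq \epsilon'$ at $p$, which by the visibility axiom (Case II) or its rank $1$ analog from \cite{LWW} (Case I) would confine that segment to a bounded neighborhood of $p$, contradicting $d(p,q_k) \to \infty$.

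To produce $\Gamma_0$, for each $\eta \in \widetilde{M}(\infty)$ set $A_\eta := \widetilde{M}(\infty) \setminus \overline{C(c'_{p,\eta}(0), 2\epsilon)}$, an open set. Minimality of the $\Gamma$-action on $\widetilde{M}(\infty)$ (Proposition~\ref{pro20}(5)) makes $\{\beta(A_\eta)\}_{\beta \in \Gamma}$ an open cover of the compact space $\widetilde{M}(\infty)$, from which we extract a finite subcover indexed by $\Gamma_\eta \subset \Gamma$. The sets $W_\eta := \{\eta' \in \widetilde{M}(\infty) : \measuredangle_p(\eta, \eta') < \epsilon\}$ likewise form an open cover of $\widetilde{M}(\infty)$; choose a finite subcover $W_{\eta^{(1)}}, \ldots, W_{\eta^{(L)}}$ and define $\Gamma_0 := \bigcup_{i=1}^L \Gamma_{\eta^{(i)}}$, which is a finite subset of $\Gamma$ depending only on $\epsilon$ and $p$. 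A direct angular computation shows that $\measuredangle_p(\eta,\eta^{(i)}) < \epsilon$ implies $\overline{C(c'_{p,\eta}(0), \epsilon)} \subset \overline{C(c'_{p,\eta^{(i)}}(0), 2\epsilon)}$, so for every $\zeta \in \widetilde{M}(\infty)$ and every $\eta \in W_{\eta^{(i)}}$ some $\beta \in \Gamma_{\eta^{(i)}} \subset \Gamma_0$ satisfies $\beta^{-1}(\zeta) \in A_{\eta^{(i)}} \subset \widetilde{M}(\infty) \setminus \overline{C(c'_{p,\eta}(0), \epsilon)}$.

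Applying this fact with $\eta = \eta_0$ and $\zeta = \zeta_0$ furnishes $\beta \in \Gamma_0$ such that $\beta^{-1}(\zeta_0)$ lies strictly outside $\overline{C(c'_{p,\eta_0}(0), \epsilon)}$. By Corollary~\ref{cor7} one may choose an open neighborhood $U$ of $\beta^{-1}(\zeta_0)$ with $\nu(U) < \epsilon$, and after shrinking $U$ if necessary we also have $U \cap C(c'_{p,\eta_0}(0), \epsilon) = \emptyset$. Then $K := \beta(\widetilde{M}(\infty) \setminus U)$ is a closed subset of $\widetilde{M}(\infty)$ avoiding $\zeta_0$, so the convergence property produces a further subsequence along which $\alpha_{n_k}\beta(\widetilde{M}(\infty) \setminus U) = \alpha_{n_k}(K) \subset C(c'_{p,\eta_0}(0), \epsilon)$, finishing the argument. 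The main obstacle is verifying the convergence property uniformly on compact sets, especially in the rank $1$ no focal points case, where there is no global uniform visibility and one must instead patch together local control near rank $1$ geodesics, drawing on Lemma~\ref{lem6} together with the continuity statement in Proposition~\ref{pro20}(1).
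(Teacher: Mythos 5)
Your construction of $\Gamma_{0}$ (minimality plus two compactness arguments) and your use of $\beta$ to move the backward limit $\zeta_{0}=\lim\alpha_{n_k}^{-1}(p)$ out of the cone are fine, and in the uniform visibility case the ``convergence property'' you rely on (if $\alpha_{n_k}(p)\to\eta_{0}$ and $\alpha_{n_k}^{-1}(p)\to\zeta_{0}$, then $\alpha_{n_k}$ pushes every closed $K\subset\widetilde{M}(\infty)\setminus\{\zeta_{0}\}$ into any prescribed cone about $\eta_{0}$) is indeed a classical fact going back to Eberlein, so in Case II your route would likely work and is genuinely different from the paper's. The gap is that this convergence property is \emph{false} in the rank $1$ without focal points case, which is exactly the harder half of the statement. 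Take a rank $1$ nonpositively curved manifold (hence without focal points) whose universal cover contains a totally geodesic flat plane $F$ with $\Gamma$ cocompact, e.g.\ a nonpositively curved graph manifold, and let $\alpha_{n}=\gamma^{n}$ where $\gamma\in\Gamma$ translates $F$. Then $\gamma^{n}p\to\eta_{0}$ and $\gamma^{-n}p\to\zeta_{0}$ (the two endpoints of the translation direction), but $\gamma$ acts on $F$ as a Euclidean translation and therefore fixes \emph{every} point of the boundary circle of $F$ inside $\widetilde{M}(\infty)$; a closed set $K$ containing such a fixed point $\xi\neq\zeta_{0},\eta_{0}$ is never mapped into a small cone about $\eta_{0}$. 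Your sketch of the proof of the property breaks at precisely this point: it requires a rank $1$ geodesic from $\zeta_{0}$ to each $\zeta\in K$ in order to invoke Lemma~\ref{lem6}, but for pairs of points on the ideal boundary of a flat no such geodesic exists (there may be no connecting geodesic at all, and certainly no rank $1$ one), and without uniform visibility the final ``bounded segment far from $p$ subtending a definite angle'' step has no substitute. Since the entire argument funnels through this property, the proof is incomplete in Case I.

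For comparison, the paper never asks the given sequence $\alpha_{n}$ to contract the boundary. It produces, via Theorem~\ref{Ba}, two rank $1$ axial isometries $\beta_{1},\beta_{2}$ and takes $\Gamma_{0}=\{\beta_{1}^{\pm N},\beta_{2}^{\pm N}\}$; Proposition~\ref{pr75} (north--south dynamics of these \emph{fixed} rank $1$ elements) forces $\delta_{n}^{-1}p=\beta_{1}^{-N}\alpha_{n}^{-1}p$ into a set $V_{1}^{-}$ deep near $c_{1}(-\infty)$, and the contraction then comes from the shadow estimate \eqref{eq69} (Lemma 7.3 of \cite{LLW}): from any point of $V_{1}^{-}$ the whole complement of $U_{1}^{-}$ has visual angle $<\epsilon/2$, so $\delta_{n}\bigl(\overline{\widetilde{M}}-U_{1}^{-}\bigr)$ is angularly small as seen from $p$, and it only remains to check $\delta_{n}\eta_{0}\to\eta_{0}$. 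In other words, the contraction is borrowed from the rank $1$ axial elements, not from $\alpha_{n}$. To salvage your approach you would have to restrict the convergence claim to the modified sequences $\alpha_{n}\beta$ with $\beta$ a large power of a rank $1$ axial isometry chosen according to where $\zeta_{0}$ sits relative to the axes --- which is essentially the paper's case analysis and proof.
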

\begin{proof}
(1) By Theorem~\ref{Ba}, we know that there exist two distinct rank $1$ axes $c_{1}$ and $c_{2}$,
the corresponding axial elements are $\beta_{1}$ and $\beta_{2}$, respectively.
For $i=1,2$, take $U^{\pm}_{i}\subset \overline{\widetilde{M}}$ the pairwise disjoint small neighbourhoods of $c_{i}(\pm\infty)$.
By Corollary~\ref{cor7} we can choose these neighbourhoods so small that for a given $\epsilon >0$,
$$
\nu\Big(U^{\pm}_{i}\cap \widetilde{M}(\infty)\Big)<\epsilon,~~~i=1,~2.
$$

By Lemma 7.3 of our previous paper~\cite{LLW}, we can choose $V^{\pm}_{i} \subset U^{\pm}_{i}$ such that
\begin{equation}\label{eq69}
\measuredangle_q\Big(\overline{\widetilde{M}}-U^{\pm}_{i}\Big) < \frac{\epsilon}{2},~~~q\in V^{\pm}_{i}, ~i=1,~2.
\end{equation}

Choose $\epsilon$ and $U^{\pm}_{i}$ small enough such that for any pair $(\xi,\eta)\in \widetilde{M}(\infty)\times \widetilde{M}(\infty)$,
at least one of the following four conditions holds:
$$
(\mathbf{I})~~~\xi \in \widetilde{M}(\infty) - U^{+}_{i},~~C\big(c'_{p,\eta}(0),\epsilon\big)\subset \overline{\widetilde{M}}-U^{-}_{i},~~~~~i=1,~2;
$$
$$
(\mathbf{II})~~~\xi \in \widetilde{M}(\infty) - U^{-}_{i},~~C\big(c'_{p,\eta}(0),\epsilon\big)\subset \overline{\widetilde{M}}-U^{+}_{i},~~~~~i=1,~2.
$$

\begin{figure}[hb]
		\centering
		\includegraphics[width=0.5\textwidth]{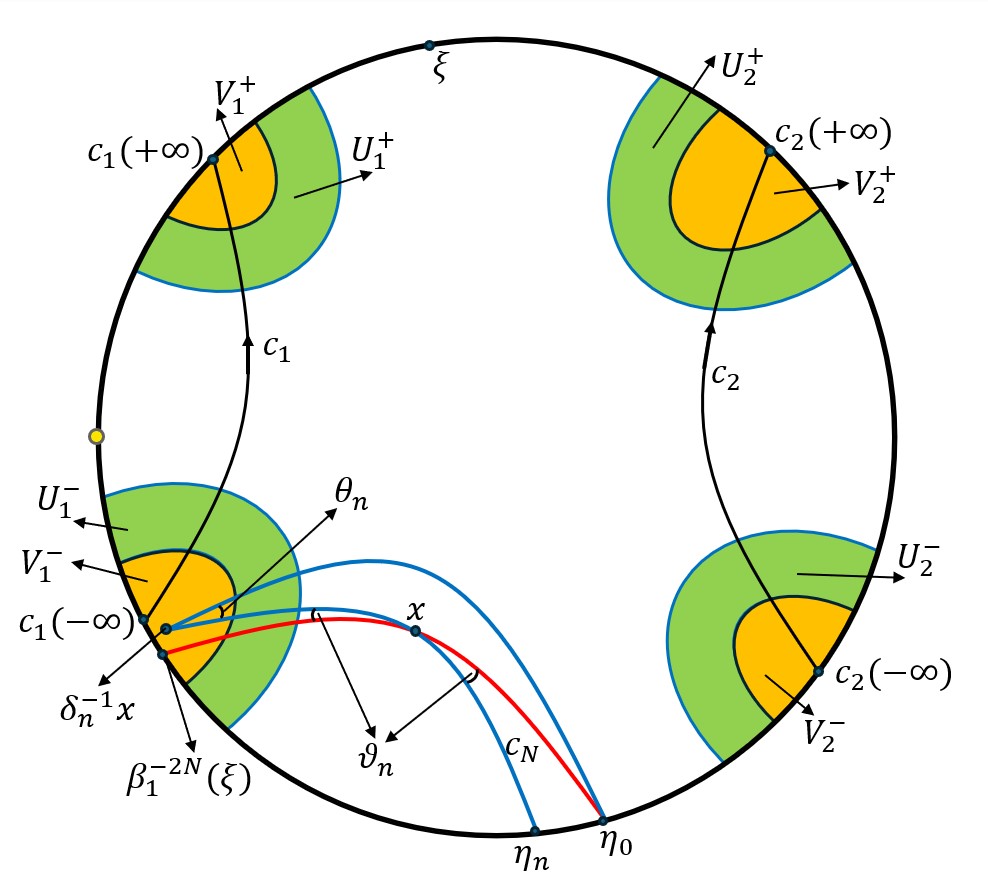}
\end{figure}

By Proposition~\ref{pr75}(1), there exists a number $N\in \mathbb{N}^{+}$, such that for any $n\geqslant N$, we have
\begin{equation}\label{eq72}
\beta_{i}^{n}(\overline{\widetilde{M}}-V^{-}_{i})\subset V^{+}_{i}, ~~\beta_{i}^{-n}(\overline{\widetilde{M}}-V^{+}_{i})\subset V^{-}_{i},~~~~i=1,~2.
\end{equation}

Let $\Gamma_{0}\triangleq \Big\{\beta_{1}^{N}, \beta_{2}^{N}, \beta_{1}^{-N}, \beta_{2}^{-N}\Big\}$.
By passing to a sub-sequence if necessary, we may assume that
$\lim_{n\rightarrow+\infty}\alpha^{-1}_{n}(p)\triangleq\xi \in \widetilde{M}(\infty)$ in the cone topology.
Without loss of generality, we can assume the condition $(\mathbf{I})$ holds for the pair $(\xi,\eta_{0})\in \widetilde{M}(\infty)\times \widetilde{M}(\infty)$,
with $i=1$, i.e.,
$$
\xi \in \widetilde{M}(\infty) - U^{+}_{1},~~C\big(c'_{p,\eta_{0}}(0),\epsilon\big)\subset \overline{\widetilde{M}}-U^{-}_{1}.
$$
In this case, denote by $\delta_{n}\triangleq \alpha_{n}\beta^{N}_{1}$,
\begin{equation}\label{eq70}
\measuredangle_p\Big(\delta_{n}\xi, \delta_{n}\big(\overline{\widetilde{M}}-U^{-}_{1}\big)\Big)
= \measuredangle_{\delta^{-1}_{n}p}\Big(\xi, \overline{\widetilde{M}}-U^{-}_{1}\Big)
< \frac{\epsilon}{2}.
\end{equation}
The last inequality uses \eqref{eq69} and the fact that
$\delta^{-1}_{n}p=\beta^{-N}_{1}\alpha^{-1}_{n}(p)\rightarrow\beta^{-N}_{1}(\xi)\in V^{-}_{1}$.


By \eqref{eq72}, $\beta^{-N}_{1}\xi\in V^{-}_{1}$ and $\beta^{N}_{1}\eta_{0}\in V^{+}_{1}$.
Choose $V^{+}_{1}$ and $V^{-}_{1}$ small enough such that the Lemma~\ref{lem6} holds,
then there is a unique connecting geodesic $c_{\beta^{-N}_{1}\xi, \beta^{N}_{1}\eta_{0}}$.
Let $c_{N}\triangleq \beta^{-N}_{1} c_{\beta^{-N}_{1}(\xi),\beta^{N}_{1}\eta_{0}}=c_{\beta^{-2N}_{1}(\xi),\eta_{0}}$ and $x\triangleq c_{N}(0)$,
then
\begin{equation}\label{eq71}
\measuredangle_{x}\big(\delta_{n}\eta_{0}, \eta_{0}\big)
                                            \leq  \measuredangle_{x}\big(\delta_{n}\eta_{0}, \delta_{n}x\big) +
                                                        \measuredangle_{x}\big(\delta_{n}x, \eta_{0}\big).
\end{equation}

Now we want to show that $\measuredangle_{x}\big(\delta_{n}\eta_{0}, \eta_{0}\big)\rightarrow 0$ from the inequality \eqref{eq71}.
Because $x$ is a point independent of $n$, we can further conclude that $\lim_{n\rightarrow +\infty}\delta_{n}\eta_{0}=\eta_{0}$ in the cone topology.

On the one hand, by Proposition~\ref{pro20}(6), we know that $\delta_{n}x=\alpha_{n}\big(\beta^{N}_{1}x\big)\rightarrow \eta_{0}$.
Therefore, the second term on the right-hand side of inequality \eqref{eq71} satisfies $\measuredangle_{x}\big(\delta_{n}x, \eta_{0}\big)\rightarrow 0$.

On the other hand, Proposition~\ref{pro20}(6) implies that $\alpha^{-1}_{n}x\rightarrow\xi \in \widetilde{M}(\infty) - U^{+}_{1}$,
thus $\delta^{-1}_{n}x=\beta^{-N}_{1}\alpha^{-1}_{n}x \in V^{-}_{1}$. For each $n \in \mathbb{N}^{+}$,
denote by
\begin{displaymath}
		\begin{aligned}
        \eta_{n}    & \triangleq  c_{\delta^{-1}_{n}x,x}(+\infty), ~~~\theta_{n}\triangleq  \measuredangle_{\delta^{-1}_{n}x}\big(x, \eta_{0}\big), \\
     \vartheta_{n}  & \triangleq  \measuredangle_{x}\big(\beta^{-N}_{1}\xi, \delta^{-1}_{n}x\big)
                                   = \measuredangle_{x}\big(\eta_{0}, \eta_{n}\big).\\
\end{aligned}
\end{displaymath}
By Lemma~\ref{lem161}, $\delta^{-1}_{n}x=\beta^{-N}_{1}\alpha^{-1}_{n}x \rightarrow \beta^{-N}_{1}\xi$ in the cone topology,
thus $\vartheta_{n}\rightarrow 0$. By Lemma 2.9 in \cite{Wa}, $\theta_{n}\leqslant \vartheta_{n}\rightarrow 0$.

We conclude that $\measuredangle_{x}\big(\delta_{n}\eta_{0}, \eta_{0}\big)\rightarrow 0$,
thus $\lim_{n\rightarrow +\infty}\delta_{n}\eta_{0}=\eta_{0}$.
Finally, by \eqref{eq70} it follows that this Proposition holds when taking $U\triangleq \widetilde{M}(\infty) \cap U^{-}_{1}$.

(2) For the visibility manifolds without conjugate points,
all geometric properties hold except for Lemma 2.9 in \cite{Wa} that used in the proof of (1).
Thus we only need to prove $\measuredangle_{x}\big(\delta_{n}\eta_{0}, \eta_{0}\big)\rightarrow 0$.
In fact, since $\eta_{0}\in \overline{\widetilde{M}}-U^{-}_{1}$, we can choose the parametrization of $c_{N}$
such that $x \triangleq c_{N}(0) \in \overline{\widetilde{M}}-U^{-}_{1}$, then by the definition of uniform visibility and
$\delta^{-1}_{n}x=\beta^{-N}_{1}\alpha^{-1}_{n}x \rightarrow \beta^{-N}_{1}\xi\subset V^{-}_{1}$, we can conclude that
$\measuredangle_{x}\big(\delta_{n}\eta_{0}, \delta_{n}(x)\big)=\measuredangle_{\delta^{-1}_{n}x}\big(\eta_{0}, x\big)\rightarrow 0$,
thus $\measuredangle_{x}\big(\delta_{n}\eta_{0}, \eta_{0}\big) \leq
\measuredangle_{x}\big(\delta_{n}\eta_{0}, \delta_{n}x\big) + \measuredangle_{x}\big(\delta_{n}x, \eta_{0}\big) \rightarrow 0$.
\end{proof}

\section{\bf The Dirichlet Problem at Infinity}\label{BMS}
\setcounter{equation}{0}\setcounter{theorem}{0}

Now we investigate the Dirichlet problem at infinity and the proof of Theorem \ref{Dirichlet} will be given in this section.

\subsection{The Construction of the Probability Measure $\mu$ on $\Gamma$}\label{definenu}

We now give a construction of the probability measure $\mu$ on $\Gamma$ due to Lyons and Sullivan.

\begin{lemma}\label{lemmaLSmeasure}\cite{LS}
Suppose $X$ is a discrete set in $\widetilde{M}$, for each $x\in X$, there exist a relatively compact set
$E_x$ and an open set $B_x$, such that

\begin{enumerate}
  \item $\overline{E}_x\subset B_x$ for all $x\in X$.
  \item $\bigcup\limits_{x\in X}E_x=\widetilde{M}$.
  \item The supremum of the Harnack constants of the pairs $(B_x,E_x)$ is bounded, that is, there exists a constant $C<\infty$, such that
  for any positive harmonic function $h_x$ on $B_x$,
  \begin{equation}
  \sup\limits_{x\in X}\left\{\sup\limits_{y,z\in E_x}\left\{\frac{h_x(y)}{h_x(z)}\right\}\right\}\leq C.
  \end{equation}

\end{enumerate}
Then there exists a family of probability measures $\{\tilde{\mu}_y\}_{y\in\widetilde{M}}$ on $X$ such that
\begin{enumerate}
  \item $\tilde{\mu}_y(x)>0, \forall x\in X.$
  \item For any positive harmonic function $h:\widetilde{M}\rightarrow \mathbb{R}$,
  \begin{equation}\label{lsmeasure(2)}
  h(y)=\sum\limits_{x\in X}h(x)\tilde{\mu}_y(x).
  \end{equation}
  \item If $\alpha: \widetilde{M}\rightarrow \widetilde{M}$ is an isometry and $\alpha(X)=X$, then
  \begin{equation}\label{lsmeasure(3)}
  \tilde{\mu}_{\alpha y}(\alpha x)=\tilde{\mu}_y(x),\forall x\in X,y\in\widetilde{M}.
  \end{equation}
\end{enumerate}
\end{lemma}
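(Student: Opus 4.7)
My plan is to reproduce the Lyons--Sullivan discretization scheme, in which one builds $\tilde{\mu}_y$ by running Brownian motion from $y$ on $\widetilde{M}$ and, at each successive entry of the trajectory into one of the sets $E_x$, using the uniform Harnack bound $C$ to \emph{deposit} a prescribed fraction of mass at the center $x$ and \emph{re-emit} the remainder back into $\widetilde{M}$ along a modified exit distribution from $B_x$, continuing the Markov scheme from the re-emission point.

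The core step is a splitting lemma for the exit distribution of Brownian motion from $B_x$. Let $\omega^{B_x}_z$ denote the harmonic measure on $\partial B_x$ based at $z \in E_x$. Applying the uniform Harnack bound (exactly hypothesis (3) on the pairs $(B_x,E_x)$) to the family of positive harmonic functions $\xi \mapsto P(\xi \in A)$ for Borel $A \subset \partial B_x$, one obtains the measure inequality $\omega^{B_x}_z \geq C^{-1}\omega^{B_x}_x$ on $\partial B_x$, which in turn yields the decomposition
$$\omega^{B_x}_z \;=\; C^{-1}\,\omega^{B_x}_x \;+\; \rho^{x}_z,$$
where $\rho^{x}_z$ is a non-negative measure of total mass $1 - C^{-1}$. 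Now I construct the scheme inductively. Let $0 = \sigma_0 < \sigma_1 < \sigma_2 < \cdots$ be stopping times where $\sigma_n$ is the $n$-th entry of the trajectory into $\bigcup_x E_x$, say at point $z_n \in E_{x_n}$. At each $\sigma_n$, flip an independent coin of bias $C^{-1}$: on heads, terminate and record the point $x_n$; on tails, sample a random $\xi_n \in \partial B_{x_n}$ from $\rho^{x_n}_{z_n}/(1-C^{-1})$ and restart the Brownian motion from $\xi_n$. Define $\tilde{\mu}_y(x)$ to be the probability that the scheme terminates while recording the site $x$.

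With this construction, positivity $\tilde{\mu}_y(x) > 0$ is immediate because $\bigcup_{x} E_x$ covers $\widetilde{M}$ and each step has termination probability uniformly bounded below by $C^{-1}$, so any prescribed $x \in X$ is reached in finitely many steps with positive probability. The representation formula (\ref{lsmeasure(2)}) follows from a martingale argument: the splitting above was designed precisely so that, for any positive harmonic $h$ on $\widetilde{M}$,
$$h(z_n) \;=\; C^{-1}\,h(x_n) \;+\; \int h\,d\rho^{x_n}_{z_n},$$
hence iterating and passing to the limit gives $h(y) = \sum_{x \in X} h(x)\,\tilde{\mu}_y(x)$. The isometric equivariance (\ref{lsmeasure(3)}) then holds because the entire construction depends only on the geometric data $\{(E_x, B_x)\}_{x\in X}$, which is transported by $\alpha$, together with the law of Brownian motion, which is preserved by $\alpha$; hence the scheme from $\alpha y$ is exactly the $\alpha$-pushforward of the scheme from $y$.

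The main technical obstacle is justifying the limit in the martingale identity: after $n$ steps the residual can be written as $(1-C^{-1})^n$ times a positive harmonic function evaluated at a random point, and one must prove this expectation tends to zero. For bounded positive $h$ this is elementary, but the conclusion is formulated for general positive harmonic functions, so one invokes the Lyons--Sullivan maximal inequality from \cite{LS} to control the residual in $L^1$. Once this control is in hand, conclusions (1), (2) and (3) follow directly from the structure of the scheme as outlined above.
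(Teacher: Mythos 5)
Your proposal is, in substance, the same construction the paper uses: the paper reproduces the Lyons--Sullivan scheme in measure-theoretic form, starting from $\nu_0$ and iterating $\lambda_{n+1}=\sum_{\alpha}\frac1C\nu_n(\alpha(E))\mathcal{E}_{\alpha p}^{\alpha(B)}$, $\nu_{n+1}=\sum_{\alpha}(\nu_n|_{\alpha(E)})^{\alpha(B)}-\lambda_{n+1}$, and setting $\tilde\mu_y=\sum_k\tau_k$; your coin-flip/re-emission chain is exactly the probabilistic reading of this iterated balayage, and your splitting $\omega^{B_x}_z=C^{-1}\omega^{B_x}_x+\rho^x_z$ is the same use of the Harnack hypothesis that makes $\nu_{n+1}\geq 0$. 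The one real divergence is your treatment of the limit step for general (unbounded) positive harmonic $h$: you defer it to an unspecified ``Lyons--Sullivan maximal inequality,'' which is both vague and unnecessary. The paper closes this elementarily by applying the Harnack hypothesis a second time, now to $h$ itself at the deposit point: since $x_n$ and $z_n$ both lie in $E_{x_n}$, one has $h(x_n)\geq C^{-1}h(z_n)$, so each deposit removes at least a $C^{-2}$ fraction of the residual $h$-mass; this is the estimate $\lambda_{n+1}(h)\geq C^{-2}\nu_n(h)$, giving $\nu_n(h)\leq\bigl(1-\tfrac{1}{C^2}\bigr)^n h(y)\rightarrow 0$, which is all that is needed (together with exactness of the mean-value identity for the exit distribution from the relatively compact balls). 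You should replace the citation by this one-line estimate, which is already within your setup. Two smaller caveats: your claim that $\tilde\mu_y(x)>0$ is ``immediate'' still needs an argument that the chain charges every site $x$ (the re-emission measures $\rho^{x_n}_{z_n}$ are not a priori of full support; the paper itself leaves this point to \cite{LS}), and since $\bigcup_x E_x=\widetilde{M}$ your ``entry times'' are degenerate --- the exit point from $B_{x_n}$ already lies in some $E_{x_{n+1}}$, which is precisely how the paper's recursion is organized.
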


We choose a fixed point $p\in \widetilde{M}$ and let $X=\Gamma(p)$ in our situation. Choose a relatively compact Borel set $E$ such that
$$\bigcup\limits_{\alpha\in \Gamma}\alpha(E)=\widetilde{M},~\mbox{and}~\alpha(E)\cap\beta(E)=\emptyset,\forall\alpha\neq\beta \in \Gamma.$$
Choose an open geodesic ball $B$ about $p$ such that $\overline{E}\subset B$. The family of sets pairs $(\alpha(B),\alpha(E))_{\alpha\in\Gamma}$ satisfies
the conditions of Lemma \ref{lemmaLSmeasure} since $\Gamma$ acts by isometries.

Denote by $\mathcal{E}_q^U$ the exit distribution from $U$ for Brownian motion starting at $q\in U$, where $U\subset \widetilde{M}$ be any open, relatively compact
domain with smooth boundary $\partial U$. Then the support of $\mathcal{E}_q^U$ is in $\partial U$ and
$\mathcal{E}_q^U$ is a probability measure on $\widetilde{M}$.

Denote by $\tilde{\nu}^U$ the balayage of measure $\tilde{\nu}$ on $\widetilde{M}$ with support in $U$, that is
\begin{equation}
\tilde{\nu}^U(\cdot)=\int_{q\in\widetilde{M}}\mathcal{E}_q^U(\cdot)\tilde{\nu}({\rm d} q).
\end{equation}
Hence for any bounded measurable function $f$,
\begin{equation}
\tilde{\nu}^U(f)\triangleq\int_{x\in\widetilde{M}}f(x)\tilde{\nu}^U({\rm d}x)=\int_{x\in\widetilde{M}}\int_{q\in\widetilde{M}}
f(x)\mathcal{E}_q^U({\rm d}x)\tilde{\nu}({\rm d}q).
\end{equation}
By the mean value property of harmonic functions, if $f$ is harmonic,
\begin{equation}\label{varepsilonf=fq}
  \mathcal{E}_q^U(f)=\int_{x\in\widetilde{M}}f(x)\mathcal{E}_q^U({\rm d}x)= \int_{x\in\partial{U}}f(x)\mathcal{E}_q^U({\rm d}x)= f(q),
\end{equation}
since the support of $\mathcal{E}_q^U$ is in $\partial U$.
Then
\begin{eqnarray}\label{muuf=muf}
  \tilde{\nu}^U(f)&=&\int_{q\in\widetilde{M}}\left(\int_{x\in\widetilde{M}}f(x)\mathcal{E}_q^U({\rm d}x)\right)\tilde{\nu}({\rm d}q)\nonumber\\
 & =&\int_{q\in\widetilde{M}}\mathcal{E}_q^U(f)\tilde{\nu}({\rm d}q)\nonumber\\&=&\int_{q\in\widetilde{M}}f(q)\tilde{\nu}({\rm d}q)=\tilde{\nu}(f).
\end{eqnarray}

First we construct $\mu_y$ in the case $y=\alpha_0 p,$ where $\alpha_0\in\Gamma$.   Let $\nu_0=\mathcal{E}_{\alpha_0 p}^{\alpha_0(B)}$, it is clear that $\nu_0$ is a finite positive measure on $\widetilde{M}$. Construct inductively finite positive measures $\lambda_n,\nu_n,n=1,2,\cdots$ by
\begin{equation}\label{lambdan+1}
\lambda_{n+1}=\sum_{\alpha\in\Gamma}\frac{1}{C}\nu_n(\alpha(E))\mathcal{E}_{\alpha p}^{\alpha (B)},~~\nu_{n+1}=\sum_{\alpha\in\Gamma}
(\nu_n|_{\alpha(E)})^{\alpha(B)}-\lambda_{n+1},
\end{equation}
where $\nu_n|_{\alpha(E)}$ is the restriction of $\nu_n$ to $\alpha(E)$ and $C$ is the Harnack constant defined in Lemma \ref{lemmaLSmeasure}.
For any positive harmonic function $h$ on $\widetilde{M}$ which is $\nu_0$-summable, then $h$ is also $\lambda_{n}$-summable
and $\nu_n$-summable for $n=1,2,\cdots$, and by (\ref{varepsilonf=fq}), (\ref{muuf=muf}), (\ref{lambdan+1}) and the definition of the Harnack constant $C$,
\begin{eqnarray}\label{lambda+mun+1}
 \nu_{n+1}(h)+\lambda_{n+1}(h)&=&\int_{x\in\widetilde{M}}h(x)\sum_{\alpha\in\Gamma}
(\nu_n|_{\alpha(E)})^{\alpha(B)}({\rm d}x)\nonumber\\
&=&\sum_{\alpha\in\Gamma}\int_{x\in\widetilde{M}}h(x)
(\nu_n|_{\alpha(E)})^{\alpha(B)}({\rm d}x)\nonumber\\
&=&\sum_{\alpha\in\Gamma}\int_{x\in\widetilde{M}}h(x)
(\nu_n|_{\alpha(E)})({\rm d}x)\nonumber\\
&=&\sum_{\alpha\in\Gamma}\int_{x\in\alpha(E)}h(x)
\nu_n({\rm d}x)\nonumber\\
&=&\int_{x\in\widetilde{M}}h(x)
\nu_n({\rm d}x)=\nu_n(h),
\end{eqnarray}
\begin{eqnarray}\label{lambdan+1geqc2}
\lambda_{n+1}(h)&=&\int_{x\in\widetilde{M}}h(x)\sum_{\alpha\in\Gamma}\frac{1}{C}\nu_n(\alpha(E))\mathcal{E}_{\alpha p}^{\alpha (B)}({\rm d}x)\nonumber\\
&=&\sum_{\alpha\in\Gamma}\frac{1}{C}\nu_n(\alpha(E))\int_{x\in\widetilde{M}}h(x)\mathcal{E}_{\alpha p}^{\alpha (B)}({\rm d}x)\nonumber\\
&=&\sum_{\alpha\in\Gamma}\frac{1}{C}\nu_n(\alpha(E))h(\alpha p)=\frac{1}{C^2}\sum_{\alpha\in\Gamma}\int_{x\in\alpha(E)}Ch(\alpha p)\nu_n({\rm d}x)\nonumber\\
&\geq&\frac{1}{C^2}\sum_{\alpha\in\Gamma}\int_{x\in\alpha(E)}h(x)\nu_n({\rm d}x)\nonumber\\
&=&\frac{1}{C^2}\int_{x\in\widetilde{M}}h(x)\nu_n({\rm d}x)=\frac{1}{C^2}\nu_n(h).
\end{eqnarray}
Denote by $\mathcal{E}_y$ the family of measures on $\widetilde{M}$ which is the Dirac measure at $y$, that is $\mathcal{E}_y(y)=1,\mathcal{E}_y(z)=0,\forall y\neq z\in\widetilde{M}.$
Let $\tau_{n+1}^{}=\sum_{\alpha\in\Gamma}\frac{1}{C}\nu_n(\alpha(E))\mathcal{E}_{\alpha p}, n=0,1,2,\cdots$, then the measures $\tau_n^{}$ are supported on $\Gamma(p)$.
If $h$ is a $\nu_0$-summable positive harmonic function on $\widetilde{M}$, by (\ref{varepsilonf=fq}), (\ref{lambdan+1}), (\ref{lambda+mun+1}) and (\ref{lambdan+1geqc2}), we obtain
\begin{eqnarray}\label{halphap=munh+}
  &&h(\alpha_0 p)=\mathcal{E}_{\alpha_0 p}^{\alpha_0(B)}(h)=\nu_0(h)\nonumber\\
  &=&\nu_1(h)+\lambda_1(h)=\nu_2(h)+\lambda_2(h)+\lambda_1(h)=\nu_3(h)+\lambda_3(h)+\lambda_2(h)+\lambda_1(h)=\cdots\nonumber\\
  &=&\nu_n(h)+\sum_{k=1}^{n}\lambda_k(h)=\nu_n(h)+\sum_{k=1}^{n}\left(\sum_{\alpha\in\Gamma}\frac{1}{C}\nu_{k-1}(\alpha(E))\mathcal{E}_{\alpha p}^{\alpha (B)}(h)\right)\nonumber\\
  &=&\nu_n(h)+\sum_{k=1}^{n}\left(\sum_{\alpha\in\Gamma}\frac{1}{C}\nu_{k-1}(\alpha(E))h({\alpha p})\right)\nonumber\\
  &=&\nu_n(h)+\sum_{k=1}^{n}\left(\sum_{\alpha\in\Gamma}\frac{1}{C}\nu_{k-1}(\alpha(E))\mathcal{E}_{\alpha p}(h)\right)=\nu_n(h)+\sum_{k=1}^{n}\tau_k^{}(h).
\end{eqnarray}
Since by  (\ref{lambda+mun+1}) and (\ref{lambdan+1geqc2}),
$$\nu_{n-1}(h)=\nu_n(h)+\lambda_{n}(h)\geq\nu_n(h)+\frac{1}{C^2}\nu_{n-1}(h),$$
we have
\begin{equation}\label{}
  \nu_n(h)\leq\left(1-\frac{1}{C^2}\right)\nu_{n-1}(h)\leq\cdots\leq\left(1-\frac{1}{C^2}\right)^n\nu_{0}(h)=\left(1-\frac{1}{C^2}\right)^nh(\alpha_0 p).
\end{equation}
Let $n\rightarrow\infty$ in (\ref{halphap=munh+}), we obtain
\begin{equation}\label{nualphapdef}
  h(\alpha_0 p)=\sum_{k=1}^{\infty}\tau_k^{}(h)\triangleq\mu_{\alpha_0 p}(h).
\end{equation}
Let $h\equiv1$ in (\ref{nualphapdef}) and note that the measures $\tau_k$ are supported on $\Gamma(p)$, we obtain that
$\mu_{\alpha_0 p}=\sum_{k=1}^{\infty}\tau_k^{}$ is a probability measure on $X=\Gamma(p)$.

For $y\notin\Gamma(p)$, the construction of $\mu_y$ is similar to the case of $\mu_{\alpha_0 p}$, we just need to let $\nu_0=\mathcal{E}_y$,
the Dirac measure at $y$.
Similarly, we have
$$ h(y)=\sum_{k=1}^{\infty}\tau_k^{}(h)\triangleq\mu_y(h).$$

\begin{Defi}
For any $y\in \widetilde{M},$ define
\begin{equation}\label{}
  \tilde{\mu}_y=\sum_{k=1}^{\infty}\tau_k,
\end{equation}
where $\nu_0=\mathcal{E}_y,$ $\nu_n$ defined by (\ref{lambdan+1}) and $\tau_{n+1}^{}=\sum_{\alpha\in\Gamma}\frac{1}{C}\nu_n(\alpha(E))\mathcal{E}_{\alpha p}, n=0,1,2,\cdots.$
\end{Defi}
By the definition of $\tau_k$, we obtain that $\tilde{\mu}_y$ is a probability measure supported on $\Gamma(p).$

The family of probability measures $\tilde{\mu}_y$ on $\Gamma(p)$ defined above satisfy Lemma \ref{lemmaLSmeasure}. In fact,
for any positive harmonic function $h$ on $\widetilde{M}$ and any isometry $\beta:\widetilde{M}\rightarrow\widetilde{M}$ such that $\beta(\Gamma(p))=\Gamma(p),$ we have
$$h(y)=\sum_{k=1}^{\infty}\tau_k^{}(h)=\tilde{\mu}_y(h)=\sum_{\alpha\in \Gamma}h(\alpha p)\tilde{\mu}_y(\alpha p),$$
and
\begin{eqnarray*}
   \tilde{\mu}_y(\alpha p)&=&\sum_{k=1}^{\infty}\tau_k^{}(\alpha p)
   =\sum_{k=1}^{\infty}\sum_{\alpha'\in\Gamma}\frac{1}{C}\nu_{n-1}(\alpha'(E))\mathcal{E}_{\alpha'p}(\alpha p) \\
  &=& \sum_{k=1}^{\infty}\frac{1}{C}\nu_{n-1}(\alpha(E))
   = \sum_{k=1}^{\infty}\frac{1}{C}\nu_{n-1}(\beta\alpha(E))\\
   &=&\sum_{k=1}^{\infty}\sum_{\alpha'\in\Gamma}\frac{1}{C}\nu_{n-1}(\alpha'(E))\mathcal{E}_{\alpha'p}(\beta\alpha p) \\
&=&\sum_{k=1}^{\infty}\tau_k^{}(\beta\alpha p)
=\tilde{\mu}_{\beta y}(\beta \alpha p).
\end{eqnarray*}
Thus (\ref{lsmeasure(2)}) and (\ref{lsmeasure(3)}) hold.

Let $\mu_p(\alpha)=\tilde{\mu}_p(\alpha p),\forall \alpha \in \Gamma.$ Then $\mu_p$ is a probability measure on $\Gamma$ and depend on the choice of $p$. In the following, we will denote by $\mu$ the probability measure $\mu_p$ for a fixed point $p\in\widetilde{M}$.

\begin{Defi}\label{defmup}
Define the probability measure $\mu$ on $\Gamma$ by
\begin{equation}\label{equationmup}
\mu(\alpha)\triangleq\mu_p(\alpha)=\tilde{\mu}_p(\alpha p),\forall \alpha \in \Gamma.
\end{equation}
\end{Defi}

By the definition of $\tilde{\mu}_y$, the probability measure $\mu$ satisfies the condition in Definition \ref{def10}, that is,  the support of $\mu$ generates $\Gamma$ as a semi-group.
Define the $i$-th moment of $\mu$ by $\sum_{\alpha \in \Gamma}|\alpha|^i\mu(\alpha),$ where $|\alpha|=d(p,\alpha p)$.

\begin{lemma}\label{finitemoment}
  The measure $\mu$ defined above has finite moments of all orders. In fact, $\sum_{\alpha\in\Gamma}{\rm e}^{a|\alpha|}\mu(\alpha)<+\infty$
  for $0<a<\frac{-1}{2R_B}\ln\left(1-\frac1C\right)$, where $R_B$ is the radius of the geodesic ball $B$.
\end{lemma}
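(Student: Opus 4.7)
The plan is to establish the exponential moment bound directly; finite polynomial moments of all orders will then follow automatically since $|\alpha|^i \leq C_i\, e^{a|\alpha|}$ for any positive $a$. The strategy is to exploit two quantitative features of the Lyons--Sullivan construction: a geometric decay of the total mass $\nu_n(\widetilde{M})$, and a deterministic localization of $\mathrm{supp}(\nu_n)$ inside a ball whose radius grows linearly in $n$.

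First I would compute $\nu_n(\widetilde{M})$ explicitly. Since each $\mathcal{E}_q^U$ is a probability measure and $\widetilde{M} = \bigsqcup_\alpha \alpha(E)$, plugging $h\equiv 1$ into the identities leading up to \eqref{lambda+mun+1} and \eqref{lambdan+1geqc2} gives $\sum_\alpha (\nu_n|_{\alpha(E)})^{\alpha(B)}(\widetilde{M}) = \nu_n(\widetilde{M})$ and $\lambda_{n+1}(\widetilde{M}) = \tfrac{1}{C}\nu_n(\widetilde{M})$, hence
\[
\nu_{n+1}(\widetilde{M}) = \Big(1-\tfrac{1}{C}\Big)\nu_n(\widetilde{M}), \qquad \tau_{k}(\widetilde{M}) = \tfrac{1}{C}\Big(1-\tfrac{1}{C}\Big)^{k-1}.
\]

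Next I would track the support. By induction I claim that $\mathrm{supp}(\nu_n) \subset \overline{B_p(2nR_B)}$. The base case $n=0$ is immediate since $\nu_0 = \mathcal{E}_p$. For the inductive step, if $\alpha(E) \cap \mathrm{supp}(\nu_n) \neq \emptyset$ then, because $E\subset B$ has diameter at most $2R_B$ and $p\in\alpha^{-1}(\alpha E)\subset\alpha^{-1}(\alpha B)$ forces $d(p,\alpha p) \leq 2nR_B + R_B$; the balayage $(\nu_n|_{\alpha(E)})^{\alpha(B)}$ sits on $\partial(\alpha B)$, at distance at most $d(p,\alpha p) + R_B \leq 2(n+1)R_B$ from $p$, and the same bound holds for $\lambda_{n+1}$, hence for $\nu_{n+1}$. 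Consequently, $\tau_{k}(\alpha p) = \tfrac{1}{C}\nu_{k-1}(\alpha(E))$ can be non-zero only when $|\alpha| \leq (2k-1)R_B \leq 2kR_B$.

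Putting the pieces together, for any $a>0$ I would estimate
\[
\sum_{\alpha\in\Gamma}e^{a|\alpha|}\mu(\alpha)
= \sum_{k=1}^{\infty}\sum_{\alpha\in\Gamma}e^{a|\alpha|}\tau_k(\alpha p)
\leq \sum_{k=1}^{\infty} e^{2akR_B}\,\tau_k(\widetilde{M})
= \frac{1}{C-1}\sum_{k=1}^{\infty}\bigl(e^{2aR_B}(1-\tfrac{1}{C})\bigr)^{k},
\]
which converges precisely when $e^{2aR_B}(1-\tfrac{1}{C})<1$, i.e.\ when $0<a<\tfrac{-1}{2R_B}\ln(1-\tfrac{1}{C})$, giving the stated range. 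The delicate point is the support-localization induction: one must verify that the slightly loose estimate $d(p,\alpha p)\leq 2nR_B + R_B$ when $\alpha(E)\cap\mathrm{supp}(\nu_n)\neq\emptyset$ is compatible with the next step's bound, and that the diameter bound on $E$ derived from $\overline{E}\subset B$ is indeed $\leq 2R_B$; aside from being careful with the geometric constants here, the rest of the argument is routine bookkeeping around the recursive definition \eqref{lambdan+1}.
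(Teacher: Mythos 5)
Your argument is correct and yields exactly the paper's threshold $a<\frac{-1}{2R_B}\ln\left(1-\frac1C\right)$; it rests on the same two facts the paper exploits, namely that each step of the Lyons--Sullivan recursion multiplies the surviving mass by exactly $1-\frac1C$ and displaces it by at most $2R_B$. The difference is organizational: the paper refines the recursion by decomposing $\nu_n=\sum_{\alpha}\nu_{n,\alpha}$ with $\nu_{n,\alpha}$ supported on $\partial(\alpha(B))$ and propagates the exponentially weighted mass $b_n=\sum_\alpha e^{a|\alpha|}\nu_{n,\alpha}(\partial(\alpha(B)))$ through the per-step inequality $b_{n+1}\le e^{2aR_B}\left(1-\frac1C\right)b_n$, whereas you decouple the two ingredients into the exact identity $\nu_n(\widetilde M)=\left(1-\frac1C\right)^n$ (hence $\tau_k(\widetilde M)=\frac1C\left(1-\frac1C\right)^{k-1}$) plus a deterministic localization of $\mathrm{supp}(\nu_n)$ in a ball of radius growing like $2nR_B$, and then bound $e^{a|\alpha|}$ uniformly by $e^{2akR_B}$ on $\mathrm{supp}(\tau_k)$; both routes produce the same geometric series in $e^{2aR_B}\left(1-\frac1C\right)$. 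Your version avoids the labeled decomposition and is arguably more transparent, while the paper's weighted recursion only needs the relative (per-step) displacement bound, which here is equivalent to your global one. One small adjustment: in the paper's proof of this lemma $\nu_0=\mathcal{E}_p^{B}$ is the exit measure from $B$ (supported on $\partial B$), not the Dirac mass at $p$, so the induction should start from $\mathrm{supp}(\nu_0)\subset\partial B$ and give radius $(2n+1)R_B$ rather than $2nR_B$; this only relaxes the support bound for $\tau_k$ to $|\alpha|\le 2kR_B$ and leaves the final series and the admissible range of $a$ untouched.
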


\begin{proof}
By the construction of $\mu_p$, we have $\nu_0=\mathcal{E}_p^B$.
  Let $\nu_{0,e}=\mathcal{E}_p^B$ and $\nu_{0,\alpha}=0$ for $\alpha\neq e$, where $e$ is the identity element of $\Gamma$.
  Then we have $\nu_0=\sum_{\alpha\in\Gamma}\nu_{0,\alpha}$. Define $\lambda_{n+1,\alpha}$ and $\nu_{n+1,\alpha},n=0,1,2,\cdots$ inductively by
  \begin{equation}\label{}
\lambda_{n+1,\alpha}=\sum_{\alpha'\in\Gamma}\frac{1}{C}\nu_{n,\alpha'}(\alpha(E))\mathcal{E}_{\alpha p}^{\alpha (B)},~~\nu_{n+1,\alpha}=\sum_{\alpha'\in\Gamma}
(\nu_{n,\alpha'}|_{\alpha(E)})^{\alpha(B)}-\lambda_{n+1,\alpha}.
\end{equation}
Then $\lambda_{n+1,\alpha}$ and $\nu_{n+1,\alpha}$ are supported on $\partial(\alpha (B))$ and inductively by (\ref{lambdan+1}) we have
\begin{equation}\label{}
  \lambda_{n+1}=\sum_{\alpha\in\Gamma}\lambda_{n+1,\alpha},~~\nu_{n+1}=\sum_{\alpha\in\Gamma}\nu_{n+1,\alpha},~~n=0,1,2,\cdots.
\end{equation}

  If $|\alpha'\alpha^{-1}|>2R_B$,
  $$d(\alpha\alpha^{-1}p,\alpha'\alpha^{-1}p)=d(p,\alpha'\alpha^{-1}p)=|\alpha'\alpha^{-1}|>2R_B,$$
  thus $\nu_{n,\alpha'}(\alpha(E))=0$ since $\nu_{n+1,\alpha'}$ are supported on $\partial(\alpha' (B))$.

  If  $|\alpha'\alpha^{-1}|\leq 2R_B$,
  \begin{eqnarray*}
    |\alpha| &=&d(p,\alpha p)=d(\alpha^{-1}p,p)=d(\alpha'\alpha^{-1}p,\alpha'p) \\
     &\leq& d(\alpha'\alpha^{-1}p,p)+d(p,\alpha'p)=|\alpha'\alpha^{-1}|+|\alpha'| \leq 2R_B+|\alpha'|.
      \end{eqnarray*}
    Since $\{\alpha(E)\}_{\alpha\in\Gamma}$ form a partition of $\widetilde{M}$, and
    $$(\nu_{n,\alpha'}|_{\alpha(E)})^{\alpha(B)}(\partial (\alpha(B)))=\int_{x\in\alpha(B)}\mathcal{E}_x^{\alpha(B)}(\partial (\alpha(B)))(\nu_{n,\alpha'}|_{\alpha(E)})({\rm d}x)=\nu_{n,\alpha'}(\alpha(E)),$$
     by the definition of $\lambda_{n+1,\alpha}$ and $\nu_{n+1,\alpha}$, we have
    \begin{eqnarray*}
      && b_{n+1}\\
      &\triangleq&\sum_{\alpha\in\Gamma}{\rm e}^{a|\alpha|}\nu_{n+1,\alpha}(\partial(\alpha (B)))  \\
       &=& \sum_{\alpha',\alpha\in\Gamma}{\rm e}^{a|\alpha|}\left(\nu_{n,\alpha'}(\alpha (E))-\frac1C\nu_{n,\alpha'}(\alpha (E))\right) \\
       &=& \left(1-\frac1C\right)\sum_{\alpha'\in\Gamma}\left(\sum_{\alpha\in\Gamma,|\alpha'\alpha^{-1}|\leq2R}{\rm e}^{a|\alpha|}\nu_{n,\alpha'}(\alpha (E))+
              \sum_{\alpha\in\Gamma,|\alpha'\alpha^{-1}|>2R}{\rm e}^{a|\alpha|}\nu_{n,\alpha'}(\alpha (E))\right)\\
       &\leq&{\rm e}^{2aR_B}\left(1-\frac1C\right)\sum_{\alpha'\in\Gamma}{\rm e}^{a|\alpha'|}\nu_{n,\alpha'}(\partial(\alpha' (B)))\\
       &=&{\rm e}^{2aR_B}\left(1-\frac1C\right)b_n.
          \end{eqnarray*}
 Since $a<\frac{-1}{2R_B}\ln\left(1-\frac1C\right)$ implies ${\rm e}^{2aR_B}\left(1-\frac1C\right)<1$, we have $\sum_{n=0}^{\infty}b_n<\infty$.
  By the definition of $\mu$ and $\mu_p$, we have
  $$\mu(\alpha)=\mu_p(\alpha p)=\sum_{n=0}^\infty\frac1C\nu_n(\alpha(E))=\sum_{n=0}^\infty\sum_{\alpha'\in\Gamma}
  \frac1C\nu_{n,\alpha'}(\alpha(E))=\sum_{n=0}^\infty\lambda_{n+1,\alpha}(\partial(\alpha(B))),$$
  and
  $$c_{n+1}\triangleq\sum_{\alpha\in\Gamma}{\rm e}^{a|\alpha|}\lambda_{n+1,\alpha}(\partial(\alpha (B))) \leq\frac1C{\rm e}^{2aR_B}b_n.$$
  Then
  $$\sum_{\alpha\in\Gamma}{\rm e}^{a|\alpha|}\mu(\alpha)=\sum_{n=0}^{\infty}\sum_{\alpha\in\Gamma}{\rm e}^{a|\alpha|}\lambda_{n+1,\alpha}(\partial(\alpha(B)))
  \leq\frac1C{\rm e}^{2aR_B}\sum_{n=0}^{\infty}b_n<\infty.$$
  \end{proof}

\subsection{The Convergence of $\mu$-harmonic Functions on $\Gamma$}

 Let $\mu$ be the probability measure on $\Gamma$ defined above.

 \begin{Defi}
 A function $h$ on $\Gamma$ is said to be $\mu$-harmonic, if
 \begin{equation}\label{}
  h(\alpha)=\sum_{\alpha'\in\Gamma}h(\alpha\alpha')\mu(\alpha').
 \end{equation}
 \end{Defi}
  That is to say, the value of $\mu$-harmonic function equal to its mean value with respect to $\mu$.
  Let $\nu$ be a probability measure on the ideal boundary $\widetilde{M}(\infty)$, such that $\mu\ast\nu=\nu$.
For any bounded measurable function $f$ on $\widetilde{M}(\infty)$, define $h_f$ on $\Gamma$ by
\begin{equation}\label{}
  h_f(\alpha)=\int_{\xi\in\widetilde{M}(\infty)}f(\alpha \xi)\nu({\rm d}\xi)=\int_{\xi\in\widetilde{M}(\infty)}f(\xi)(\alpha_*\nu)({\rm d}\xi),
\end{equation}
where $\alpha_*\nu$ is the measure on $\widetilde{M}(\infty)$ defined by $(\alpha_*\nu)(\cdot)=\nu(\alpha^{-1}\cdot)$.

\begin{lemma}
$h_f$ is a $\mu$-harmonic function for any bounded measurable function $f$ on $\widetilde{M}(\infty)$.
\end{lemma}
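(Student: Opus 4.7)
The plan is to verify the $\mu$-harmonicity identity by directly unwinding the two definitions involved---namely the convolution $\mu\ast\nu$ and the function $h_f$---and then invoking the invariance hypothesis $\mu\ast\nu=\nu$. The computation is essentially a tautology; the only subtlety is a routine Fubini-type interchange, which is justified by boundedness of $f$ together with the fact that $\mu$ and $\nu$ are probability measures.

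First I would start from the right-hand side of the $\mu$-harmonicity identity and substitute the definition of $h_f$:
\[
\sum_{\alpha'\in\Gamma} h_f(\alpha\alpha')\,\mu(\alpha')
= \sum_{\alpha'\in\Gamma}\mu(\alpha')\int_{\widetilde{M}(\infty)} f(\alpha\alpha'\xi)\,\nu(d\xi).
\]
The interchange of summation and integration needed to write this in either order is immediate from $\|f\|_{\infty}<\infty$ and $\sum_{\alpha'}\mu(\alpha')=\nu(\widetilde{M}(\infty))=1$, so the integrand is dominated by the constant $\|f\|_{\infty}$ against a finite product measure.

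Next I would recognize the right side as the integral of the bounded measurable test function $g(\xi)\triangleq f(\alpha\xi)$ against the measure $\mu\ast\nu$, using Definition~\ref{def10}:
\[
\sum_{\alpha'\in\Gamma}\mu(\alpha')\int_{\widetilde{M}(\infty)} g(\alpha'\xi)\,\nu(d\xi)
= \int_{\widetilde{M}(\infty)} g(\xi)\,d(\mu\ast\nu)(\xi)
= \int_{\widetilde{M}(\infty)} f(\alpha\xi)\,d(\mu\ast\nu)(\xi).
\]
Finally I would apply the invariance hypothesis \eqref{eq67} to conclude
\[
\int_{\widetilde{M}(\infty)} f(\alpha\xi)\,d(\mu\ast\nu)(\xi)
= \int_{\widetilde{M}(\infty)} f(\alpha\xi)\,d\nu(\xi)
= h_f(\alpha),
\]
which is precisely the desired identity.

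I do not anticipate any genuine obstacle here: the lemma is a formal consequence of the invariance equation \eqref{eq67} combined with the definition of convolution applied to the shifted test function $\xi\mapsto f(\alpha\xi)$. The only mild technical point is the Fubini-type interchange above, and this is handled by the boundedness of $f$ and the probability-measure hypotheses on $\mu$ and $\nu$.
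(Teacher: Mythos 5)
Your proposal is correct and follows essentially the same route as the paper: substitute the definition of $h_f$, recognize the resulting sum-integral as $\int f(\alpha\xi)\,d(\mu\ast\nu)(\xi)$ via Definition~\ref{def10} applied to the shifted test function, and conclude using $\mu\ast\nu=\nu$. The only difference is cosmetic: the paper passes through an explicit change of variables $\eta=\alpha'\xi$, while you invoke the convolution definition directly, and your remark on the Fubini-type interchange is a harmless extra justification the paper leaves implicit.
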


\begin{proof}
\begin{eqnarray*}
  \sum_{\alpha'\in\Gamma}h_f(\alpha\alpha')\mu(\alpha') &=&\sum_{\alpha'\in\Gamma} \int_{\xi\in\widetilde{M}(\infty)}f(\alpha\alpha' \xi)\nu({\rm d}\xi)\mu(\alpha')\\
  &=& \sum_{\alpha'\in\Gamma} \int_{\eta\in\widetilde{M}(\infty)}f(\alpha\eta)\nu({\rm d}(\alpha'^{-1}\eta))\mu(\alpha') \\
   &=&\int_{\eta\in\widetilde{M}(\infty)}f(\alpha\eta)(\mu\ast\nu)({\rm d}\eta)\\
    &=&\int_{\eta\in\widetilde{M}(\infty)}f(\alpha\eta)\nu({\rm d}\eta)
   =h_f(\alpha).
   \end{eqnarray*}
\end{proof}

\begin{lemma}\label{corollary1.7}
Let $\{\alpha_{n_{k}}\}_{k\in \mathbb{N}^{+}} \subset \Gamma$, $\eta_{0}\in \widetilde{M}(\infty)$, and $\beta\in \Gamma_0$ are as in Proposition \ref{key},
and $f$ be a positive bounded measurable function on $\widetilde{M}(\infty)$, then
$$\liminf_{k\rightarrow \infty}h_f(\alpha_{n_{k}}\beta)\geq (1-\epsilon)\inf\left(f|_{C(c'_{p,\eta_{0}}(0),\epsilon)}\right),$$
$$\limsup_{k\rightarrow \infty}h_f(\alpha_{n_{k}}\beta)\leq \sup\left(f|_{C(c'_{p,\eta_{0}}(0),\epsilon)}\right)+\sup(f)\epsilon.$$
\end{lemma}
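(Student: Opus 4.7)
The plan is to directly substitute the geometric information provided by Proposition~\ref{key} into the definition of $h_f$ and estimate the resulting integral by splitting the domain of integration. Recall that $h_f(\alpha) = \int_{\widetilde{M}(\infty)} f(\alpha \xi)\, d\nu(\xi)$, so the task is to estimate $\int_{\widetilde{M}(\infty)} f(\alpha_{n_k}\beta \xi)\, d\nu(\xi)$ from above and below.

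First I would invoke Proposition~\ref{key} to obtain the open set $U \subset \widetilde{M}(\infty)$ such that $U \cap C(c'_{p,\eta_0}(0),\epsilon) = \emptyset$, $\nu(U) < \epsilon$, and, after passing to the subsequence $\{\alpha_{n_k}\}$,
\[
\alpha_{n_k}\beta\bigl(\widetilde{M}(\infty) - U\bigr) \subset C\bigl(c'_{p,\eta_0}(0),\epsilon\bigr).
\]
Then I would decompose
\[
h_f(\alpha_{n_k}\beta) = \int_{\widetilde{M}(\infty)-U} f(\alpha_{n_k}\beta \xi)\, d\nu(\xi) + \int_{U} f(\alpha_{n_k}\beta \xi)\, d\nu(\xi).
\]

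For the first integral, the cone inclusion forces $\alpha_{n_k}\beta\xi \in C(c'_{p,\eta_0}(0),\epsilon)$ for every $\xi \in \widetilde{M}(\infty)-U$, so the integrand is pinned between $\inf(f|_{C(c'_{p,\eta_0}(0),\epsilon)})$ and $\sup(f|_{C(c'_{p,\eta_0}(0),\epsilon)})$. For the second integral, I would use only the global bounds $0 \leq f(\alpha_{n_k}\beta \xi) \leq \sup f$, since $f$ is positive and bounded. Combining these with $\nu(\widetilde{M}(\infty)-U) \geq 1-\epsilon$ and $\nu(U) < \epsilon$ yields
\[
h_f(\alpha_{n_k}\beta) \geq (1-\epsilon)\,\inf\bigl(f|_{C(c'_{p,\eta_0}(0),\epsilon)}\bigr)
\]
and
\[
h_f(\alpha_{n_k}\beta) \leq \sup\bigl(f|_{C(c'_{p,\eta_0}(0),\epsilon)}\bigr) + \sup(f)\,\epsilon,
\]
which hold uniformly in $k$ (along the extracted subsequence) and therefore pass to $\liminf$ and $\limsup$ respectively.

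There is essentially no obstacle here beyond correctly applying Proposition~\ref{key}; the inequality is a direct bookkeeping exercise once the set $U$ has been produced. The real work has already been absorbed into Proposition~\ref{key}, which provided simultaneously the small-measure set $U$ (via Corollary~\ref{cor7}) and the push-into-the-cone estimate (via the contraction properties of the axial elements $\beta_1^{\pm N}, \beta_2^{\pm N}$). The only point that requires mild care is to notice that the positivity of $f$ is what allows us to discard the contribution on $U$ for the lower bound and replace it by $0$, rather than by $-\sup(f)\epsilon$.
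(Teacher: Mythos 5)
Your proposal is correct and follows essentially the same route as the paper: invoke Proposition~\ref{key} to get the set $U$, split the integral defining $h_f(\alpha_{n_k}\beta)$ over $\widetilde{M}(\infty)-U$ and $U$, bound the first piece by the infimum/supremum of $f$ on the cone via the inclusion $\alpha_{n_k}\beta(\widetilde{M}(\infty)-U)\subset C(c'_{p,\eta_0}(0),\epsilon)$, and bound the second piece using $0\leq f\leq \sup(f)$ together with $\nu(U)<\epsilon$. Your remark that positivity of $f$ is what lets the $U$-contribution be dropped in the lower bound is a correct and useful observation that the paper leaves implicit.
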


\begin{proof}
By Proposition \ref{key},
\begin{eqnarray*}
   h_f(\alpha_{n_{k}}\beta)&=&\int_{\xi\in\widetilde{M}(\infty)}f(\alpha_{n_{k}}\beta \xi)\nu({\rm d}\xi)  \\
   &=&  \int_{\xi\in\widetilde{M}(\infty)-U}f(\alpha_{n_{k}}\beta \xi)\nu({\rm d}\xi)+\int_{\xi\in U}f(\alpha_{n_{k}}\beta \xi)\nu({\rm d}\xi)\\
   &\geq& (1-\epsilon)\inf\left(f|_{C(c'_{p,\eta_{0}}(0),\epsilon)}\right),
\end{eqnarray*}
and
\begin{eqnarray*}
   h_f(\alpha_{n_{k}}\beta)&=&\int_{\xi\in\widetilde{M}(\infty)}f(\alpha_{n_{k}}\beta \xi)\nu({\rm d}\xi)  \\
   &=&  \int_{\xi\in\widetilde{M}(\infty)-U}f(\alpha_{n_{k}}\beta \xi)\nu({\rm d}\xi)+\int_{\xi\in U}f(\alpha_{n_{k}}\beta \xi)\nu({\rm d}\xi)\\
   &\leq&\sup\left(f|_{C(c'_{p,\eta_{0}}(0),\epsilon)}\right)+\sup(f)\epsilon.
\end{eqnarray*}
\end{proof}

\begin{theorem}\label{hfgamman}
If $[\omega]=(\omega_0,\omega_1,\omega_2,\cdots)$ is a sequence in $\Gamma^{\mathbb{N}}$ such that $\lim_{n\rightarrow\infty}\omega_np=\eta_0\in\widetilde{M}(\infty)$
for $p\in\widetilde{M}$. Then $\lim_{n\rightarrow\infty}h_f(\omega_n)=f(\eta_0)$
for any bounded measurable function $f$ on $\widetilde{M}(\infty)$ which is continuous at $\eta_0$.
\end{theorem}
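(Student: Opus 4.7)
The plan is to deduce the theorem from the stronger weak convergence statement $(\omega_n)_\ast\nu\to\delta_{\eta_0}$ on $\widetilde{M}(\infty)$. Once this holds, for any bounded measurable $f$ continuous at $\eta_0$ we split $h_f(\omega_n)=\int f\,d((\omega_n)_\ast\nu)$ into the integrals over a small cone neighbourhood of $\eta_0$ (on which $|f-f(\eta_0)|<\delta$ by continuity at $\eta_0$) and its complement (whose $(\omega_n)_\ast\nu$-mass vanishes in the limit), which yields $h_f(\omega_n)\to f(\eta_0)$.

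To prove the weak convergence, I would use the sub-subsequence principle together with Prokhorov compactness of probability measures on the compact space $\widetilde{M}(\infty)$: it suffices to show that every weak accumulation point $\nu_\infty$ of $((\omega_n)_\ast\nu)$ equals $\delta_{\eta_0}$. Fix such a $\nu_\infty$ realised along some subsequence; for each $\epsilon>0$, apply Proposition~\ref{key} to extract a further subsequence $(\omega_{n_k})$ and an element $\beta\in\Gamma_0$ (with $\Gamma_0=\Gamma_0(\epsilon)$ finite) satisfying $\omega_{n_k}\beta(\widetilde{M}(\infty)-U)\subset C(c'_{p,\eta_0}(0),\epsilon)$ with $\nu(U)<\epsilon$. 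Lemma~\ref{corollary1.7} then guarantees that for every nonnegative continuous $g$ with $g(\eta_0)=0$,
\begin{equation*}
\limsup_{k\to\infty}h_g(\omega_{n_k}\beta)\leq \sup g|_{C(c'_{p,\eta_0}(0),\epsilon)}+\epsilon\,\sup g,
\end{equation*}
which can be made arbitrarily small by letting $\epsilon\to 0$ and using continuity of $g$ at $\eta_0$.

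The crucial bridge from $h_g(\omega_{n_k}\beta)$ to $h_g(\omega_{n_k})$ comes from the $\mu$-stationarity $\nu=\mu\ast\nu$. Pushing this identity forward by $\beta$, one obtains $\beta_\ast\nu=\sum_{\gamma\in\Gamma}\mu(\gamma)(\beta\gamma)_\ast\nu$, and isolating the single term with $\gamma=\beta^{-1}$ yields the pointwise measure inequality $\beta_\ast\nu\geq\mu(\beta^{-1})\,\nu$, or equivalently $\nu\leq\mu(\beta^{-1})^{-1}\beta_\ast\nu$. Integrating the nonnegative function $g\circ\omega_{n_k}$ against both sides and applying the change of variables $\eta=\beta\xi$ produces the key comparison
\begin{equation*}
h_g(\omega_{n_k})\;=\;\int g(\omega_{n_k}\eta)\,d\nu(\eta)\;\leq\;\frac{1}{\mu(\beta^{-1})}\int g(\omega_{n_k}\beta\xi)\,d\nu(\xi)\;=\;\frac{h_g(\omega_{n_k}\beta)}{\mu(\beta^{-1})}.
\end{equation*}
For test functions $g$ supported in a closed set $W$ disjoint from $\eta_0$ I choose $\epsilon$ small enough that $C(c'_{p,\eta_0}(0),\epsilon)\cap W=\emptyset$, so $\sup g|_C=0$ and the right-hand side collapses to $\epsilon\sup g/\mu(\beta^{-1})$. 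Passing to $k\to\infty$ gives $\nu_\infty(g)\leq \epsilon\sup g/\mu(\beta^{-1})$, and letting $g$ range over a dense family of such test functions forces $\nu_\infty(\widetilde{M}(\infty)-\{\eta_0\})=0$, so $\nu_\infty=\delta_{\eta_0}$.

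The hard part will be the quantitative calibration of $\epsilon$ against $\mu(\beta^{-1})$: the element $\beta\in\Gamma_0(\epsilon)$ is drawn from a finite set whose elements are powers $\beta_i^{\pm N(\epsilon)}$ of fixed axial isometries, and $N(\epsilon)\to\infty$ as $\epsilon\to 0$, so by Lemma~\ref{finitemoment} the mass $\mu(\beta^{-1})$ decays. One must therefore ensure that $\epsilon/\mu(\beta^{-1})\to 0$ along a suitably chosen subsequence, which I would accomplish by a pigeonhole argument on the finite set $\Gamma_0(\epsilon)$: a single $\beta$ recurs along infinitely many indices $k$ for fixed $\epsilon$, and combining the angular estimates from Lemma~7.3 of \cite{LLW} (used in the proof of Proposition~\ref{key} to govern $V_i^{\pm}$ and hence $N(\epsilon)$) with the exponential moment bound should produce a sequence $\epsilon_m\to 0$ along which the ratio $\epsilon_m/\mu(\beta_m^{-1})$ tends to zero.
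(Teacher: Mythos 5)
Your reduction of the theorem to the weak convergence $(\omega_n)_\ast\nu\to\delta_{\eta_0}$ is fine, and the comparison $h_g(\omega_{n_k})\le h_g(\omega_{n_k}\beta)/\mu(\beta^{-1})$ obtained by isolating the $\gamma=\beta^{-1}$ term of $\nu=\mu\ast\nu$ is algebraically correct (and $\mu(\beta^{-1})>0$ does hold for the Lyons--Sullivan measure). But the step you yourself flag as ``the hard part'' is a genuine gap, and the fix you sketch does not work. The bound you end with is $\nu_\infty(g)\le\epsilon\,\sup g/\mu(\beta(\epsilon)^{-1})$, where $\beta(\epsilon)\in\Gamma_0(\epsilon)=\{\beta_1^{\pm N(\epsilon)},\beta_2^{\pm N(\epsilon)}\}$ and $N(\epsilon)\to\infty$ as $\epsilon\to0$. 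Nothing in the paper gives a \emph{lower} bound on $\mu(\beta_i^{\pm N})$; on the contrary, Lemma~\ref{finitemoment} (finite exponential moment) forces $\mu(\beta_i^{\pm N})$ to decay at least exponentially in $N$, i.e.\ it works against you, and the dependence $\epsilon\mapsto N(\epsilon)$ coming from the choices of $U_i^{\pm}, V_i^{\pm}$ in the proof of Proposition~\ref{key} is completely unquantified. A pigeonhole on the finite set $\Gamma_0(\epsilon)$ only fixes $\beta$ for a \emph{fixed} $\epsilon$; it cannot make $\epsilon_m/\mu(\beta_m^{-1})\to0$ across a sequence $\epsilon_m\to0$. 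So the weak convergence is not established by your argument as it stands.

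The paper's proof is structured precisely so that the small weight attached to $\beta$ cancels rather than having to be beaten by $\epsilon$. It argues by contradiction with the extremal excess $\epsilon_1=\sup\{\epsilon_0:\ \exists\,\omega_np\to\eta_0,\ h_f(\omega_n)\to f(\eta_0)+\epsilon_0\}$, fixes $\epsilon$ small depending only on $\epsilon_1$, $f(\eta_0)$ and $\sup f$ (before any $\beta$ is chosen), and then uses $\mu^k$-harmonicity to write $h_f(\omega_n)=\sum_{\alpha'}h_f(\omega_n\alpha')\mu^k(\alpha')$ as a full convex combination: the single term $\alpha'=\beta$ is controlled by Lemma~\ref{corollary1.7}, while every other term is controlled by the extremality of $\epsilon_1$ (since $\omega_n\alpha'p\to\eta_0$ as well). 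In the resulting inequality
\begin{equation*}
f(\eta_0)+\epsilon_1\le (f(\eta_0)+\epsilon_1)(1-a_\beta)+a_\beta\Bigl(f(\eta_0)+\tfrac{\epsilon_1}{2}+\sup(f)\,\epsilon\Bigr),
\end{equation*}
the factor $a_\beta=\mu^k(\beta)>0$ divides out, so its size is irrelevant and no calibration of $\epsilon$ against $\mu(\beta^{-1})$ is ever needed. By contrast, your inequality discards all of the stationarity identity except the one term $\gamma=\beta^{-1}$, which is exactly what creates the uncontrollable factor $1/\mu(\beta^{-1})$. To repair your approach you would essentially have to reinstate the full convex combination and some extremal/self-comparison device, at which point you are reproducing the paper's argument.
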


\begin{proof}
Define the probability measure $\mu^k$ on $\Gamma$ inductively by
$$\mu^k(\alpha)=\sum_{\alpha'\in\Gamma}\mu^{k-1}(\alpha')\mu(\alpha'^{-1}\alpha), k=1,2,\cdots, \mu_0=\mu.$$
If $h$ is $\mu$-harmonic, inductively we can obtain that $h$ is $\mu^k$-harmonic for all $k\in\mathbb{N}.$ In fact, if we assume that $h$
is $\mu^{k-1}$-harmonic, then
\begin{eqnarray*}
  h(\alpha_0) &=& \sum_{\alpha\in\Gamma}h(\alpha_0\alpha)\mu^{k-1}(\alpha)=\sum_{\alpha',\alpha\in\Gamma}h(\alpha_0\alpha\alpha')\mu(\alpha')\mu^{k-1}(\alpha) \\
   &=&\sum_{\alpha'',\alpha\in\Gamma}h(\alpha_0\alpha'')\mu(\alpha^{-1}\alpha'')\mu^{k-1}(\alpha)  \\
   &=& \sum_{\alpha''\in\Gamma}h(\alpha_0\alpha'')\sum_{\alpha\in\Gamma}\mu(\alpha^{-1}\alpha'')\mu^{k-1}(\alpha)  \\
   &=&\sum_{\alpha''\in\Gamma}h(\alpha_0\alpha'')\mu^{k}(\alpha''),\forall \alpha_0\in\Gamma.
\end{eqnarray*}
 For any $\alpha\in\Gamma$, there exists $k\in\mathbb{N}$ such that $\mu^k(\alpha)>0$ since the support of $\mu$ generates $\Gamma$.

Without loss of generality, we assume $f>0$. Suppose now that $(\omega_0,\omega_1,\omega_2,\cdots)$ is a sequence in $\Gamma$
such that $\lim_{n\rightarrow\infty}\omega_np=\eta_0$ but $\lim_{n\rightarrow\infty}h_f(\omega_n)=f(\eta_0)+\epsilon_0$ for some $\epsilon_0>0.$
For given $\eta_0$, let
$$\epsilon_1=\sup\{\epsilon_0>0~|~\exists[\omega]=(\omega_0,\omega_1,\cdots)\in\Gamma^{\mathbb{N}},\omega_np\rightarrow\eta_0, \mbox{but}~
h_f(\omega_n)\rightarrow  f(\eta_0)+\epsilon_0\}.$$
Choose $[\omega]=(\omega_0,\omega_1,\cdots)\in\Gamma^{\mathbb{N}}$ such that
$\lim_{n\rightarrow\infty}h_f(\omega_n)=f(\eta_0)+\epsilon_1$. Since $\eta_0$ is a continuous point of $f$, we can choose $\epsilon$ sufficiently small such that
\begin{equation}\label{equationiii}
  f(\eta_0)+\epsilon_1>(1+\epsilon)\big(f(\eta_0)+\frac{\epsilon_1}{2}\big)+\sup(f)\epsilon,
\end{equation}
and
\begin{equation}\label{equationiv}
  \sup\left(f|_{C(c'_{p,\eta_{0}}(0),\epsilon)}\right)\leq f(\eta_0)+\frac{\epsilon_1}{2}.
\end{equation}
By the choice of $\epsilon_1$, we have
\begin{equation}\label{equationvi}
  \limsup h_f(\omega_n\alpha')\leq f(\eta_0)+\epsilon_1,\forall \alpha'\in\Gamma,
\end{equation}
since for any $\alpha'\in\Gamma$, it holds $\omega_n\alpha'p\rightarrow \eta_0.$

Choose $\beta$ as in Lemma \ref{corollary1.7}. Then there exists $k\in\mathbb{N}$ such that $\mu^k(\beta)\triangleq a_\beta>0.$
Since $h_f$ is a $\mu$-harmonic function, it is also $\mu^k$-harmonic. Thus for all $n$,
\begin{eqnarray*}
h_f(\omega_n)&=&\sum_{\alpha'\in\Gamma}h_f(\omega_n\alpha')\mu^k(\alpha')\\
&=&\sum_{\alpha'\in\Gamma-\{\beta\}}h_f(\omega_n\alpha')\mu^k(\alpha')+h_f(\omega_n\beta)\mu^k(\beta).
\end{eqnarray*}
Let $n\rightarrow \infty$, by (\ref{equationiv}), (\ref{equationvi}) and Lemma \ref{corollary1.7}, we obtain
\begin{eqnarray}\label{}
  f(\eta_0)+\epsilon_1&\leq& (f(\eta_0)+\epsilon_1)(1-a_\beta) +a_\beta\left(\sup\left(f|_{C(c'_{p,\eta_{0}}(0),\epsilon)}\right)+\sup(f)\epsilon\right)\nonumber\\
  &\leq&(f(\eta_0)+\epsilon_1)(1-a_\beta) +a_\beta\left(f(\eta_0)+\frac{\epsilon_1}{2}+\sup(f)\epsilon\right).
 \end{eqnarray}
That is
\begin{equation}\label{}
   f(\eta_0)+\epsilon_1\leq f(\eta_0)+\frac{\epsilon_1}{2}+\sup(f)\epsilon.
\end{equation}
By (\ref{equationiii}), we have
$$
(1+\epsilon)\big(f(\eta_0)+\frac{\epsilon_1}{2}\big)+\sup(f)\epsilon <f(\eta_0)+\epsilon_1\leq
f(\eta_0)+\frac{\epsilon_1}{2}+\sup(f)\epsilon,$$
That is $1+\epsilon<1$, which contradicts $\epsilon>0.$
Hence we have $\limsup_{n\rightarrow\infty}h_f(\omega_n)\leq f(\eta_0)$ for any sequence which satisfies $\omega_np\rightarrow \eta_0$.
Similarly, we can also obtain $\liminf_{n\rightarrow\infty}h_f(\omega_n)\geq f(\eta_0)$ for any sequence which satisfies $\omega_np\rightarrow \eta_0$,
and the theorem follows.
\end{proof}

\subsection{The Convergence of Random Walk on $\Gamma$}\label{randomwalk}

Choose a fixed point $p\in \widetilde{M}$ and let $\mu$ be the probability measure on $\Gamma$ defined in Definition \ref{defmup}. We now define the left-invariant random walk on $\Gamma$.
Let sample space $\Omega=\Gamma^{\mathbb{N}}$ with elements $[\omega]=(\omega_0,\omega_1,\omega_2,\cdots),\omega_0=e.$
The probability measure $Q$ on $\Omega$ is defined by $\mu$ as follows: For given $\alpha_1,\alpha_2,\cdots,\alpha_k\in\Gamma,$
\begin{equation}\label{}
  Q\{[\omega]|\omega_1=\alpha_1,\omega_2=\alpha_2,\cdots,\omega_k=\alpha_k\}=\mu(\alpha_1)\mu(\alpha_2)\cdots\mu(\alpha_k)  .
\end{equation}

Define $\Gamma$-valued random variable sequence on $\Omega$ by
\begin{equation}\label{gammakomegaa}
  \gamma_k([\omega])=\gamma_k((\omega_0,\omega_1,\omega_2,\cdots))=\omega_{0}\omega_1\cdots\omega_{k-1},k=1,2,\cdots,
\end{equation}
then $[\gamma]=(\gamma_0,\gamma_1,\gamma_2\cdots)$ is a random walk on $\Gamma$ with transition probability from $\alpha\in\Gamma$ to $\beta\in\Gamma$
\begin{equation}\label{}
  Q\{\gamma_{k+1}=\beta|\gamma_k=\alpha\}=\mu(\alpha^{-1}\beta).
\end{equation}
The random walk on $\Gamma$ is transient since the support of $\mu$ generate $\Gamma$ (cf. \cite{Fu1}). Thus
\begin{equation}\label{gammakrightarrowinfty}
  |\gamma_k([\omega])|\triangleq d(p,\gamma_k([w])p)\rightarrow \infty, {\rm a.s.}~ [\omega] \in\Omega.
\end{equation}

\begin{lemma}\label{martingale}
For any bounded $\mu$-harmonic function $h$ on $\Gamma$, $M_k([\omega])=h(\gamma_k([\omega])),k=1,2,\cdots$ defines a martingale.
\end{lemma}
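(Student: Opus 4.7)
The plan is to verify the three defining conditions of a martingale for $M_k=h(\gamma_k)$ with respect to the natural filtration on $\Omega=\Gamma^{\mathbb{N}}$. Let $\mathcal{F}_k$ denote the $\sigma$-algebra generated by the coordinate maps $\omega_0,\omega_1,\ldots,\omega_{k-1}$. Since $\gamma_k=\omega_0\omega_1\cdots\omega_{k-1}$ is a function of these coordinates alone, $M_k$ is automatically $\mathcal{F}_k$-measurable, and boundedness of $h$ gives $\sup_k\|M_k\|_\infty<\infty$, so in particular $M_k\in L^1(\Omega,Q)$ for every $k$.

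The only substantive step is the martingale identity $E[M_{k+1}\mid\mathcal{F}_k]=M_k$. Writing $\gamma_{k+1}=\gamma_k\omega_k$, I would exploit the product structure of $Q$: under $Q$ the coordinates $\omega_0,\omega_1,\omega_2,\ldots$ are independent with $\omega_k$ distributed according to $\mu$, and in particular $\omega_k$ is independent of $\mathcal{F}_k$ while $\gamma_k$ is $\mathcal{F}_k$-measurable. Therefore
\begin{equation*}
E\big[h(\gamma_{k+1})\mid\mathcal{F}_k\big]=E\big[h(\gamma_k\omega_k)\mid\mathcal{F}_k\big]=\sum_{\alpha'\in\Gamma}h(\gamma_k\alpha')\,\mu(\alpha').
\end{equation*}
Applying the definition of $\mu$-harmonicity to the right-hand side collapses the sum to $h(\gamma_k)=M_k$, finishing the verification.

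There is no real geometric obstacle here; the only thing to be careful about is the bookkeeping that $\mathcal{F}_k$ is exactly the $\sigma$-algebra generated by $\omega_0,\ldots,\omega_{k-1}$ (so that $\gamma_k$ is $\mathcal{F}_k$-measurable while $\omega_k$ remains independent of $\mathcal{F}_k$), and the interchange of the conditional expectation with the countable sum, which is justified by Fubini/Tonelli together with the bound $|h(\gamma_k\alpha')|\leq\|h\|_\infty$ and $\sum_{\alpha'}\mu(\alpha')=1$. Once these routine points are laid out, the $\mu$-harmonicity hypothesis on $h$ delivers the martingale property immediately.
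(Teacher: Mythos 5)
Your proof is correct and takes essentially the same route as the paper: write $\gamma_{k+1}=\gamma_k\omega_k$, use that the increment is distributed according to $\mu$ and independent of the past (the paper phrases this via the transition probability $Q\{\gamma_{k+1}=\alpha\mid\gamma_k\}=\mu(\gamma_k^{-1}\alpha)$), and invoke $\mu$-harmonicity to collapse $\sum_{\alpha'}h(\gamma_k\alpha')\mu(\alpha')$ to $h(\gamma_k)$. Your explicit use of the coordinate filtration $\mathcal{F}_k$ is merely a more careful formulation of the same computation the paper performs by conditioning on $\gamma_k$.
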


\begin{proof}
The integrability of $M_k$ follows since $h$ is bounded. And because $h$ is $\mu$-harmonic, we have
\begin{eqnarray*}
  E(M_{k+1}|M_k) &=&E(h(\gamma_{k+1})|\gamma_k) \\
   &=& \sum_{\alpha\in\Gamma} Q(\gamma_{k+1}=\alpha|\gamma_k)h(\alpha)\\
   &=& \sum_{\alpha\in\Gamma} h(\alpha)\mu(\gamma_{k}^{-1}\alpha)\\
   &=& \sum_{\alpha'\in\Gamma} h(\gamma_k\alpha')\mu(\alpha')=h(\gamma_k)=M_k.
\end{eqnarray*}
Thus  $\{M_k([\omega])\}$ is a martingale.
\end{proof}

\begin{theorem}\label{gammanp}
Choose a fixed point $p\in \widetilde{M}$. Let $\mu$ be the probability measure on $\Gamma$ defined in Definition \ref{defmup}
 and $\nu$ be the  probability measure on $\widetilde{M}(\infty)$ as in the Definition \ref{def10} which satisfies (\ref{eq67}),
 then for $\mu^{\otimes\mathbb{N}}$-a.e. $[\omega]\in\Omega$, there exists a unique $\xi([\omega])\in\widetilde{M}(\infty)$, such that
  $$\lim_{n\rightarrow\infty}\gamma_n([\omega])p=\xi([\omega]).$$
Furthermore, the hitting probability equals to $\nu$, therefore $\nu$ is uniquely determined.
\end{theorem}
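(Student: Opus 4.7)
The plan is to combine bounded martingale convergence (Lemma~\ref{martingale}) with the boundary convergence statement of Theorem~\ref{hfgamman} to force $\gamma_n([\omega])p$ to converge to a unique boundary point, and then to identify the law of that limit by evaluating the associated martingale at its starting point.

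First I would note that by transience (see \eqref{gammakrightarrowinfty}), almost every trajectory $\{\gamma_n([\omega])p\}$ escapes every compact subset of $\widetilde{M}$, so all its accumulation points in $\overline{\widetilde{M}}$ lie on the compact ideal boundary $\widetilde{M}(\infty)$. Since the cone topology makes $\widetilde{M}(\infty)$ compact metrizable, the space $C(\widetilde{M}(\infty))$ is separable; fix a countable point-separating family $\{f_j\}_{j\in\mathbb{N}}\subset C(\widetilde{M}(\infty))$. For each $j$, the function $h_{f_j}$ is bounded and $\mu$-harmonic, so by Lemma~\ref{martingale} the process $M_n^{(j)}([\omega])=h_{f_j}(\gamma_n([\omega]))$ is a bounded martingale on $(\Omega,Q)$ and hence converges $Q$-almost surely. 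Intersecting these null sets with the transience event yields a full-measure $\Omega^{\ast}\subset\Omega$ on which simultaneously $|\gamma_n|\to\infty$ and every $M_n^{(j)}$ converges. On $\Omega^{\ast}$, if $\eta$ is any subsequential limit of $\gamma_n([\omega])p$ in $\widetilde{M}(\infty)$, then Theorem~\ref{hfgamman} applied to each $f_j$ gives $f_j(\eta)=\lim_n M_n^{(j)}([\omega])$; since the right-hand side is independent of the subsequence, any two accumulation points $\eta,\eta'$ agree on the separating family $\{f_j\}$ and must coincide. This defines $\xi([\omega]):=\lim_{n\to\infty}\gamma_n([\omega])p\in\widetilde{M}(\infty)$ on $\Omega^{\ast}$.

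To identify the hitting law, fix any $f\in C(\widetilde{M}(\infty))$ and set $M_n^f=h_f(\gamma_n)$. Since $M_n^f$ is a martingale with $E(M_n^f)$ constant in $n$ and equal to $h_f(e)=\int_{\widetilde{M}(\infty)}f\,d\nu$, and since $|M_n^f|\leq\|f\|_\infty$ and $M_n^f\to f(\xi([\omega]))$ almost surely by Theorem~\ref{hfgamman}, bounded convergence gives
\begin{equation*}
\int_{\widetilde{M}(\infty)} f\,d\nu \;=\; E\bigl(f(\xi)\bigr) \;=\; \int_{\widetilde{M}(\infty)} f\,d(\xi_{\ast}Q).
\end{equation*}
Since $f\in C(\widetilde{M}(\infty))$ was arbitrary, the push-forward law $\xi_{\ast}Q$ equals $\nu$. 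Uniqueness of $\nu$ follows immediately: any $\mu$-stationary probability measure on $\widetilde{M}(\infty)$ must by the same argument equal the hitting distribution $\xi_{\ast}Q$, which depends only on $\mu$ and the random walk, not on $\nu$.

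The heavy geometric work has already been absorbed into Theorem~\ref{hfgamman}; given that, the present theorem reduces to a routine bounded-martingale argument. The one subtlety is the use of a \emph{countable} point-separating family $\{f_j\}$ rather than arbitrary continuous $f$: this is exactly what allows the $f_j$-indexed exceptional null sets to be unioned into a single $Q$-null set on whose complement all subsequential boundary limits of $\gamma_n([\omega])p$ are simultaneously forced to coincide.
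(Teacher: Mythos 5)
Your proposal is correct and follows essentially the same route as the paper: transience pushes accumulation points to $\widetilde{M}(\infty)$, bounded martingale convergence via Lemma~\ref{martingale} combined with Theorem~\ref{hfgamman} forces all boundary cluster points to coincide, and the constant expectation $E\bigl(h_f(\gamma_n)\bigr)=h_f(e)=\int f\,d\nu$ together with bounded convergence identifies the hitting law with $\nu$. The only cosmetic difference is that you reduce to countably many null sets via a countable point-separating family in $C(\widetilde{M}(\infty))$, whereas the paper uses a countable family of pairs of disjoint boundary balls with sign-separating continuous functions; the mechanism is the same.
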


\begin{proof}
By (\ref{gammakrightarrowinfty}), for $\mu^{\otimes\mathbb{N}}$-a.e. $[\omega]\in\Omega$,
there exists a subsequence of $\{\gamma_n([\omega])p\}$ converges to a point in $\widetilde{M}(\infty)$. We will prove that
all the subsequences of $\{\gamma_n([\omega])p\}$ converge to a same point in $\widetilde{M}(\infty)$.
Thus $\{\gamma_n([\omega])p\}$ is convergent.

Denote by $B(\xi,\varepsilon)=C\big(c'_{p,\xi}(0),\varepsilon\big)\cap\widetilde{M}(\infty),\xi\in\widetilde{M}(\infty).$
For a given $[\omega]\in\Omega,$ if there exist subsequences $\gamma_{k'}([\omega])p\rightarrow\xi\in\widetilde{M}(\infty)$ and
$\gamma_{k''}([\omega])p\rightarrow\eta\in\widetilde{M}$, $\xi\neq\eta.$
Choose sufficiently small $\varepsilon>0$ such that $B(\xi,\varepsilon)\cap B(\eta,\varepsilon)=\emptyset.$
There is a bounded continuous function $f$ on $\widetilde{M}(\infty)$ which is negative on $B(\xi,\varepsilon)$, positive on $B(\eta,\varepsilon),$
and $0$ otherwise.
 Then by Theorem \ref{hfgamman}, $h_f(\gamma_{k'}([\omega]))\rightarrow f(\xi)<0$ and $h_f(\gamma_{k''}([\omega]))\rightarrow f(\eta)>0.$
However, by martingale convergence theorem and Lemma \ref{martingale}, $h_f(\gamma_k([\omega]))$ converges almost surely. Thus
\begin{equation}\label{}
  Q\big\{[\omega]|\gamma_kp~\mbox{has clusterpoints in both} ~B(\xi,\varepsilon) \mbox{and} ~B(\eta,\varepsilon)\big\}=0.
\end{equation}

We can choose countably many $\xi_i\in\widetilde{M}(\infty)$ and $\varepsilon_j>0$ such that for any $\xi\neq\eta\in\widetilde{M}(\infty),$
there exist $\xi_{i_1},\xi_{i_2},\varepsilon_{j_0}$ which satisfy
$$\xi\in B(\xi_{i_1},\varepsilon_{j_0}),\eta\in B(\xi_{i_2},\varepsilon_{j_0}),\mbox{and}~B(\xi_{i_1},\varepsilon_{j_0})\cap B(\xi_{i_2},\varepsilon_{j_0})=\emptyset.$$
Hence, the probability of the set
$$\big\{[\omega]|\{\gamma_k([\omega]) p\}~\mbox{has at least two distinct cluster points in}~\widetilde{M}(\infty)\big\}$$
is
\begin{eqnarray*}
   &&Q \big\{[\omega]|\{\gamma_n([\omega]) p\}~\mbox{has at least two distinct cluster points in}~\widetilde{M}(\infty)\big\} \\
   &\leq& Q\left(\bigcup_{i_1,i_2,j}\big\{\{\gamma_k([\omega]) p\}~ \mbox{has cluster points in both} ~B(\xi_{i_1},\varepsilon_j) \mbox{and} ~B(\xi_{i_2},\varepsilon_j)\big\}
\right) \\
  &=& 0.
\end{eqnarray*}
Thus the first assertion follows.

Define $\pi_k([\omega])=\gamma_k([\omega])p$, then $\pi_k$ is measurable. Its pointwise limit
$$\pi([\omega])=\lim_{k\rightarrow\infty}\gamma_k([\omega])p$$
(in the sense of a.s.) is also measurable. For a given Borel set $\mathcal{A}\subset\widetilde{M}(\infty)$, the set
$$\{[\omega]|\lim_{k\rightarrow\infty}\gamma_k([\omega])p\in \mathcal{A}\}$$
is measurable. Then the hitting probability is given by
$$\tilde{\nu}(\mathcal{A})=Q\{[\omega]|\lim_{k\rightarrow\infty}\gamma_k([\omega])p\in \mathcal{A}\}.$$

For any bounded continuous function $f$ on $\widetilde{M}(\infty)$, we have $f(\lim_{k\rightarrow\infty}\gamma_k([\omega])p)
=\lim_{k\rightarrow\infty}h_f(\gamma_k([\omega]),{\rm a.s.}$ by Theorem \ref{hfgamman}. By the definition of $h_f$, the martingale convergence theorem
and Theorem \ref{hfgamman}, we obtain
\begin{eqnarray*}
  \int_{\xi\in\widetilde{M}(\infty)}f(\xi)\tilde{\nu}({\rm d}\xi) &=&\int_{[\omega]\in\Omega}\lim_{k\rightarrow\infty}h_f(\gamma_k([\omega])Q({\rm d}[\omega])   \\
 &=& \int_{[\omega]\in\Omega}h_f(\gamma_1([\omega])Q({\rm d}[\omega])  \\
  &=& h_f(e) \int_{[\omega]\in\Omega}Q({\rm d}[\omega])\\
  &=& \int_{\xi\in\widetilde{M}(\infty)}f(e\xi){\nu}({\rm d}\xi) =\int_{\xi\in\widetilde{M}(\infty)}f(\xi){\nu}({\rm d}\xi).
\end{eqnarray*}
The second assertion follows.
\end{proof}

By Theorem \ref{hfgamman} and \ref{gammanp}, we can obtain the following result.
\begin{corollary}
   For any continuous function $f$ on $\widetilde{M}(\infty)$, there exists a $\mu$-harmonic function $h_f$ on $\Gamma$,
  such that for $\mu^{\otimes \mathbb{N}}$-a.e. $[\omega]\in\Omega$, we have
  $$f(\lim_{k\rightarrow\infty}\gamma_k([\omega])p)
=\lim_{k\rightarrow\infty}h_f(\gamma_k([\omega]).$$
\end{corollary}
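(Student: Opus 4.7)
The plan is to realize this corollary as an essentially bookkeeping synthesis of Theorems~\ref{hfgamman} and~\ref{gammanp}; the $\mu$-harmonic extension $h_f$ will be precisely the one introduced in Subsection~\ref{randomwalk}, and the probability measure $\nu$ will be the one fixed there satisfying $\mu\ast\nu=\nu$ (whose existence was already recorded before Lemma~\ref{lem7}).

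First I would set
$$ h_f(\alpha) \;:=\; \int_{\widetilde{M}(\infty)} f(\alpha\xi)\,d\nu(\xi). $$
Since $\widetilde{M}(\infty)$ is compact in the cone topology (it is homeomorphic to $\mathbb{S}^{n-1}$, as recalled in Section~\ref{geometry}), the continuous function $f$ is bounded, so $h_f$ is a well-defined bounded function on $\Gamma$. The lemma preceding Lemma~\ref{corollary1.7} immediately gives that $h_f$ is $\mu$-harmonic, producing the function required by the statement.

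Next I would apply Theorem~\ref{gammanp} to extract a full $Q$-measure set $\Omega_{0}\subset\Omega$ on which the limit
$$ \xi([\omega]) \;:=\; \lim_{k\to\infty}\gamma_k([\omega])\,p \;\in\; \widetilde{M}(\infty) $$
exists in the cone topology. Then, fixing $[\omega]\in\Omega_{0}$, I would invoke Theorem~\ref{hfgamman} with $\omega_n:=\gamma_n([\omega])$ and $\eta_0:=\xi([\omega])$: because $f$ is continuous everywhere, it is continuous at $\eta_0$ in particular, so Theorem~\ref{hfgamman} concludes
$$ \lim_{k\to\infty} h_f\bigl(\gamma_k([\omega])\bigr) \;=\; f\bigl(\xi([\omega])\bigr) \;=\; f\Bigl(\lim_{k\to\infty}\gamma_k([\omega])\,p\Bigr), $$
which is exactly the claimed identity on a set of full $\mu^{\otimes\mathbb{N}}$-measure.

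I do not expect any genuine obstacle here: the substantive geometric and probabilistic work has already been absorbed into Theorems~\ref{hfgamman} and~\ref{gammanp} (and, upstream, into Proposition~\ref{key} and the martingale argument of Theorem~\ref{gammanp}), and once both are available the deduction is purely formal. The only tiny checks are that continuity of $f$ on the compact boundary really does deliver the boundedness needed to apply the earlier $\mu$-harmonicity lemma, and that the pointwise continuity hypothesis in Theorem~\ref{hfgamman} is satisfied at every boundary point $\xi([\omega])$; both are immediate.
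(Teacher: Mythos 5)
Your proposal is correct and is essentially the paper's own argument: the paper states the corollary as an immediate consequence of Theorems~\ref{hfgamman} and~\ref{gammanp}, with $h_f$ the bounded $\mu$-harmonic function built from $\nu$ exactly as you do. No gaps.
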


\subsection{The Dirichlet Problem at $\widetilde{M}(\infty)$}

Choose a fixed point $p\in \widetilde{M}$ and let $\mu$ be the probability measure on $\Gamma$ defined in Definition \ref{defmup},
that is,
 $\mu(\alpha)=\mu_p(\alpha)=\tilde{\mu}_p(\alpha p),\forall \alpha \in \Gamma.$

 If $h$ is a positive harmonic function on $\widetilde{M}$, let
 $\tilde{h}(\alpha)=h(\alpha p),\forall \alpha\in\Gamma.$ Then by the definition of $\mu_p$, we have
 \begin{eqnarray*}
 \tilde{h}(\alpha)&=&h(\alpha p)=\sum_{\alpha'p\in\Gamma(p)}h(\alpha'p)\mu_{\alpha p}(\alpha'p)\\
 &=&\sum_{\alpha''\in\Gamma}h(\alpha\alpha''p)\mu_{\alpha p}(\alpha\alpha''p)\\
 &=&\sum_{\alpha''\in\Gamma}\tilde{h}(\alpha\alpha'')\mu_{ p}(\alpha''p)\\
 &=&\sum_{\alpha''\in\Gamma}\tilde{h}(\alpha\alpha'')\mu(\alpha'').
 \end{eqnarray*}
Thus $\tilde{h}$ is a $\mu$-harmonic function on $\Gamma$.

If vice versa, for any $\mu$-harmonic function $\tilde{h}$ on $\Gamma$, there exists a harmonic function $h$ on $\widetilde{M}$ such that
$\tilde{h}(\alpha)=h(\alpha p),\forall \alpha\in\Gamma.$ Then for any continuous function $f$ on $\widetilde{M}(\infty)$, $h_f$
is a $\mu$-harmonic function on $\Gamma,$ there exists a harmonic function $h$ on $\widetilde{M}$, such that
$h_f(\alpha)=h(\alpha p),\forall \alpha\in\Gamma.$
By Theorem \ref{hfgamman} and \ref{gammanp}, if $\gamma_np\rightarrow \xi\in\widetilde{M}(\infty)$,
we have $h(\gamma_np)=h_f(\gamma_n)\rightarrow f(\xi).$
However, the existence of a correspondence between $\mu$-harmonic functions on $\Gamma$ and harmonic functions on $\widetilde{M}$ is quite unclear.
 In the following, we show that the Brownian path converges at $\widetilde{M}(\infty)$, and the hitting probability is given by the hitting probability of the
 random walk on $\Gamma$ defined in Subsection \ref{randomwalk}.

Denote by $\Omega$ the space of all sequences on $\Gamma$ and by $W$ the space of all $\widetilde{M}$-valued continuous functions, that is,
$$W=\{c|c:(0,\infty)\rightarrow \widetilde{M}~\mbox{is continuous.}\}$$
Denote by $P$ the probability measure on $W$ associated to Brownian motion starting at $p$ and by $Q$ the probability measure on $\Omega=\Gamma^{\mathbb{N}}$
associated to the $\mu$ random walk on $\Gamma$.
Then there is a probability measure $\overline{P}$ on $W\times\Omega$ such that the following properties hold (cf. \cite{LS}).
\begin{itemize}
  \item (i) The natural projection $W\times\Omega\rightarrow W$ maps $\overline{P}$ onto $P$.
  \item (ii) There exists a map $W\times\Omega\rightarrow\Gamma^{\mathbb{N}}:(c,w)\rightarrow[\gamma(c,w)]=(\gamma_0(c,w),\gamma_1(c,w),\gamma_2(c,w),\cdots)$
  which maps $\overline{P}$ onto $Q$.
  \item (iii) There exists an increasing sequence of stopping times $T_n$ on $W\time\Omega$ and a constant $0<\delta<1$ with the following property:
  $$\overline{P}\{(c,w)|\max_{T_n<t<T_{n+1}}d(\gamma_n(c,w)p,c(t))>k\}\leq\delta^k.$$
\end{itemize}

\begin{lemma}
  For almost any $(c,w)\in W\times \Omega$, we have
  \begin{equation}\label{limsup}
  \limsup_{n\rightarrow\infty}\frac{1}{n}\max_{T_n<t<T_{n+1}}d(\gamma_n(c,w)p,c(t))=0.
  \end{equation}
\end{lemma}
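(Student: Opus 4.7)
The plan is to convert the exponential tail bound in property (iii) into an almost-sure sublinear bound via the Borel--Cantelli lemma. Set
$$
X_n(c,w) \triangleq \max_{T_n < t < T_{n+1}} d(\gamma_n(c,w)p,\,c(t)),
$$
so property (iii) reads $\overline{P}\{X_n > k\} \leq \delta^k$ for every integer $k \geq 0$ and every $n$. The goal is to show $X_n/n \to 0$ almost surely.

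Fix an arbitrary $\varepsilon > 0$. Then
$$
\overline{P}\{X_n > \varepsilon n\} \leq \overline{P}\{X_n > \lfloor \varepsilon n \rfloor\} \leq \delta^{\lfloor \varepsilon n \rfloor},
$$
and $\sum_{n=1}^\infty \delta^{\lfloor \varepsilon n \rfloor} < \infty$ since $\delta < 1$. By the first Borel--Cantelli lemma, the event $\{X_n > \varepsilon n\}$ occurs only finitely often, so $\limsup_{n\to\infty} X_n/n \leq \varepsilon$ almost surely. Taking a countable sequence $\varepsilon_k \downarrow 0$ and intersecting the corresponding full-measure events yields $\limsup_{n\to\infty} X_n/n = 0$ $\overline{P}$-almost surely, which is exactly \eqref{limsup}.

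This really is the whole argument; there is no serious obstacle. The only point worth checking carefully is that the exponent $\lfloor \varepsilon n \rfloor$ is at least $1$ for $n$ large enough, so the geometric series really does converge, and that the null set we discard is a countable union over $\varepsilon_k$ and hence still a null set. No manifold geometry enters here; the lemma is a soft consequence of the Lyons--Sullivan coupling property (iii) together with Borel--Cantelli, and the statement is stable under which representative of the pair $(c,w)$ one fixes since the projection to $W$ sends $\overline{P}$ onto $P$.
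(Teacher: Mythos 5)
Your argument is correct and is essentially the paper's own proof: both bound $\overline{P}\{X_n>\varepsilon n\}$ by a geometric term via property (iii), apply the first Borel--Cantelli lemma for each fixed $\varepsilon>0$, and then let $\varepsilon$ run through a countable sequence tending to $0$. Your use of $\lfloor \varepsilon n\rfloor$ is just a slightly more careful reading of the exponential bound than the paper's $\delta^{n\varepsilon}$, but the route is the same.
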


\begin{proof}
Denote by
$$A_{n,\varepsilon}=\{(c,w)|\max_{T_n<t<T_{n+1}}d(\gamma_n(c,w)p,c(t))>n\varepsilon\},$$
then we have
$$\sum_{n=0}^\infty\overline{P}(A_{n,\varepsilon})\leq \sum_{n=0}^\infty\delta^{n\varepsilon}<\infty.$$
By Borel-Cantelli,
$$\overline{P}(\limsup_{n\rightarrow\infty}A_{n,\varepsilon})=\overline{P}\left(\limsup_{n\rightarrow\infty}\left\{\frac{1}{n}\max_{T_n<t<T_{n+1}}d(\gamma_n(c,w)p,c(t))>\varepsilon\right\}\right)=0.$$
The lemma follows since this holds for any $\varepsilon>0.$
\end{proof}

The following result will be used in the proof of the Lemma \ref{brownianwalk}.
By Lemma \ref{finitemoment} and the definition of $\mu$, it is easy to see that all the assumptions of Lemma \ref{Guivarch} are satisfied,
thus (\ref{fracgammann}) also holds for our situation.

\begin{lemma}\cite{G}\label{Guivarch}
If $\Gamma$ is a finitely generated, non-amenable group, $\mu$ is a probability measure on $\Gamma$ with finite first moment and the support of $\mu$
generates $\Gamma$. Then there exists $A>0$, such that for $\mu^{\otimes\mathbb{N}}$-a.e. $\omega\in \Gamma^{\mathbb{N}}$,
the random sequence $\left\{ \gamma_k([\omega])\right\}$ in $\Gamma$
defined by (\ref{gammakomegaa}) satisfies
\begin{equation}\label{fracgammann}
\lim_{n\rightarrow\infty}\frac{|\gamma_n([\omega])|}{n}=A.
\end{equation}
\end{lemma}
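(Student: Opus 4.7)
The plan is to interpret $a_n([\omega]) \triangleq |\omega_n| = d(p,\omega_n p)$ as a subadditive cocycle on the Bernoulli system $(\Gamma^{\mathbb{N}},\mu^{\otimes\mathbb{N}},\sigma)$, where $\sigma$ is the left shift, and then apply Kingman's subadditive ergodic theorem. Writing $\omega_n = X_1 X_2 \cdots X_n$ for the position after $n$ independent $\mu$-distributed increments, the triangle inequality in $\widetilde{M}$ together with the fact that $\omega_n$ acts by isometry gives
\begin{equation*}
a_{n+m}([\omega]) \leq d(p,\omega_n p) + d(\omega_n p,\omega_{n+m} p) = |\omega_n| + |\omega_n^{-1}\omega_{n+m}| = a_n([\omega]) + a_m(\sigma^n[\omega]),
\end{equation*}
since $\omega_n^{-1}\omega_{n+m} = X_{n+1}\cdots X_{n+m}$ depends only on $\sigma^n[\omega]$ and has the same law as $\omega_m$.

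The finite first moment assumption $\sum_{\alpha\in\Gamma}|\alpha|\mu(\alpha)<\infty$ is exactly the integrability hypothesis $\int a_1\, d\mu^{\otimes\mathbb{N}}<\infty$ required by Kingman's theorem. Since $\sigma$ acts ergodically on $(\Gamma^{\mathbb{N}},\mu^{\otimes\mathbb{N}})$ by Bernoullicity, Kingman's subadditive ergodic theorem produces a (deterministic) constant $A\geq 0$ such that $a_n/n\to A$ almost surely and in $L^1$.

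The main obstacle is proving that the limit is strictly positive; this is where the non-amenability of $\Gamma$ enters in an essential way. I would invoke Kesten's theorem: non-amenability of $\Gamma$ is equivalent to the spectral radius $\rho(\tilde\mu)<1$ for the symmetrization $\tilde\mu=\tfrac{1}{2}(\mu+\check\mu)$, which yields an exponential decay $\tilde\mu^{*2n}(e)\leq\rho^{2n}$ of the return probabilities. Combined with the at-most-exponential volume growth of the word metric, $\#\{\alpha\in\Gamma : |\alpha|\leq r\}\leq e^{vr}$, a standard comparison argument bounds $\mathbb{P}(|\omega_n|\leq cn)\leq e^{vcn}\rho^n$, which tends to $0$ once $c>0$ is chosen small enough. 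Thus $|\omega_n|\geq cn$ eventually with high probability, which forces $A\geq c>0$.

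An alternative route to positivity uses the Avez asymptotic entropy $h(\mu)$: non-amenability together with the finite entropy of $\mu$ gives $h(\mu)>0$ by the Kaimanovich--Vershik/Derriennic criterion, and the fundamental inequality $h(\mu)\leq A\cdot v$ (with $v$ the exponential growth rate of $\Gamma$) then delivers $A>0$. In the setting of the paper, Lemma \ref{finitemoment} provides a finite exponential moment, so the integrability hypotheses required by either approach are automatic, and the proof cleanly reduces to two classical inputs: Kingman's subadditive ergodic theorem for the existence of the limit, and Kesten's (or equivalently Avez's) theorem for strict positivity of the drift.
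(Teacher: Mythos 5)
The paper gives no proof of this lemma at all: it is quoted directly from Guivarc'h \cite{G}, so the only comparison possible is with the classical argument, and your reconstruction — Kingman's subadditive ergodic theorem applied to $a_n([\omega])=d(p,X_1\cdots X_np)$ to get a deterministic limit $A\ge 0$, then a spectral-gap-versus-exponential-volume-growth comparison to force $A>0$ — is precisely that argument (indeed essentially Guivarc'h's own), and it is correct; your reading of $|\omega_n|$ as the displacement of the \emph{position} of the walk, not of the $n$-th increment, is also the right one and matches how the paper uses the lemma in \eqref{eq79} and Lemma \ref{brownianwalk}. The only step I would flesh out is the positivity bound for a possibly non-symmetric $\mu$: the estimate $\sup_\alpha \mu^{*n}(\alpha)\le C\rho^n$ does not follow formally from $\rho(\tilde\mu)<1$ alone, so either invoke Day's theorem that the convolution operator satisfies $\|P_\mu\|_{\ell^2}<1$ for every adapted measure on a non-amenable group, or use Cauchy--Schwarz to get $\mu^{*n}(\alpha)^2\le(\check\mu^{*n}*\mu^{*n})(e)$ and compare this with $\tilde\mu^{*2n}(e)$ (at the cost of a subexponential factor); this is standard and does not affect the conclusion.
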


\begin{lemma}\label{brownianwalk}
  The Brownian path converges at $\widetilde{M}(\infty)$ and its hitting measure at $\widetilde{M}(\infty)$
   is given by the hitting measure of the
 random walk on $\Gamma$ defined in Subsection \ref{randomwalk}.
\end{lemma}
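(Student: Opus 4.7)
The plan is to exploit the Lyons--Sullivan coupling $(c,w)\mapsto[\gamma(c,w)]$ between Brownian motion on $\widetilde{M}$ and the $\mu$--random walk on $\Gamma$ supplied by properties (i)--(iii). By (ii) the push-forward of $\overline{P}$ under $[\gamma]$ is $Q$, so Theorem~\ref{gammanp} yields that for $\overline{P}$-a.e.\ $(c,w)\in W\times\Omega$ the sequence $\gamma_n(c,w)p$ converges to a point $\xi(c,w)\in\widetilde{M}(\infty)$ in the cone topology, with distribution $\nu$. Since by (i) the projection of $\overline{P}$ onto $W$ equals $P$, it suffices to prove that for $\overline{P}$-a.e.\ $(c,w)$ the Brownian path $c(t)$ also converges in the cone topology, as $t\to\infty$, to the same boundary point $\xi(c,w)$.

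For each $t>0$ I let $n=n(t)$ be the unique integer with $T_n\leq t<T_{n+1}$. Combining the preceding lemma (giving (\ref{limsup})) with Lemma~\ref{Guivarch}, whose hypotheses are verified for our $\mu$ by Lemma~\ref{finitemoment}, I obtain, for $\overline{P}$-a.e.\ $(c,w)$,
\[
\max_{T_n\leq t<T_{n+1}}d(\gamma_n(c,w)p,c(t))=o(n),\qquad |\gamma_n(c,w)|=An+o(n),\quad A>0.
\]
Standard properties of the Lyons--Sullivan stopping times give $T_n\to\infty$, hence $n(t)\to\infty$. Consequently, for $T_n\leq t<T_{n+1}$,
\[
d(p,c(t))\geq An-o(n)\longrightarrow\infty,\qquad \frac{d(\gamma_n p,c(t))}{d(p,\gamma_n p)}\longrightarrow 0.
\]

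To conclude $c(t)\to\xi(c,w)$ in the cone topology it then remains, by the triangle-type inequality
\[
\measuredangle_p(c(t),\xi)\leq\measuredangle_p(c(t),\gamma_n p)+\measuredangle_p(\gamma_n p,\xi),
\]
to show that the first summand tends to $0$ (the second does so since $\gamma_n p\to\xi$). I would consider the unique geodesic segment from $\gamma_n p$ to $c(t)$: every point $z$ on it satisfies $d(p,z)\geq d(p,\gamma_n p)-d(\gamma_n p,c(t))\geq An-o(n)\to\infty$. In the uniform visibility case the uniform visibility axiom gives $\measuredangle_p(\gamma_n p,c(t))\to 0$ at once; in the rank~$1$ without focal points case I would instead apply Proposition~\ref{pro20}(6) along the direction $v_n=c'_{p,\gamma_n p}(0)$, whose limit is $c'_{p,\xi}(0)$, to reach the same conclusion.

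The hard part will be this last angle estimate in the rank~$1$ without focal points setting, where the manifold need not be a visibility manifold and one must combine Proposition~\ref{pro20}(6) with the linear lower bound on $|\gamma_n|$ provided by Guivarc'h's lemma so as to absorb the slowly growing error $o(n)$. Once the almost sure convergence $c(t)\to\xi(c,w)$ has been secured, the identification of hitting measures is immediate: for any Borel $A\subset\widetilde{M}(\infty)$,
\[
P\{c:\lim_{t\to\infty}c(t)\in A\}=\overline{P}\{(c,w):\xi(c,w)\in A\}=Q\{[\omega]:\xi([\omega])\in A\}=\nu(A),
\]
so the Brownian hitting measure on $\widetilde{M}(\infty)$ coincides with the random-walk hitting measure $\nu$, as required.
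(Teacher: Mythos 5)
Your plan is essentially the paper's own proof: use the Lyons--Sullivan coupling (i)--(iii), combine (\ref{limsup}) with Guivarc'h's linear escape rate (Lemma \ref{Guivarch}, justified via Lemma \ref{finitemoment}) to get $d(p,c(t))\to\infty$ and $\measuredangle_p\big(c(t),\gamma_{n(t)}(c,w)p\big)\to 0$, conclude that the Brownian path and the random walk converge to the same point of $\widetilde M(\infty)$, and then identify the hitting measures through (i), (ii) and Theorem \ref{gammanp}. The only step you single out as hard --- the angle estimate in the rank $1$, no focal points case --- is exactly the point the paper passes over: its proof simply asserts $\max_{T_n<t<T_{n+1}}\measuredangle_p(c(t),\gamma_n p)\to 0$ as a consequence of (\ref{limsup}) and (\ref{liminf}), with no further argument. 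One caution about your proposed fix: Proposition \ref{pro20}(6) as stated fixes the radius $R$ and produces a threshold $L(\epsilon,R)$ with no control on how $L$ grows with $R$, whereas here the deviation $d(\gamma_n p,c(t))$ is only $o(n)$ rather than bounded, so that proposition alone cannot absorb the error; in the uniform visibility case the visibility axiom closes the gap exactly as you say (the geodesic from $\gamma_n p$ to $c(t)$ stays at distance at least $An-o(n)$ from $p$), but in the no focal points case one needs a genuinely quantitative divergence or comparison estimate for geodesics issuing from $p$ (not contained in Proposition \ref{pro20}(2) or (6) as stated) to turn an $o(n)$ separation at distance of order $n$ into a vanishing angle at $p$ --- a detail the paper itself also leaves unaddressed.
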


\begin{proof}
By Lemma \ref{Guivarch}, we have
\begin{equation}\label{liminf}
  \liminf_{n\rightarrow\infty}\frac{1}{n}d(\gamma_n(c,w)p,p)>0.
\end{equation}
By (\ref{limsup}) and (\ref{liminf}), we obtain that for almost all $(c,w)\in W\times\Omega$,
$$d(p,c(t))\geq d(p,\gamma_n(c,w)p)-d(\gamma_n(c,w)p,c(t))\rightarrow\infty,(t\rightarrow\infty)$$
and
$$\max_{T_n<t<T_{n+1}}\measuredangle_p(c(t),\gamma_n(c,w)p)\rightarrow0,(n\rightarrow\infty).$$
Thus we have
$$\lim_{n\rightarrow\infty}\gamma_n(c,w)=\lim_{t\rightarrow\infty}c(t),a.s. (c,w)\in W\times\Omega.$$
We conclude by (i) and (ii) that the hitting measure at $\widetilde{M}(\infty)$ associated to $W\times\Omega$ exists
and is given by the hitting measure at $\widetilde{M}(\infty)$ of the $\mu$ random walk on $\Gamma.$
The lemma follows.
\end{proof}

Now we give the proof of Theorem \ref{Dirichlet}, that is,
for any continuous function $f$ on $\widetilde{M}(\infty),$ there exists a unique harmonic function on $\widetilde{M}$ which extends $f$ continuously.

\begin{proof} (Proof of Theorem \ref{Dirichlet})
Fix $p\in\widetilde{M}$, $\mu$ is the probability measure on $\Gamma$ given by $\mu(\alpha)=\mu_p(\alpha p).$
For any $q\in\widetilde{M},$ the hitting measure $\nu_q$ at $\widetilde{M}(\infty)$ of Brownian motion starting at $q$ exists.
For each $\alpha\in\Gamma$, $\nu_{\alpha q}=\alpha_*\nu_q$ since $\alpha$ acts as an isometry on $\widetilde{M}$,
where $\alpha_*\nu_q(\cdot)=\nu_q(\alpha^{-1}\cdot)$.

Define a function $h$ on $\widetilde{M}$  by $h(q)=\nu_q(f)=\int_{\xi\in\widetilde{M}(\infty)}f(\xi)\nu_q({\rm d}\xi)$,
then  $h$ is a harmonic function. For any $\alpha\in \Gamma,$
\begin{equation}\label{}
  h_f(\alpha)=\int_{\xi\in\widetilde{M}(\infty)}f(\alpha\xi)\nu({\rm d}\xi)=\int_{\eta\in\widetilde{M}(\infty)}f(\eta)(\alpha_{\ast}\nu)({\rm d}\eta)=h(\alpha p).
\end{equation}
Thus by Theorem \ref{hfgamman} and \ref{gammanp}, we have
\begin{equation}\label{alphanrightarrowxi}
h(\alpha_np)=h_f(\alpha_n)\rightarrow f(\xi), ~\mbox{if}~ \alpha_np\rightarrow \xi\in\widetilde{M}(\infty).
\end{equation}

We now prove that $h(q_n)\rightarrow f(\xi)$ if $q_n\rightarrow \xi\in\widetilde{M}(\infty)$.

Let $\xi\in\widetilde{M}(\infty).$ If there exists a sequence $\{q_n\}\subset \widetilde{M}$, such that
$q_n\rightarrow \xi$ but $h(q_n)\rightarrow f(\xi)+\varepsilon_0$ with a positive $\varepsilon_0>0.$
For the given $\xi$, let
$$\varepsilon_1=\sup\big\{\varepsilon_0|\mbox{there exists}~\{q_n\}\subset\widetilde{M},q_n\rightarrow\xi, \mbox{but}~h(q_n)\rightarrow f(\xi)+\varepsilon_0\big\}.$$
Now let $\{q_n\}$ be chosen so that $h(q_n)\rightarrow f(\xi)+\varepsilon_1$.

Since the balls $\{\alpha(B)\}_{\alpha\in\Gamma}$ is a cover of $\widetilde{M},$ there exists $\alpha_n\in\Gamma$
such that $q_n\in\alpha_n(B)$ for any $n$.
By the choices of $\varepsilon_1$ and $\{q_n\},$ we have $\alpha_nq\rightarrow\xi$ for any $q\in B,$ thus
$$\limsup_{n\rightarrow\infty}h(\alpha_nq)\leq f(\xi)+\varepsilon_1,\forall q\in B.$$

Without loss of generality, we assume that $f>0$ in the following. Choose a positive number $\varepsilon<\varepsilon_1,$
by the Harnack principle, there exists a sufficiently small positive number $\varrho$ such that
\begin{equation}\label{harnack}
\max_{y\in\alpha_n(B(p,\varrho R_B))}\tilde{h}(y)\leq \tilde{h}(\alpha_np)+\varepsilon
\end{equation}
for any positive harmonic function $\tilde{h}$ on $\alpha_n(B(p,2 R_B))$ such that $\min f\leq \tilde{h}\leq \max f,$
where $R_B$ is the radius of $B$.

\begin{figure}[hb]
		\centering
		\includegraphics[width=0.8\textwidth]{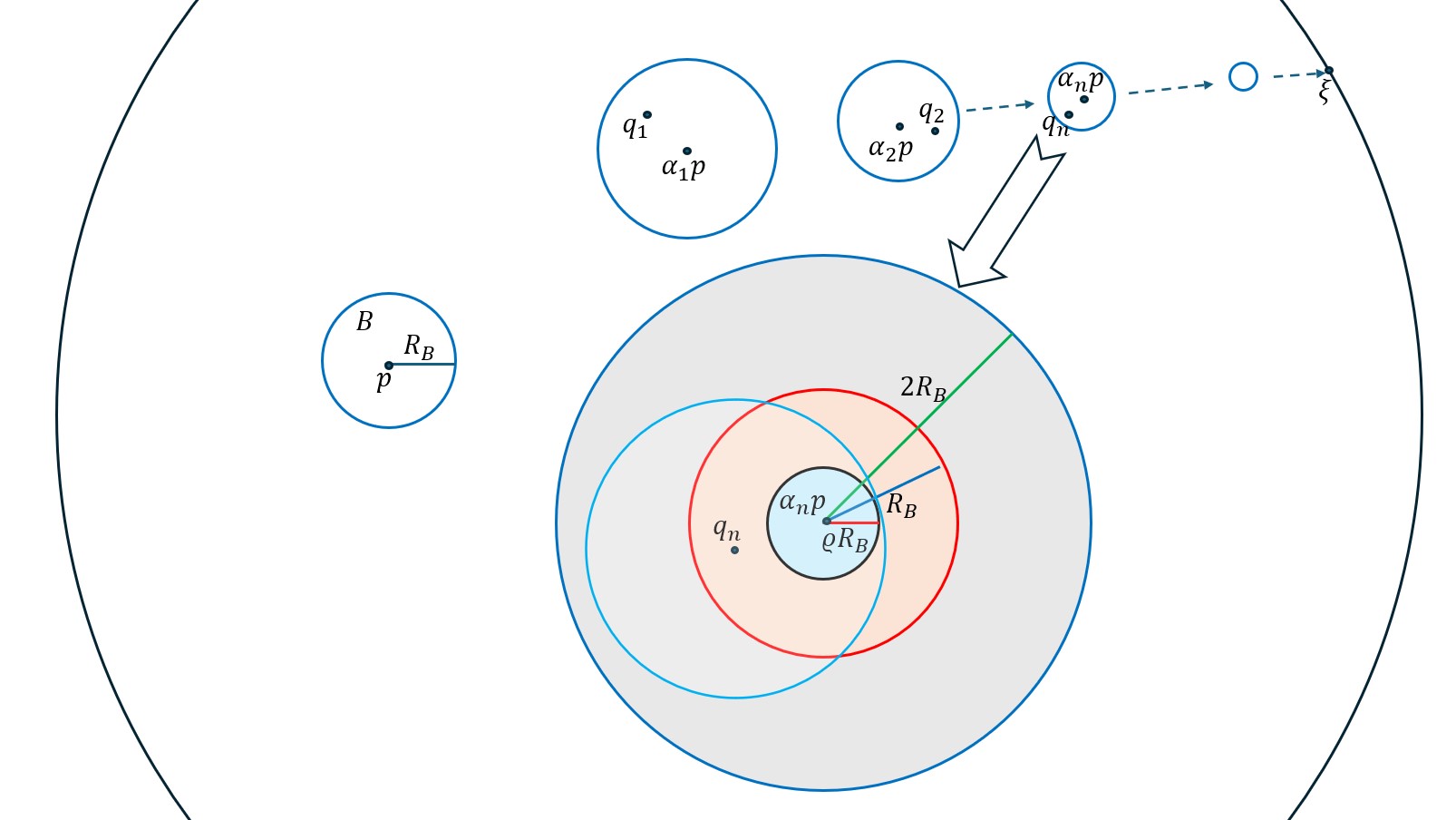}
\end{figure}

Denote by $\mathcal{E}_n$ the exit time from $\alpha_n(B(p,2 R_B))$ of the Brownian motion starting at $q_n$ and $\tau_n=\mathcal{E}_n\wedge1.$
Denote by $\lambda_n$ the probability measure on $\overline{\alpha_n(B(p,2 R_B))}$ induced by $\tau_n$. Then by the mean value theorem,
for any harmonic function
$\tilde{h}$ on $\overline{\alpha_n(B(p,2 R_B))}$, we have $\tilde{h}(q_n)=\lambda_n(\tilde{h}).$

Since all the $\alpha_n(B(p,2 R_B))$ are isometric and $q_n\in\frac12\alpha_n(B(p,2 R_B))$, there exists a constant $l>0$
such that $\lambda_n(\alpha_n(B(p,\varrho R_B))\geq l,\forall n.$

If there is a $\delta>0$ such that $\sup(h|_{\alpha_n(B(p,2 R_B))})\geq h(q_n)+\delta$ for any $n$.
Then there exists a sequence $\{q_n'\}$ such that $q_n'\rightarrow\xi$ but $h(q_n')\rightarrow f(\xi)+\varepsilon_1+\delta,$
which contradicts the choices of $\varepsilon_1$ and $\{q_n\}$. Thus we can assume that
$\sup(h|_{\alpha_n(B(p,2 R_B))})\leq h(q_n)+\frac1n$ (passing to a subsequence if necessary).
Then by (\ref{alphanrightarrowxi}) and (\ref{harnack}), we obtain that
\begin{eqnarray*}
h(q_n)&=&\lambda_n(h)=\int_{x\in\overline{\alpha_n(B(p,2 R_B))}}h(x)\lambda_n({\rm d}x)\\
&=&\int_{x\in\overline{\alpha_n(B(p,2 R_B))}-{\alpha_n(B(p,\varrho R_B))}}h(x)\lambda_n({\rm d}x)+\int_{x\in{\alpha_n(B(p,\varrho R))}}h(x)\lambda_n({\rm d}x)\\
&\leq&(h(q_n)+\frac1n)(1-\lambda_n(\alpha_n(B(p,\varrho R_B))))+(h(\alpha_np)+\varepsilon))\lambda_n(\alpha_n(B(p,\varrho R_B)))\\
&=&\lambda_n(\alpha_n(B(p,\varrho R_B)))[h(\alpha_np)-h(q_n)+\varepsilon]+h(q_n)+\frac1n-\frac1n\lambda_n(\alpha_n(B(p,\varrho R_B))).
\end{eqnarray*}
Let $n\rightarrow\infty$ in the above inequality, we have $0\leq l(-\varepsilon_1+\varepsilon),$ which contradicts $\varepsilon_1>\varepsilon.$

Similarly, we can also obtain that $\lim_{n\rightarrow\infty}h(q_n)\geq f(\xi)$ for any sequence which satisfies $q_n\rightarrow \xi$,
and the theorem follows.
\end{proof}

\section{\bf The Poisson Boundary of $(\Gamma,\mu)$}\label{question}
\setcounter{equation}{0}\setcounter{theorem}{0}

In this section, we consider the Poisson Boundary of $(\Gamma,\mu)$. We will establish an isometric isomorphism between the space of bounded $\mu$-harmonic functions on $(\Gamma,\mu)$ and $L^{\infty}(\widetilde{M}(\infty),\nu_{\ast})$, thus the space $(\widetilde{M}(\infty),\nu_{\ast})$ is the Poisson boundary of the pair $(\Gamma,\mu)$.
See \cite{Fu3, KV} for more information about Poisson boundary.

\subsection{Symbolic Dynamics}
Let $p\in \widetilde{M}$ be a fixed point, $\nu$ be a probability measure on the ideal boundary $\widetilde{M}(\infty)$,
and $\mu$ be a probability measure on $\Gamma$ such that its support generates $\Gamma$ as a semi-group.
Suppose that $\mu$ has $\mathbf{finite~first~moment}$, i.e.,
$$
\sum_{\alpha\in\Gamma}d(p,\alpha p)\mu(\alpha)<+\infty.
$$

Let $(\Gamma^{\mathbb{N}},\mu^{\otimes \mathbb{N}})$ be the product probability space, as in the theory of symbolic dynamics,
we define the $\mathbf{shift~transformation}$ as
\begin{displaymath}
			\begin{aligned}
				S:~& ~~~~~(\Gamma^{\mathbb{N}},\mu^{\otimes \mathbb{N}}) & \longrightarrow    &~~~~~~~~~(\Gamma^{\mathbb{N}},\mu^{\otimes \mathbb{N}}), \\
				       & [\omega]=(\omega_{0},\omega_{1},\omega_{2},...) &\longmapsto &~~~S[\omega]=(\omega_{1},\omega_{2},\omega_{3},...).
			\end{aligned}
\end{displaymath}
It's a classical result that the shift transformation $S$ is ergodic with respect to the measure $\mu^{\otimes \mathbb{N}}$.

For each $[\omega]=(\omega_{0},\omega_{1},\omega_{2},...)\in\Gamma^{\mathbb{N}}$, define
$$[\omega]^{(n)}\triangleq \omega_{0}\omega_{1}...\omega_{n-1},~~~~~n\in \mathbb{N}^{+}.$$

By Lemma \ref{Guivarch}, there exists a positive number $A>0$ which is independent of the point $p$, such that for
$\mu^{\otimes \mathbb{N}}$-a.e. $[\omega]=(\omega_{0},\omega_{1},\omega_{2},...)\in\Gamma^{\mathbb{N}}$,
\begin{equation}\label{eq79}
		\lim_{n\rightarrow +\infty}\frac{1}{n}\big([\omega]^{(n)}p,~p\big)=A.
\end{equation}

Let $[\omega]=(\omega_{0},\omega_{1},\omega_{2},...)\in\Gamma^{\mathbb{N}}$, denote by
\begin{equation}\label{eq79.1}
[\omega](\infty)\triangleq \lim_{n\rightarrow +\infty}[\omega]^{(n)}(p)
\end{equation}
if this limit exists.
Proposition \ref{pro20}(6) implies that the limit doesn't depend on the point $p$.
By Theorem \ref{gammanp} we know that for
$\mu^{\otimes \mathbb{N}}$-a.e. $[\omega]\in\Gamma^{\mathbb{N}}$ the limit in \eqref{eq79.1} exists.

Next we define a probability measure $\nu$ on the ideal boundary $\widetilde{M}(\infty)$ induced by $\mu$.
Let $B\subset \widetilde{M}(\infty)$ be a Borel subset, define
\begin{equation}\label{eq79.2}
\nu(B)\triangleq \mu^{\otimes \mathbb{N}}\Big\{ [\omega] \in \Gamma^{\mathbb{N}}~\big|~[\omega](\infty)\in B\Big\}.
\end{equation}
It's easy to see that $\nu$ is a probability measure on $\widetilde{M}(\infty)$ by Theorem \ref{gammanp}.

\begin{lemma}\label{le16}
The measure $\nu$ defined as in \eqref{eq79.2} has the following properties:

(1)  $\mu\ast\nu=\nu$.

(2)  For each $\alpha\in \Gamma$, the measures $\alpha_{\ast}\nu$ and $\nu$ are equivalent.

(3)  The action of $\Gamma$ on the ideal boundary $\widetilde{M}(\infty)$ is ergodic with respect to $\nu$.

(4)  The measure $\nu$ is positive on open subsets of $\widetilde{M}(\infty)$.
\end{lemma}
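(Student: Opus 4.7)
\textbf{Proof proposal for Lemma~\ref{le16}.} The plan is to reduce everything to the cocycle identity $[\omega](\infty)=\omega_{0}\cdot[S\omega](\infty)$, where $S$ is the shift. This identity is the real engine; once we have it, the four items become fairly routine applications of the product structure of $\mu^{\otimes\mathbb{N}}$, shift ergodicity, and the minimality of the $\Gamma$-action on $\widetilde{M}(\infty)$.

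First I would establish the cocycle identity. By definition, $[\omega]^{(n+1)}=\omega_{0}\cdot[S\omega]^{(n)}$, hence $[\omega]^{(n+1)}(p)=\omega_{0}\bigl([S\omega]^{(n)}(p)\bigr)$. Provided $[S\omega](\infty)$ exists (which is the case for $\mu^{\otimes\mathbb{N}}$-a.e.\ $[\omega]$ by Theorem~\ref{gammanp}), Lemma~\ref{lem161} gives that $\omega_{0}$ extends continuously to $\overline{\widetilde{M}}$, so that $[\omega](\infty)=\omega_{0}\cdot[S\omega](\infty)$. For (1), let $B\subset\widetilde{M}(\infty)$ be Borel. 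Conditioning on $\omega_{0}=\alpha$ and using independence of $\omega_{0}$ and $S[\omega]$,
\begin{align*}
\nu(B)
&=\mu^{\otimes\mathbb{N}}\{[\omega]:\omega_{0}[S\omega](\infty)\in B\}\\
&=\sum_{\alpha\in\Gamma}\mu(\alpha)\,\mu^{\otimes\mathbb{N}}\{[\omega']:\alpha[\omega'](\infty)\in B\}\\
&=\sum_{\alpha\in\Gamma}\mu(\alpha)\,\nu(\alpha^{-1}B)
=(\mu\ast\nu)(B),
\end{align*}
which gives $\mu\ast\nu=\nu$.

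Next, (2) follows by bootstrapping (1). Iterating $\mu\ast\nu=\nu$ yields $\mu^{\ast n}\ast\nu=\nu$ for every $n\in\mathbb{N}^{+}$, so $\nu(B)=\sum_{\alpha}\mu^{\ast n}(\alpha)\nu(\alpha^{-1}B)$. Hence $\nu(B)=0$ forces $\nu(\alpha^{-1}B)=0$ for every $\alpha$ in the support of $\mu^{\ast n}$; since the supports of the convolution powers exhaust the semigroup generated by $\mathrm{supp}(\mu)$, which is all of $\Gamma$, we get $\alpha_{*}\nu\ll\nu$ for every $\alpha\in\Gamma$. Applying the same argument to $\alpha^{-1}$ gives $\nu\ll\alpha_{*}\nu$, so the two measures are equivalent. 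For (3), let $B\subset\widetilde{M}(\infty)$ be a $\Gamma$-invariant Borel set and consider $\widetilde{B}\triangleq\{[\omega]\in\Gamma^{\mathbb{N}}:[\omega](\infty)\in B\}$. The cocycle identity gives $[\omega](\infty)\in B\iff\omega_{0}[S\omega](\infty)\in B\iff[S\omega](\infty)\in\omega_{0}^{-1}B=B$, so $\widetilde{B}$ is $S$-invariant. By the classical ergodicity of the Bernoulli shift, $\mu^{\otimes\mathbb{N}}(\widetilde{B})\in\{0,1\}$, i.e.\ $\nu(B)\in\{0,1\}$.

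Finally, (4) combines minimality (Proposition~\ref{pro20}(5)) with compactness and part (2). Let $U\subset\widetilde{M}(\infty)$ be a non-empty open set; minimality gives $\widetilde{M}(\infty)=\bigcup_{\alpha\in\Gamma}\alpha U$, and by compactness finitely many translates suffice: $\widetilde{M}(\infty)=\bigcup_{i=1}^{k}\alpha_{i}U$. Subadditivity then yields $1\leq\sum_{i=1}^{k}\nu(\alpha_{i}U)=\sum_{i=1}^{k}(\alpha_{i}^{-1})_{*}\nu(U)$, so at least one $(\alpha_{i}^{-1})_{*}\nu(U)$ is positive; by (2) this forces $\nu(U)>0$. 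The only delicate step in this whole scheme is the cocycle identity, which relies essentially on Lemma~\ref{lem161} (the isometries of $\widetilde{M}$ act as homeomorphisms on $\overline{\widetilde{M}}$); every other step is formal manipulation once that continuity is in hand.
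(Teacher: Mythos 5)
Your proof is correct and follows essentially the same route as the paper: conditioning on the first letter via the cocycle identity $[\omega](\infty)=\omega_{0}[S\omega](\infty)$ for (1), quasi-invariance from the convolution equation plus semigroup generation for (2), pushing shift-ergodicity forward through the boundary map $[\omega]\mapsto[\omega](\infty)$ for (3), and minimality plus quasi-invariance for (4). The only (harmless) deviations are that in (2) you iterate to convolution powers $\mu^{\ast n}$ — which in fact treats the passage from $\mathrm{supp}(\mu)$ to all of $\Gamma$ more carefully than the paper's reduction to $\alpha\in\mathrm{Supp}(\mu)$ — and that in (4) you use a finite subcover where the paper uses countable subadditivity over $\Gamma$.
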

\begin{proof}
(1) By the Definition \ref{def10}, for any Borel subset $B\subset \widetilde{M}(\infty)$,
\begin{displaymath}
		\begin{aligned}
			(\mu\ast\nu)(B) & = \int_{\widetilde{M}(\infty)}\mathbf{1}_{B}(\xi)d(\mu\ast\nu)(\xi)                   \\
			              & = \sum_{\alpha\in\Gamma}\Big(\int_{\widetilde{M}(\infty)}\mathbf{1}_{B}(\alpha\xi)d\nu(\xi)\Big)\mu(\alpha) \\
			              & =  \sum_{\alpha\in\Gamma}\Big(\int_{\widetilde{M}(\infty)}\mathbf{1}_{\alpha^{-1}B}(\xi)d\nu(\xi)\Big)\mu(\alpha)    \\
                          & =  \sum_{\alpha\in\Gamma}\mu(\alpha)\nu(\alpha^{-1}B)           \\
                          & =  \sum_{\alpha\in\Gamma}\mu(\alpha)\mu^{\otimes \mathbb{N}}
                               \Big\{ [\omega]=(\omega_{0},\omega_{1},\omega_{2},...) \in \Gamma^{\mathbb{N}}~\big|~[\omega](\infty)\in \alpha^{-1}B\Big\} \\
                          & =  \sum_{\alpha\in\Gamma}\mu^{\otimes \mathbb{N}}
                                \Big\{ [\omega]=(\alpha,\omega_{0},\omega_{1},...) \in \Gamma^{\mathbb{N}}~\big|~[\omega](\infty)\in B\Big\} \\
                          & =  \mu^{\otimes \mathbb{N}}
                                \Big\{ [\omega]=(\omega_{0},\omega_{1},\omega_{2},...) \in \Gamma^{\mathbb{N}}~\big|~[\omega](\infty)\in B\Big\} \\
                          & =  \nu(B).\\
		\end{aligned}
\end{displaymath}

(2) We need to show that for each $\alpha\in \Gamma$, if a Borel subset $B\subset \widetilde{M}(\infty)$ with $\nu(B)>0$,
then $(\alpha_{\ast}\nu)(B)=\nu(\alpha^{-1}B)>0$, and vice versa.

Since $\mu$ is a probability measure on $\Gamma$ with the property that the support of $\mu$ generates $\Gamma$ as a semi-group,
we only need to show the above claim is valid for $\alpha \in \mathrm{Supp}(\mu)$, thus $\mu(\alpha)>0$ and $\mu(\alpha^{-1})>0$.

If $\nu(B)>0$, then
\begin{displaymath}
		\begin{aligned}
			(\alpha_{\ast}\nu)(B) ~& = ~         \nu(\alpha^{-1}B)                  \\
                                  ~& = ~         \mu^{\otimes \mathbb{N}}
                                               \Big\{ [\omega]=(\omega_{0},\omega_{1},\omega_{2},...) \in \Gamma^{\mathbb{N}}~\big|~[\omega](\infty)\in \alpha^{-1}B\Big\} \\
                                  ~& \geq~  \mu^{\otimes \mathbb{N}}
                                               \Big\{ [\omega]=(\alpha^{-1},\omega_{0},\omega_{1},...) \in \Gamma^{\mathbb{N}}~\big|~[\omega](\infty)\in \alpha^{-1}B\Big\} \\
                                  ~& = ~         \mu(\alpha^{-1})\mu^{\otimes \mathbb{N}}
                                               \Big\{ [\omega]=(\omega_{0},\omega_{1},\omega_{2},...) \in \Gamma^{\mathbb{N}}~\big|~[\omega](\infty)\in B\Big\} \\
                                  ~& =  ~        \mu(\alpha^{-1})\nu(B)\\
                                  ~& > ~  0.\\
		\end{aligned}
\end{displaymath}

(3) Since the limit in \eqref{eq79.1} $\mu^{\otimes \mathbb{N}}$-a.e. exists, we can define a map from a subset of $\Gamma^{\mathbb{N}}$ with full
$\mu^{\otimes \mathbb{N}}$-measure to $\widetilde{M}(\infty)$, for the sake of simplicity, we still denote this map as
\begin{displaymath}
			\begin{aligned}
				\mathcal{Q}:~& ~~~~~~~(\Gamma^{\mathbb{N}},\mu^{\otimes \mathbb{N}}) & \longrightarrow    &~~~(\widetilde{M}(\infty),\nu), \\
				       & [\omega]=(\omega_{0},\omega_{1},\omega_{2},...) &\longmapsto &~~~\mathcal{Q}[\omega]=[\omega](\infty).
			\end{aligned}
\end{displaymath}
This map $\mathcal{Q}$ is measure preserving. Let $B \subset \widetilde{M}(\infty)$ be a $\Gamma$-invariant subset,
then $\mathcal{Q}^{-1}(B)$ is a $S$-invariant subset of $\Gamma^{\mathbb{N}}$. By the ergodicity of the shift map $S$ with respect to the measure $\mu^{\otimes \mathbb{N}}$,
we know that $\mu^{\otimes \mathbb{N}}(\mathcal{Q}^{-1}(B))= 0$ or $1$, furthermore, $\nu(B)=\mu^{\otimes \mathbb{N}}(\mathcal{Q}^{-1}(B))= 0$ or $1$. Thus $\nu$ is ergodic.

(4) Let $B$ be an open subset of the ideal boundary $\widetilde{M}(\infty)$, for any point $\xi\in B$, by Proposition \ref{pro20}(5),
we know that $\overline{\Gamma(\xi)}=\widetilde{M}(\infty)$. Since $B$ is open, $\overline{\Gamma(\xi)}\subset \Gamma(B)$.
Therefore $\Gamma(B)=\widetilde{M}(\infty)$. If $\nu(B)=0$, by the second item of this Proposition, $\nu(\alpha B)=\big(\alpha_{\ast}^{-1}\nu\big)(B)=0$ for each $\alpha\in\Gamma$.
However, the discrete group $\Gamma$ is countable, we have
$$0=\sum_{\alpha\in\Gamma}\nu(\alpha B)\geqslant \nu\Big(\bigcup_{\alpha\in\Gamma} \alpha B\Big)=\nu\big(\widetilde{M}(\infty)\big)=1.$$
Therefore $\nu(B)> 0$.
\end{proof}

The probability measure $\mu$ induces a probability measure on $\Gamma$, denoted by $\check{\mu}$, defined as follows:
for any $\alpha\in\Gamma$, $\check{\mu}(\alpha)\triangleq\mu(\alpha^{-1})$.
It's easy to see that the support of $\check{\mu}$ generates $\Gamma$ as a semi-group
and $\check{\mu}$ has finite first moment, since $\mu$ does. Similar to \eqref{eq79.2}, one can define a probability measure
$\check{\nu}$ induced by $\check{\mu}$. Thus Lemma \ref{le16} is valid for $\check{\nu}$.

Now we consider the space of the two-sided sequences $(\Gamma^{\mathbb{Z}},\mu^{\otimes \mathbb{Z}})$, the shift map $T$ is defined as
\begin{displaymath}
			\begin{aligned}
				T:~& ~~~~~~~(\Gamma^{\mathbb{Z}},\mu^{\otimes \mathbb{Z}}) & \longrightarrow    &~~~~~~~~~~~(\Gamma^{\mathbb{Z}},\mu^{\otimes \mathbb{Z}}), \\
				       & [\omega]=(...,\omega_{-1},\omega_{0},\omega_{1},...) &\longmapsto &~~~T[\omega]=(...,\omega_{0},\omega_{1},\omega_{2},...).
			\end{aligned}
\end{displaymath}
The theory of symbolic dynamics tell us that the shift map $T$ is ergodic with respect to the measure $\mu^{\otimes \mathbb{Z}}$.

We can project a two-sided sequence in $(\Gamma^{\mathbb{Z}},\mu^{\otimes \mathbb{Z}})$ to two one-sided sequences
adapted to $(\Gamma^{\mathbb{N}},\mu^{\otimes \mathbb{N}})$ and $(\Gamma^{\mathbb{N}},\check{\mu}^{\otimes \mathbb{N}})$, respectively.
More precisely, the two projections are defined as following:
\begin{displaymath}
			\begin{aligned}
				\pi_{+}:~& ~~~~~~~(\Gamma^{\mathbb{Z}},\mu^{\otimes \mathbb{Z}}) & \longrightarrow    &~~~~~~~~~~~(\Gamma^{\mathbb{N}},\mu^{\otimes \mathbb{N}}), \\
				       & [\omega]=(...,\omega_{-1},\omega_{0},\omega_{1},...) &\longmapsto &~~~\pi_{+}[\omega]=(\omega_{0},\omega_{1},\omega_{2},...)\triangleq [\omega]_{+}.
			\end{aligned}
\end{displaymath}
\begin{displaymath}
			\begin{aligned}
				\pi_{-}:~& ~~~~~~~(\Gamma^{\mathbb{Z}},\mu^{\otimes \mathbb{Z}}) & \longrightarrow    &~~~~~~~~~~~(\Gamma^{\mathbb{N}},\check{\mu}^{\otimes \mathbb{N}}), \\
				       & [\omega]=(...,\omega_{-1},\omega_{0},\omega_{1},...) &\longmapsto &~~~\pi_{-}[\omega]=(\omega^{-1}_{-1},\omega^{-1}_{-2},\omega^{-1}_{-3},...)
                         \triangleq [\omega]_{-}.
			\end{aligned}
\end{displaymath}
It's easy to see the two maps $\pi_{+}$ and $\pi_{-}$ are measure preserving, thus the following two limits exist almost surely,
$$
[\omega](+\infty)\triangleq [\omega]_{+}(\infty),~~~[\omega](-\infty)\triangleq [\omega]_{-}(\infty).
$$
The hitting measures on the ideal boundary are $\nu$ and $\check{\nu}$, respectively.

Now there is a nature map from $\Gamma^{\mathbb{Z}}$ to $\widetilde{M}^{2}(\infty)$ induced by $\pi_{+}$ and $\pi_{-}$:
\begin{displaymath}
			\begin{aligned}
				\pi:~& \Gamma^{\mathbb{Z}} & \longrightarrow    &~~~\widetilde{M}^{2}(\infty), \\
				       & [\omega] &\longmapsto &~~~(\pi_{-}[\omega],\pi_{+}[\omega])= \big([\omega](-\infty),[\omega](+\infty)\big).
			\end{aligned}
\end{displaymath}
By the independence of $[\omega]_{-}$ and $[\omega]_{+}$, the distribution of $\pi([\omega])$ is $\check{\nu}\otimes\nu$.

For each $[\omega]=(...,\omega_{-1},\omega_{0},\omega_{1},...)\in \Gamma^{\mathbb{Z}}$,
\begin{displaymath}
			\begin{aligned}
	  \big(T[\omega]\big)(+\infty) ~& = ~\lim_{n\rightarrow +\infty}(\omega_{1}\omega_{2}...\omega_{n})(p)                  \\
                                   ~& = ~ \omega^{-1}_{0}\lim_{n\rightarrow +\infty}(\omega_{0}\omega_{1}...\omega_{n})(p) \\
                                   ~& = ~ \omega^{-1}_{0}[\omega](+\infty),\\
			\end{aligned}
\end{displaymath}
\begin{displaymath}
			\begin{aligned}
      \big(T[\omega]\big)(-\infty) ~& =~\lim_{n\rightarrow +\infty}(\omega^{-1}_{0}\omega^{-1}_{-1}...\omega^{-1}_{-n+1})(p)                  \\
                                   ~& = ~  \omega^{-1}_{0}\lim_{n\rightarrow +\infty}(\omega^{-1}_{-1}\omega^{-1}_{-2}...\omega^{-1}_{-n+1})(p) \\
                                   ~& = ~  \omega^{-1}_{0}[\omega](-\infty),\\
			\end{aligned}
\end{displaymath}
i.e.,
\begin{equation}\label{eq79.5}
T[\omega](+\infty)=\omega^{-1}_{0}[\omega](+\infty),~~~T[\omega](-\infty)=\omega^{-1}_{0}[\omega](-\infty).
\end{equation}

\begin{lemma}\label{le19}
The diagonal action of $\Gamma$ on $\widetilde{M}^{2}(\infty)$ is ergodic with respect to $\check{\nu}\otimes\nu$.
\end{lemma}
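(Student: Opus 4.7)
The plan is to reduce the desired ergodicity of the diagonal $\Gamma$-action on $(\widetilde{M}^{2}(\infty),\check{\nu}\otimes\nu)$ to the classical ergodicity of the Bernoulli shift $T$ on $(\Gamma^{\mathbb{Z}},\mu^{\otimes\mathbb{Z}})$, which has already been invoked in the preceding paragraphs. The vehicle for this reduction is the factor map $\pi:\Gamma^{\mathbb{Z}}\to\widetilde{M}^{2}(\infty)$, $[\omega]\mapsto([\omega](-\infty),[\omega](+\infty))$, which, as noted just before the statement of the lemma, pushes $\mu^{\otimes\mathbb{Z}}$ forward to $\check{\nu}\otimes\nu$ by the independence of $[\omega]_{-}$ and $[\omega]_{+}$.

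The crux of the argument is the semi-conjugacy encoded in formula \eqref{eq79.5}: applying $T$ once on the symbolic side corresponds, on the boundary side, to the diagonal action of $\omega_{0}^{-1}\in\Gamma$. Concretely, suppose $B\subset\widetilde{M}^{2}(\infty)$ is a measurable set invariant under the diagonal $\Gamma$-action, so that $(\alpha\xi,\alpha\eta)\in B$ if and only if $(\xi,\eta)\in B$ for every $\alpha\in\Gamma$. Then for $\mu^{\otimes\mathbb{Z}}$-a.e.\ $[\omega]$, formula \eqref{eq79.5} yields
\[
\pi(T[\omega])=\bigl(\omega_{0}^{-1}[\omega](-\infty),\,\omega_{0}^{-1}[\omega](+\infty)\bigr),
\]
which lies in $B$ if and only if $\pi([\omega])\in B$, by diagonal $\Gamma$-invariance applied to the random element $\alpha=\omega_{0}^{-1}$. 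Hence $\pi^{-1}(B)$ is $T$-invariant modulo a $\mu^{\otimes\mathbb{Z}}$-null set. Ergodicity of $T$ then forces $\mu^{\otimes\mathbb{Z}}(\pi^{-1}(B))\in\{0,1\}$, and by the push-forward identity $(\check{\nu}\otimes\nu)(B)=\mu^{\otimes\mathbb{Z}}(\pi^{-1}(B))\in\{0,1\}$, which is exactly the ergodicity asserted.

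I do not anticipate a serious obstacle: the proof is a soft measure-theoretic argument, and all of its ingredients, namely the shift ergodicity of $T$, the push-forward identity $\pi_{\ast}\mu^{\otimes\mathbb{Z}}=\check{\nu}\otimes\nu$, and the semi-conjugacy \eqref{eq79.5}, are either classical or already recorded in the excerpt. The only point that deserves explicit attention is that the quantifier ``for every $\alpha\in\Gamma$'' in the $\Gamma$-invariance hypothesis is what allows us to absorb the fact that the shift $T$ acts by different group elements ($\omega_{0}^{-1}$) at different points of $\Gamma^{\mathbb{Z}}$; without this uniform quantifier the pullback set $\pi^{-1}(B)$ would not obviously be $T$-invariant. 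Once this is observed, the rest of the proof is immediate.
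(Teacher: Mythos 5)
Your proposal is correct and follows essentially the same route as the paper: both pull a diagonally $\Gamma$-invariant set $B$ back under $\pi$, use the semi-conjugacy \eqref{eq79.5} to see that $\pi^{-1}(B)$ is $T$-invariant, invoke ergodicity of the two-sided shift $T$ with respect to $\mu^{\otimes\mathbb{Z}}$, and transfer the $0$--$1$ law through the measure-preserving property of $\pi$. Your extra remark about the uniform quantifier over $\alpha\in\Gamma$ and the ``modulo null sets'' caveat simply makes explicit what the paper leaves implicit.
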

\begin{proof}
If $B \subset \widetilde{M}^{2}(\infty)$ is a $\Gamma$-invariant subset, by \eqref{eq79.5}
$\pi^{-1}(B)$ is a $T$-invariant subset of $\Gamma^{\mathbb{Z}}$. By the ergodicity of the shift map $T$ with respect to the measure $\mu^{\otimes \mathbb{Z}}$,
we have $\mu^{\otimes \mathbb{Z}}(\pi^{-1}(B))= 0$ or $1$. Since the map $\pi$ is measure preserving,
$\check{\nu}\otimes\nu(B)=\mu^{\otimes \mathbb{Z}}(\pi^{-1}(B))= 0$ or $1$.
\end{proof}

From now on, we assume that $\widetilde{M}$ is a rank $1$ simply connected manifolds without focal points.
We know that the set of rank $1$ vectors (see Definition \ref{def1}) is a open dense subset of $T^{1}\widetilde{M}$ (cf. \cite{BBE, LWW}).
Thus if we denote by $\mathfrak{R}$ as the set of endpoints at infinity of rank $1$ geodesics, i.e.,
$$
\mathfrak{R}\triangleq \big\{ (c(-\infty),c(+\infty))~|~c~is ~a ~rank ~1 ~geodesic ~in ~\widetilde{M} \big\},
$$
then $\mathfrak{R}$ is a open set in $\widetilde{M}^{2}(\infty)$.

\begin{lemma}\label{le23}
For $\check{\nu}\otimes\nu$-a.e. $(\xi,\eta)\in\widetilde{M}^{2}(\infty)$, there exists a rank $1$ (thus unique up to reparametrization) geodesic $c$
connecting $\xi$ and $\eta$.
\end{lemma}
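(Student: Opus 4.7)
The plan is to leverage the ergodicity of the diagonal $\Gamma$-action on $\widetilde{M}^{2}(\infty)$ established in Lemma~\ref{le19}, together with the fact that the measure $\check{\nu}\otimes\nu$ is positive on nonempty open sets, to conclude that $\check{\nu}\otimes\nu(\mathfrak{R})=1$. Concretely, I would proceed in the following order.

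First I would verify that $\mathfrak{R}$ is $\Gamma$-invariant. Since every $\alpha\in\Gamma$ acts by isometries on $\widetilde{M}$, it preserves parallel Jacobi fields along geodesics, and therefore preserves the rank of a geodesic. In particular, if $c$ is rank $1$, then so is $\alpha\circ c$, and $\alpha\cdot(c(-\infty),c(+\infty))=(\alpha c(-\infty),\alpha c(+\infty))$ again lies in $\mathfrak{R}$.

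Next I would show that $\mathfrak{R}$ has positive $\check{\nu}\otimes\nu$-measure. Since $\widetilde{M}$ is a rank $1$ simply connected manifold without focal points, rank $1$ vectors form an open dense subset of $T^{1}\widetilde{M}$, so $\mathfrak{R}$ is a nonempty open subset of $\widetilde{M}^{2}(\infty)$ (this is the observation made just before the lemma statement). Pick any $(\xi_{0},\eta_{0})\in\mathfrak{R}$; openness furnishes neighbourhoods $U\ni\xi_{0}$ and $V\ni\eta_{0}$ in $\widetilde{M}(\infty)$ with $U\times V\subset\mathfrak{R}$. By Lemma~\ref{le16}(4), applied to $\check{\nu}$ and to $\nu$, we have $\check{\nu}(U)>0$ and $\nu(V)>0$, so $(\check{\nu}\otimes\nu)(\mathfrak{R})\geq \check{\nu}(U)\nu(V)>0$.

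Now I would invoke Lemma~\ref{le19}: the diagonal $\Gamma$-action on $\widetilde{M}^{2}(\infty)$ is ergodic with respect to $\check{\nu}\otimes\nu$. Combined with the $\Gamma$-invariance of $\mathfrak{R}$, this forces $(\check{\nu}\otimes\nu)(\mathfrak{R})\in\{0,1\}$, and the previous step rules out $0$. Hence $(\check{\nu}\otimes\nu)(\mathfrak{R})=1$, which means that for $\check{\nu}\otimes\nu$-a.e.\ pair $(\xi,\eta)\in\widetilde{M}^{2}(\infty)$ there exists a rank $1$ geodesic connecting $\xi$ and $\eta$. Uniqueness up to reparametrization is automatic for rank $1$ connecting geodesics in a manifold without focal points: any two distinct connecting geodesics would bound a flat strip by the flat strip theorem, contradicting the existence of a nontrivial parallel Jacobi field only in the direction of the geodesic itself.

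I do not expect a serious obstacle here; the argument is essentially an assembly of previously established ingredients (Lemma~\ref{le16}(4), Lemma~\ref{le19}, and the openness/density of rank $1$ geodesics). The only place requiring mild care is to ensure that the positivity statement of Lemma~\ref{le16}(4) transfers to the product measure, which is handled by choosing a product neighbourhood inside the open set $\mathfrak{R}$ rather than working with a general open set of $\widetilde{M}^{2}(\infty)$ directly.
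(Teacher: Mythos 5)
Your argument is correct and follows essentially the same route as the paper: $\Gamma$-invariance of $\mathfrak{R}$ via isometries preserving rank, positivity of $\check{\nu}\otimes\nu(\mathfrak{R})$ from openness of $\mathfrak{R}$ together with Lemma~\ref{le16}(4), and then ergodicity of the diagonal action (Lemma~\ref{le19}) to get full measure. Your extra care in passing the positivity statement of Lemma~\ref{le16}(4) to the product measure via a product neighbourhood $U\times V\subset\mathfrak{R}$ is a reasonable tightening of the paper's brief assertion, not a different approach.
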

\begin{proof}
As the isometries map rank $1$ geodesics to rank $1$ geodesics, we know that $\mathfrak{R}$ is a $\Gamma$-invariant subset of $\widetilde{M}^{2}(\infty)$.
By Lemma \ref{le16}(4), $\check{\nu}\otimes\nu(\mathfrak{R})>0$ since $\mathfrak{R}$ is open. Then Lemma \ref{le19} implies that $\check{\nu}\otimes\nu(\mathfrak{R})=1$.
Thus the conclusion is valid.
\end{proof}

\begin{lemma}\label{le25}
For each $R>0$,
$$
\lim_{n\rightarrow+\infty}\mu^{\otimes \mathbb{N}}\Big\{[\omega]\in \Gamma^{\mathbb{N}}~\big|~d([\omega]^{n}(p),c_{p,[\omega](\infty)}) > R \Big\}
=\mu^{\otimes \mathbb{Z}}\Big\{[\omega]\in \Gamma^{\mathbb{Z}}~\big|~d(p,c_{[\omega](-\infty),[\omega](+\infty)}) > R \Big\}.
$$
\end{lemma}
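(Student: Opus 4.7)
The plan is to exploit the $T$-invariance of $\mu^{\otimes \mathbb{Z}}$ together with the isometric action of $\Gamma$ on $\overline{\widetilde{M}}$ to rewrite both sides of the identity as probabilities with a common base point $x_n := [\omega]^{(n)}(p)$, and then close the gap using the asymptoticity of rank $1$ geodesics sharing a forward endpoint. The first move is a direct telescoping: using the definitions of $[\omega](\pm\infty)$ and the continuity of each element of $\Gamma$ on the compactification (Lemma~\ref{lem161}, Proposition~\ref{pro20}(1)), one checks that
\[
T^n[\omega](\pm\infty) = \bigl([\omega]^{(n)}\bigr)^{-1}[\omega](\pm\infty), \qquad n \geq 0, \quad \mu^{\otimes\mathbb{Z}}\text{-a.s.,}
\]
the positive-side identity coming from $\omega_n\omega_{n+1}\cdots\omega_{n+m-1} = ([\omega]^{(n)})^{-1}[\omega]^{(n+m)}$ and the negative-side one from the split $\omega_{n-1}^{-1}\cdots\omega_{n-k}^{-1} = ([\omega]^{(n)})^{-1}\,\omega_{-1}^{-1}\cdots\omega_{-(k-n)}^{-1}$ for $k>n$. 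Since $[\omega]^{(n)}$ is an isometry, this upgrades to
\[
d\bigl(p,\,c_{T^n[\omega](-\infty),\,T^n[\omega](+\infty)}\bigr) = d\bigl([\omega]^{(n)}(p),\,c_{[\omega](-\infty),\,[\omega](+\infty)}\bigr),
\]
so by $T$-invariance of $\mu^{\otimes\mathbb{Z}}$ one obtains, for every $n \geq 0$,
\[
\mathrm{RHS} = \mu^{\otimes\mathbb{Z}}\bigl\{\,d\bigl([\omega]^{(n)}(p),\,c_{[\omega](-\infty),\,[\omega](+\infty)}\bigr) > R\,\bigr\}.
\]
On the other side, the measure-preserving projection $\pi_+$ identifies the left-hand side with $\mu^{\otimes\mathbb{Z}}\{\,d([\omega]^{(n)}(p),\,c_{p,\,[\omega](+\infty)}) > R\,\}$, because both $[\omega]^{(n)}$ and $[\omega](+\infty)$ depend only on the positive coordinates. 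Thus the two probabilities to compare share the base point $x_n$ and the forward endpoint $\eta := [\omega](+\infty)$, and differ only in that one geodesic starts at $p$ while the other starts at $\xi := [\omega](-\infty)$.

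The next step is to show that the two distances are asymptotically equal. By Theorem~\ref{gammanp}, $x_n \to \eta$ in the cone topology, $\mu^{\otimes\mathbb{Z}}$-a.s.; by Lemma~\ref{le23}, the pair $(\xi,\eta)$ lies $\check\nu\otimes\nu$-a.s.\ in $\mathfrak{R}$, so $c_{\xi,\eta}$ is rank $1$ and, being positively asymptotic to it, $c_{p,\eta}$ is rank $1$ as well. Appealing to the strong stable manifold structure for rank $1$ manifolds without focal points developed in \cite{LWW}, one has $d(c_{p,\eta}(s),\,c_{\xi,\eta}) \to 0$ as $s \to +\infty$. Combined with $x_n \to \eta$, a nearest-point projection argument (whose projection parameter must itself diverge to $+\infty$ on the event that $d(x_n,c_{p,\eta})$ stays bounded, with the divergent case handled symmetrically) yields
\[
\Delta_n([\omega]) := \bigl|\,d(x_n,c_{p,\eta}) - d(x_n,c_{\xi,\eta})\,\bigr| \longrightarrow 0 \qquad \mu^{\otimes\mathbb{Z}}\text{-a.s.}
\]

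To finish, fix $\epsilon > 0$. The symmetric difference of the two events is contained in $\{\Delta_n > \epsilon\}\cup\{|d(x_n,c_{\xi,\eta})-R|\leq\epsilon\}$; the first set has $\mu^{\otimes\mathbb{Z}}$-measure $o(1)$ as $n \to \infty$ by the a.s.\ vanishing of $\Delta_n$, while the second, by the shift-invariance identity from the first paragraph, has measure $F(R+\epsilon) - F((R-\epsilon)^{-})$ independent of $n$, where $F$ is the distribution function of $d(p,\,c_{[\omega](-\infty),[\omega](+\infty)})$; this tends to $0$ as $\epsilon \to 0$ at every continuity point of $F$. Since $F$ has at most countably many jumps and both sides of the claimed identity are monotone in $R$, a sandwich argument using nearby continuity values extends the convergence to every $R > 0$. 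The main obstacle in the whole argument is the a.s.\ vanishing of $\Delta_n$: the nontrivial input is the rank~$1$ asymptotic collapse $d(c_{p,\eta}(s),\,c_{\xi,\eta}) \to 0$ in the absence of focal points, which relies on the horospherical/strong-stable-manifold machinery from \cite{LWW}; the distributional boundary issue at jump points of $F$ is a minor secondary difficulty handled by monotone approximation.
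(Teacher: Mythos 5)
Your reduction via the two--sided shift is fine: iterating \eqref{eq79.5} gives $T^n[\omega](\pm\infty)=([\omega]^{(n)})^{-1}[\omega](\pm\infty)$, and since a.s.\ the connecting geodesic is unique (Lemma~\ref{le23}), $T$-invariance of $\mu^{\otimes\mathbb{Z}}$ and the measure-preserving projection $\pi_{+}$ legitimately rewrite both sides as $\mu^{\otimes\mathbb{Z}}$-probabilities of events involving the common base point $x_n=[\omega]^{(n)}(p)$. The genuine gap is the step $\Delta_n\to 0$: it rests on the claim that $d(c_{p,\eta}(s),c_{\xi,\eta})\to 0$ as $s\to+\infty$, i.e.\ that a geodesic ray forward-asymptotic to a rank~$1$ geodesic actually converges to it. No such ``strong stable collapse'' is available in rank~$1$ manifolds without focal points (nor in rank~$1$ nonpositive curvature): forward-asymptotic geodesics are only at \emph{bounded} distance, the flat strip theorem rules out bi-asymptotic companions but says nothing about forward asymptotic rays, and \cite{LWW} contains no contraction statement of this kind. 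The paper itself is careful never to assume it -- in the definition of $\mathfrak{R}_D$ and in the proof of Lemma~\ref{-1ndpd} the asymptotic distance $\lim_{t\to-\infty}d(\sigma_{z,\xi}(t),\sigma_{z,\eta}(t))$ is merely required to be $<D/8$ for a large $D$, not zero. Without that collapse you only get $\limsup_n\Delta_n$ bounded by the (possibly positive) asymptotic gap between $c_{p,\eta}$ and $c_{\xi,\eta}$, and the claimed equality of the two probabilities does not follow. A secondary flaw: even granting $\Delta_n\to0$, your monotone ``sandwich'' only yields the identity at continuity points of the distribution function $F$; monotonicity in $R$ of both sides does not force equality at an atom of $F$, so that issue is not actually repaired (the paper's own argument is silent on this point as well, but it does not claim to fix it by monotonicity).

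For comparison, the paper's proof avoids any pathwise comparison of distances to two different geodesics: it applies the isometry $([\omega]^{(n)})^{-1}$ to write $d([\omega]^{(n)}p,\,c_{p,[\omega](\infty)})=d\bigl(p,\,c_{q_n,\eta_n}\bigr)$ with $q_n=\omega_{n-1}^{-1}\cdots\omega_0^{-1}p$ and $\eta_n=\lim_m\omega_n\cdots\omega_{n+m}p$, notes that $q_n$ and $\eta_n$ are independent (so for each $n$ this is an identity in distribution, with $\eta_n\sim\nu$ and $q_n$ distributed as the $n$-th position of the $\check\mu$-walk), and then lets $n\to\infty$, using Lemma~\ref{lem6} (continuity of the connecting geodesic in its endpoints) to get $c_{q_n,\eta_n}\to c_{\xi,\eta}$ with $(\xi,\eta)\sim\check\nu\otimes\nu$, which is exactly the right-hand side on $\Gamma^{\mathbb{Z}}$. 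If you want to keep your shift-invariance framework, you would still need to replace the stable-collapse step by an argument of this distributional type.
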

\begin{proof}
Let $[\omega]=(\omega_{0},\omega_{1},\omega_{2},...) \in\Gamma^{\mathbb{N}}$,
\begin{displaymath}
			\begin{aligned}
      d([\omega]^{n}(p),c_{p,[\omega](\infty)}) ~& =~d(\omega_{0}\omega_{1}...\omega_{n-1}p,c_{p,[\omega](\infty)}) \\
                                                ~& =~d(p,c_{\omega^{-1}_{n-1}\omega^{-1}_{n-2}...\omega^{-1}_{0}p,
                                                      ~\omega^{-1}_{n-1}\omega^{-1}_{n-2}...\omega^{-1}_{0}[\omega](\infty)}) \\
                                                ~& =~d(p,c_{\omega^{-1}_{n-1}\omega^{-1}_{n-2}...\omega^{-1}_{0}p,
                                                      ~\lim_{m\rightarrow+\infty}\omega_{n}\omega_{n+1}...\omega_{n+m}p}).   \\
			\end{aligned}
\end{displaymath}
By the independence of $\omega_{n},\omega_{n+1},...$ and $\omega_{0},\omega_{1},...,\omega_{n-1}$, we know that the distribution of
$d([\omega]^{n}(p),c_{p,[\omega](\infty)})$ is the same as the distribution of
$d(p,c_{\omega^{-1}_{n-1}\omega^{-1}_{n-2}...\omega^{-1}_{0}p,~\omega^{-1}_{n-1}\omega^{-1}_{n-2}...\omega^{-1}_{0}[\omega](\infty)})$.
Suppose $\lim_{n\rightarrow+\infty}\omega^{-1}_{n-1}\omega^{-1}_{n-2}...\omega^{-1}_{0}p=\xi\in \widetilde{M}(\infty)$, and
$\lim_{n,m\rightarrow+\infty}\omega_{n}\omega_{n+1}...\omega_{n+m}p=\eta\in \widetilde{M}(\infty)$. By Lemma \ref{lem6}, we know
$c_{\omega^{-1}_{n-1}\omega^{-1}_{n-2}...\omega^{-1}_{0}p,~\lim_{m\rightarrow+\infty}\omega_{n}\omega_{n+1}...\omega_{n+m}p}\rightarrow c_{\xi,\eta}$.
Thus the conclusion is valid.
\end{proof}

The Lemma \ref{le23} and Lemma \ref{le25} implies the following result.
\begin{corollary}\label{co27}
Fix a positive constant $\delta$, if the number $R>0$ is large enough, then for $n$ sufficiently large, we have
$\mu^{\otimes \mathbb{N}}\big\{[\omega]\in \Gamma^{\mathbb{N}}~\big|~d([\omega]^{n}(p),c_{p,[\omega](\infty)}) > R \big\} \leq \delta$.
\end{corollary}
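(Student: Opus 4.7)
The plan is to combine the almost-sure existence of the rank $1$ connecting geodesic (Lemma \ref{le23}) with the distributional identity in the limit provided by Lemma \ref{le25}. The first step is to interpret the quantity $d(p,c_{[\omega](-\infty),[\omega](+\infty)})$ on the right-hand side of Lemma \ref{le25} as a well-defined, $\mu^{\otimes\mathbb{Z}}$-a.s.\ finite random variable on $\Gamma^{\mathbb{Z}}$. Under the map $\pi:[\omega]\mapsto([\omega](-\infty),[\omega](+\infty))$ the push-forward of $\mu^{\otimes\mathbb{Z}}$ is $\check{\nu}\otimes\nu$, and by Lemma \ref{le23} this measure is concentrated on $\mathfrak{R}$, on which the connecting geodesic exists and is rank $1$ (hence unique up to reparametrization), so $d(p,c_{[\omega](-\infty),[\omega](+\infty)})$ is a finite Borel function $\mu^{\otimes\mathbb{Z}}$-a.s.

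Next, the sets $A_R\triangleq\bigl\{[\omega]\in\Gamma^{\mathbb{Z}}\,:\,d(p,c_{[\omega](-\infty),[\omega](+\infty)})>R\bigr\}$ form a decreasing family in $R$ whose intersection over $R>0$ has $\mu^{\otimes\mathbb{Z}}$-measure zero, by finiteness a.s. Hence, given $\delta>0$, I would pick $R_0$ so large that
\[
\mu^{\otimes\mathbb{Z}}(A_{R_0}) \leq \delta/2.
\]

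Finally, with this $R_0$ fixed, Lemma \ref{le25} gives
\[
\lim_{n\to+\infty}\mu^{\otimes\mathbb{N}}\bigl\{[\omega]\in\Gamma^{\mathbb{N}}\,:\,d([\omega]^{n}(p),c_{p,[\omega](\infty)})>R_0\bigr\}
=\mu^{\otimes\mathbb{Z}}(A_{R_0})\leq\delta/2,
\]
so there exists $N=N(\delta,R_0)$ such that for every $n\geq N$ the left-hand measure is at most $\delta$, which is precisely the conclusion. There is really no serious obstacle here; the only subtle point worth double-checking is the measurability and a.s.\ finiteness of $d(p,c_{[\omega](-\infty),[\omega](+\infty)})$, which is handled by Lemma \ref{le23} together with the fact that $\mathfrak{R}$ is open (hence measurable) in $\widetilde{M}^2(\infty)$ and that on $\mathfrak{R}$ the connecting geodesic depends continuously on its endpoints via the inverse of the homeomorphism provided by Lemma \ref{lem6}.
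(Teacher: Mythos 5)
Your proposal is correct and follows essentially the same route as the paper, which derives the corollary directly from Lemma \ref{le23} (a.s.\ existence, hence a.s.\ finiteness of $d(p,c_{[\omega](-\infty),[\omega](+\infty)})$) together with the distributional limit of Lemma \ref{le25}; your write-up merely makes explicit the continuity-from-above step in $R$ and the measurability point that the paper leaves implicit.
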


\subsection{Isomorphism Between $\mu$-harmonic Function Space and $L^\infty(\widetilde{M}(\infty),\nu)$}

Given $q\in\widetilde{M}$, we can define a probability measure $\mu_q$ on $\Gamma$ by Definition \ref{defmup}.
By Theorem \ref{gammanp},  for almost any (in the sense of $\mu_q^{\otimes\mathbb{N}}$) sequence $[\omega]=(\omega_0,\omega_1,\omega_2,\cdots)$
in $\Gamma$ we have $[\omega]^{(n)}p_0$ ($\forall p_0\in \widetilde{M}$) tends to a limit in $\widetilde{M}(\infty)$ and
the hitting probability $\nu_q$ satisfies $\mu_q\ast\nu_q=\nu_q.$
By Lemma \ref{brownianwalk}, the hitting measure at $\widetilde{M}(\infty)$ of the Brownian motion starting at $q$
is just equal to $\nu_q$. Since $q\in \widetilde{M}$ is arbitrary, all the measures $\nu_q,q\in\widetilde{M}$ are equivalent by the
Harnack inequality and they define the harmonic measure class $\nu_\ast$ on $\widetilde{M}(\infty).$

In \cite{BL} it is shown that the properties of random walks on $\Gamma$  also hold for Brownian motion on $\widetilde{M}$,
 thus to obtain the isomorphism between $\mathcal{H}^{\infty}(\widetilde{M})$ and $L^{\infty}(\widetilde{M}(\infty),\nu_{\ast})$, we only need to
 prove the isomorphism between the space of bounded $\mu_q$-harmonic functions on $\Gamma$ and $L^\infty(\widetilde{M}(\infty),\nu_q)$ for any $q\in\widetilde{M}$.

Choose a fixed point $p\in \widetilde{M}$. Let $\mu=\mu_p$ be the probability measure on $\Gamma$ defined in Definition \ref{defmup}
and $\nu=\nu_p$ be the probability measure on $\widetilde{M}(\infty)$, we know that (\ref{eq67}) is satisfied for $\mu$ and $\nu$.
In the following we will prove that the space of bounded $\mu$-harmonic functions on $\Gamma$ and $L^\infty(\widetilde{M}(\infty),\nu)$ are
isomorphic.

Two sequences $[\omega],[\tilde{\omega}]\in\Gamma^{\mathbb{N}}$ are called equivalent if for all sufficiently large $n$, $[\omega]^{(n)}=[\tilde{\omega}]^{(n)}$ .
Denote by $L_\varepsilon^\infty$ the space of bounded measurable functions on $\Gamma^{\mathbb{N}}$ which are constant on equivalent classes.

\begin{lemma}\label{nuharisolvar}
The space of bounded $\mu$-harmonic functions on $\Gamma$ is canonically isomorphic to the space $L_\varepsilon^\infty.$
\end{lemma}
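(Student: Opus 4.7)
The plan is to establish the isomorphism by constructing two mutually inverse linear $L^\infty$-isometric maps $\Phi\colon\{\text{bounded }\mu\text{-harmonic functions on }\Gamma\}\to L_\varepsilon^\infty$ and $\Psi$ in the opposite direction, using martingale convergence for one direction and the Markov structure of the random walk for the other.

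For $\Phi$, given a bounded $\mu$-harmonic $h$, Lemma~\ref{martingale} makes $M_n([\omega]):=h([\omega]^{(n)})$ a bounded martingale under $\mu^{\otimes\mathbb{N}}$ with respect to the filtration $\mathcal{F}_n:=\sigma(\omega_0,\dots,\omega_{n-1})$; Doob's martingale convergence theorem then produces a bounded a.s.\ limit $F=\Phi(h)$. Because equivalent sequences satisfy $[\omega]^{(n)}=[\tilde\omega]^{(n)}$ for all sufficiently large $n$, $M_n$ eventually agrees on them, so the limit $F$ is already constant on equivalence classes on the full-measure convergence set; after extending $F$ by zero on the null complement one obtains $F\in L_\varepsilon^\infty$.

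For $\Psi$, the idea is that $h(\alpha)$ should equal the $F$-expectation for the random walk started at $\alpha$. Concretely, since the support of $\mu$ generates $\Gamma$ as a semigroup, for each $\alpha\in\Gamma$ there exists $n$ with $\mu^{\otimes\mathbb{N}}\{[\omega]^{(n)}=\alpha\}>0$, and I would define
\[
h(\alpha):=\mathbb{E}\bigl[F\,\big|\,[\omega]^{(n)}=\alpha\bigr].
\]
The main obstacle is to show this is independent of the choice of $n$: by the Markov property, conditional on $[\omega]^{(n)}=\alpha$ the future increments $\omega_n,\omega_{n+1},\dots$ are iid $\mu$ independent of $\mathcal{F}_n$, so the tail $([\omega]^{(m)})_{m\ge n}=(\alpha\omega_n\cdots\omega_{m-1})_{m\ge n}$ has a distribution depending only on $\alpha$; combining this with the tail-measurability built into the definition of $L_\varepsilon^\infty$ forces $\mathbb{E}[F\mid[\omega]^{(n)}=\alpha]$ to depend only on $\alpha$. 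An immediate consequence is the identity $\mathbb{E}[F\mid\mathcal{F}_n]=h([\omega]^{(n)})$, and the tower property then yields $h(\alpha)=\sum_{\beta\in\Gamma}\mu(\beta)h(\alpha\beta)$, so $h=:\Psi(F)$ is $\mu$-harmonic and bounded by $\|F\|_\infty$.

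Finally, $\Psi\circ\Phi=\mathrm{id}$ follows from bounded convergence inside the conditional expectation together with the iterated $\mu$-harmonicity identity $\mathbb{E}[h([\omega]^{(k)})\mid[\omega]^{(n)}=\alpha]=h(\alpha)$ for $k\ge n$; and $\Phi\circ\Psi=\mathrm{id}$ follows from Levy's upward theorem applied to $\mathbb{E}[F\mid\mathcal{F}_n]=\Psi(F)([\omega]^{(n)})$, whose limit equals $\mathbb{E}[F\mid\mathcal{F}_\infty]=F$ a.s.\ since $F$ is tail- hence $\mathcal{F}_\infty$-measurable. Linearity is immediate, and the inequalities $\|\Phi(h)\|_\infty\le\|h\|_\infty$ and $\|\Psi(F)\|_\infty\le\|F\|_\infty$ combine with the inverse relation to give an $L^\infty$-isometry, completing the canonical isomorphism.
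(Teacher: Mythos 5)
Your argument is correct and follows the same basic template as the paper's proof: the map from bounded $\mu$-harmonic functions to $L_\varepsilon^\infty$ is exactly the paper's (Lemma~\ref{martingale} plus the martingale convergence theorem), and your observation that the convergence set is a union of equivalence classes, so the limit genuinely lies in $L_\varepsilon^\infty$, is a point the paper glosses over. Where you differ is in the inverse direction and in completeness. The paper defines the inverse map directly by prepending, $h(\alpha)=\int_{\Gamma^{\mathbb{N}}} f(\alpha[\omega])\,\mu^{\otimes\mathbb{N}}(\mathrm{d}[\omega])$ with $\alpha[\omega]=(\alpha,\omega_0,\omega_1,\dots)$, and checks $\mu$-harmonicity by a one-line change of variables; you instead set $h(\alpha)=\mathbb{E}\bigl[F\mid[\omega]^{(n)}=\alpha\bigr]$ and must argue independence of the choice of $n$ via the Markov property. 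The two definitions agree, but yours carries an extra well-definedness step that the prepending formula avoids, while in return you actually verify that the two maps are mutually inverse (bounded convergence plus iterated $\mu^k$-harmonicity in one direction, L\'evy's upward theorem in the other), a verification the paper omits entirely, so your write-up is the more complete proof of ``isomorphic.'' One shared caveat: your cross-$n$ consistency, exactly like the paper's change of variables, implicitly uses that $F$ takes equal values on two sequences whose product sequences $([\omega]^{(m)})$ eventually coincide up to an index shift, which is slightly more than constancy on the equivalence classes as literally defined (equality at the same indices); this is harmless for the functions that actually arise here (martingale limits and pullbacks under the exit map $\mathcal{E}$), but deserves a sentence in both your argument and the paper's.
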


\begin{proof}
If $h$ is a bounded $\mu$-harmonic function,  by Lemma \ref{martingale}, $\{h([\omega]^{(n)})\}$ is a martingale. By the
martingale convergence theorem, $\lim_{n\rightarrow\infty}h([\omega]^{(n)})$ exist for $\mu^{\otimes\mathbb{N}}$-a.e. $[\omega]\in\Gamma^{\mathbb{N}}$,
let
$$f([\omega])=\lim_{n\rightarrow\infty}h([\omega]^{(n)}),$$
we get $f\in L_\varepsilon^\infty$.

For any $\alpha\in \Gamma$, let $\alpha[\omega]=\alpha(\omega_0,\omega_1,\cdots)=(\alpha,\omega_0,\omega_1,\cdots).$
If $f\in L_\varepsilon^\infty$, let
$$h(\alpha)=\sum_{[\omega]\in\Gamma^{\mathbb{N}}}f(\alpha[\omega])\mu^{\otimes\mathbb{N}}({\rm d}[\omega]),\forall \alpha\in\Gamma,$$
we get a bounded $\mu$-harmonic function $h$ since
\begin{eqnarray*}
\sum_{\alpha'\in\Gamma}h(\alpha\alpha')\mu(\alpha')&=&\sum_{\alpha'\in\Gamma}\sum_{[\omega]\in\Gamma^{\mathbb{N}}}f(\alpha\alpha'[\omega])
\mu^{\otimes\mathbb{N}}({\rm d}[\omega])\mu(\alpha')
\\
&=&\sum_{[\omega']\in\Gamma^{\mathbb{N}}}f(\alpha[\omega'])\mu^{\otimes\mathbb{N}}({\rm d}[\omega'])\\
&=&h(\alpha),\forall \alpha\in \Gamma.
\end{eqnarray*}
\end{proof}

Define the exit map $\mathcal{E}$ from $\Gamma^{\mathbb{N}}$ to $\widetilde{M}(\infty)$ by
$$\mathcal{E}([\omega])=[\omega](\infty)=\lim_{n\rightarrow +\infty}[\omega]^{(n)}(p).$$
Then $\mathcal{E}$ is constant on equivalence classes, thus
\begin{equation}\label{isomorphism}
f\rightarrow f\circ \mathcal{E}
\end{equation}
gives an isomorphism between $L^\infty(\widetilde{M}(\infty),\nu)$ and a subspace of $L_\varepsilon^\infty$.
For the corresponding bounded $\mu$-harmonic function $h_f$ we have
$$\lim_{n\rightarrow\infty}h_f([\omega]^{(n)})=f([\omega](\infty)).$$
To show that the above subspace is the whole space $L_\varepsilon^\infty$, we need the estimations on entropies.

\begin{Defi}\label{entropy}
The entropy $H(\mu)$ of the measure $\mu$ on $\Gamma$ is defined by
$$H(\mu)=-\sum_{\alpha\in\Gamma}\mu(\alpha)\ln \mu(\alpha).$$
The entropy $\beta(\widetilde{M}(\infty),\nu)$ of $(\widetilde{M}(\infty),\nu)$ is defined by
\begin{eqnarray*}
\beta(\widetilde{M}(\infty),\nu)&=&-\sum_{\alpha\in\Gamma}\left(\int_{\xi\in\widetilde{M}(\infty)}
\left(\ln\frac{{\rm d}((\alpha^{-1})_*\nu)(\xi)}{{\rm d}\nu(\xi)}\right){\rm d}\nu(\xi)\right)\mu(\alpha)\\
&=&
\sum_{\alpha\in\Gamma}\left(\int_{\xi\in\widetilde{M}(\infty)}
\left(\ln\frac{{\rm d}(\alpha_*\nu)(\xi)}{{\rm d}\nu(\xi)}\right){\rm d}(\alpha_*\nu)(\xi)\right)\mu(\alpha).
\end{eqnarray*}
\end{Defi}

The entropy $H(\mu)$ of $\mu$ defined in Definition \ref{entropy} is finite. In fact, since $\widetilde{M}/\Gamma$ is compact, the curvature of $\widetilde{M}$ is bounded, there exists a constant $C>0$ such that
$|\Gamma_n|\leq C{\rm e}^{Cn}$, where $\Gamma_n=\{\alpha\in\Gamma|n-1\leq d(p,\alpha p)<n\}$, $p$ is some fixed point in $\widetilde{M}$.
By the Jensen inequality, we have
\begin{eqnarray}
H(\mu)&=&-\sum_{\alpha\in\Gamma}\mu(\alpha)\ln \mu(\alpha)
=\sum_{n=1}^\infty\left(-\sum_{\alpha\in\Gamma_n}\mu(\alpha)\ln \mu(\alpha)\right)\nonumber\\
&\leq&\sum_{n=1}^\infty\left(-\sum_{\alpha\in\Gamma_n}\mu(\alpha)\ln \frac{\mu(\Gamma_n)}{|\Gamma_n|}\right)\nonumber\\
&\leq&\sum_{n=1}^\infty\mu(\Gamma_n)(Cn+\ln C)+\sum_{n=1}^\infty-\mu(\Gamma_n)\ln\mu(\Gamma_n)<+\infty,
\end{eqnarray}
since by Lemma \ref{finitemoment}, $\mu$ has finite first moment and  by using series theory,
for a sequence $\{u_n\}$ of non-negative numbers, $\sum_{n=1}^\infty nu_n<+\infty$ implies $-\sum_{n=1}^\infty u_n\ln u_n<+\infty.$

Define the probability measure $\mu^k$ on $\Gamma$ inductively by
$$\mu^k(\alpha)=\sum_{\alpha'\in\Gamma}\mu^{k-1}(\alpha')\mu(\alpha'^{-1}\alpha), k=1,2,\cdots, \mu_0=\mu.$$
That is, $\mu^k$ is the $k$-fold convolution of $\mu$ on $\Gamma$.
Then we have  (cf. \cite{KV})
\begin{equation*}
H(\mu^{n+m})\leq H(\mu^n)+H(\mu^m).
\end{equation*}

For a sequence $\{u_n\}$ of non-negative numbers, $u_{n+m}\leq u_n+u_m$ implies $\left\{\frac{u_n}{n}\right\}$ is convergent,
we obtained that the limit $\lim_{n\rightarrow\infty}\frac{1}{n}H(\mu^n)$ exists and is finite.

\begin{Defi}
Define the (Avez) entropy of the random walk on $\Gamma$ associated to $\mu$ by
\begin{equation*}
\beta(\mu)=\lim_{n\rightarrow\infty}\frac{1}{n}H(\mu^n).
\end{equation*}
\end{Defi}

By Lemma \ref{nuharisolvar}, to obtain Theorem \ref{Poisson}, we only need to prove the isomorphism between $L_\varepsilon^\infty$ and $L^\infty(\widetilde{M}(\infty),\nu)$. It has been proven in \cite{KV} that $\beta(\widetilde{M}(\infty),\nu)\leq \beta(\mu)$
and equality holds if and only if (\ref{isomorphism})
defines an isomorphism between $L^\infty(\widetilde{M}(\infty),\nu)$ and  $L_\varepsilon^\infty$.
Therefore we only need to prove $\beta(\widetilde{M}(\infty),\nu)\geq \beta(\mu)$.
For this purpose, we show another formula for $\beta(\widetilde{M}(\infty),\nu)$, that is
\begin{equation}\label{entropy2}
\beta(\widetilde{M}(\infty),\nu)=\lim_{n\rightarrow\infty}-\frac{1}{n}\ln\frac{{\rm d}((\omega_0^{-1}\omega_1^{-1}\cdots\omega_{n-1}^{-1})_*\nu)}
{{\rm d}\nu}(\xi),{a.s.}
[\omega]\in\Gamma^{\mathbb{N}},\xi\in \widetilde{M}(\infty).
\end{equation}

In fact, define $\tilde{S}:\Gamma^{\mathbb{N}}\times\widetilde{M}(\infty)\rightarrow\Gamma^{\mathbb{N}}\times\widetilde{M}(\infty)$
by $\tilde{S}([\omega],\xi)=(S[\omega],\omega_0\xi),$ then $\tilde{S}^i([\omega],\xi)=(S^i[\omega],\omega_{i-1}\cdots\omega_0\xi)$ and
$\tilde{S}$ preserves and is ergodic with respect to $\mu^{\otimes\mathbb{N}}\otimes\nu$ (cf. \cite{BL} V.2, \cite{B} $\S$7.3).
Let $F([\omega],\xi)=-\ln\frac{{\rm d}((\omega_0^{-1})_*\nu)}{{\rm d}\nu}(\xi)$, then by the ergodic theorem, we know that for $\mu^{\otimes\mathbb{N}}$-a.e. $[\omega]\in\Gamma^{\mathbb{N}}$ and $\xi\in\widetilde{M}(\infty),$
\begin{eqnarray*}
\beta(\widetilde{M}(\infty),\nu)&=&-\sum_{\alpha\in\Gamma}\left(\int_{\xi\in\widetilde{M}(\infty)}
\left(\ln\frac{{\rm d}((\alpha^{-1})_*\nu)(\xi)}{{\rm d}\nu(\xi)}\right){\rm d}\nu(\xi)\right)\mu(\alpha)\\
&=&\lim_{n\rightarrow\infty}\frac{1}{n}
\sum_{i=0}^{n-1}F(\tilde{S}^i([\omega],\xi))=
\lim_{n\rightarrow\infty}-\frac{1}{n}
\sum_{i=0}^{n-1}
\ln\frac{{\rm d}((\omega_i^{-1})_*\nu)}{{\rm d}\nu}(\omega_{i-1}\cdots\omega_0\xi)\\
&=&
\lim_{n\rightarrow\infty}-\frac{1}{n}
\sum_{i=0}^{n-1}
\ln\frac{{\rm d}((\omega_0^{-1}\cdots\omega_i^{-1})_*\nu)}{{\rm d}((\omega_0^{-1}\cdots\omega_{i-1}^{-1})_*\nu)}(\xi)\\
&=&
\lim_{n\rightarrow\infty}-\frac{1}{n}
\ln\frac{{\rm d}((\omega_0^{-1}\cdots\omega_{n-1}^{-1})_*\nu)}{{\rm d}\nu}(\xi).
\end{eqnarray*}

\begin{Defi}
Let $D>0$ and $p\in\widetilde{M}$ be a fixed point. Define two functions on $\widetilde{M}^2(\infty)$ as follows:
For $\xi,\eta\in\widetilde{M}(\infty)$,
\begin{eqnarray*}
t(\xi,\eta)&=&t_{p,D}(\xi,\eta)=\inf\{t>0|d(\sigma_{p,\xi}(t),\sigma_{p,\eta}(t))>D\},\\
d(\xi,\eta)&=&d_{p,D}(\xi,\eta)={\rm e}^{-t_{p,D}(\xi,\eta)}.
\end{eqnarray*}
Let
$$B(\xi,r)=B_{p,D}(\xi,r)=\{\eta\in\widetilde{M}(\infty)|d_{p,D}(\xi,\eta)\leq r\},$$
which will be called the ball of radius $r$ about $\xi$ although $d_{p,D}$ is not a metric in general.
\end{Defi}

Let
$$\mathfrak{R}_D=\left\{(z,\xi,\eta)\in\widetilde{M}^3(\infty)\left|\begin{array}{l}
(z,\xi)\in\mathfrak{R},(z,\eta)\in\mathfrak{R},\mbox{and}\\
\lim_{t\rightarrow-\infty}d(\sigma_{z,\xi}(t),\sigma_{z,\eta}(t))<\frac{D}{8}
\end{array}\right.\right\}.$$
For a given $\delta>0$, by Lemma \ref{le23}, we can choose a sufficiently large $D>0$ so that $\mathfrak{R}_D$ has measure greater than $1-\delta$ with respect to $\check{\nu}\otimes\nu\otimes\nu$.

By Theorem \ref{gammanp}, the following limit exists for $\mu^{\otimes\mathbb{N}}$-a.e. $[\omega]\in\Gamma^{\mathbb{N}}$,
\begin{equation}
z([\omega])\triangleq\lim_{n\rightarrow\infty}\omega_0^{-1}\omega_1^{-1}\cdots\omega_{n-1}^{-1}p.
\end{equation}
Let
$$\mathcal{P}_D=\left\{([\omega],\xi,\eta)\in\Gamma^{\mathbb{N}}\times(\widetilde{M}(\infty))^2\left|(z([\omega]),\xi,\eta)\in \mathfrak{R}_D\right.\right\}.$$
Since the distribution of $z([\omega])$ is $\check{\nu}$,
$\mathcal{P}_D$ has measure at least $1-\delta$ with respect to $\mu^{\otimes\mathbb{N}}\otimes\nu\otimes\nu$.

By Lemma \ref{Guivarch}, for $\mu^{\otimes\mathbb{N}}$-a.e. $[\omega]\in\Gamma^{\mathbb{N}}$,
\begin{equation}\label{Guivarch2}
\lim_{n\rightarrow\infty}\frac{1}{n}d(p,\omega_0^{-1}\omega_1^{-1}\cdots\omega_{n-1}^{-1}p)=A.
\end{equation}
Thus the set $\mathcal{P}_D^1$ of $([\omega],\xi,\eta)$ in $\mathcal{P}_D$ such that (\ref{Guivarch2}) holds
has measure at least $1-\delta$.

\begin{lemma}\label{-1ndpd}
For any $\delta>0$, there exists a sufficiently large $D>0$, such that the measure of the set
\begin{equation}\label{frac1nlndpd}
\left\{([\omega],\xi,\eta)\left|\lim_{n\rightarrow\infty}\frac{1}{n} t_{p,D}(\omega_{n-1}\cdots\omega_{0}\xi,\omega_{n-1}\cdots\omega_{0}\eta)=A\right.\right\}
\end{equation}
is at least $1-\delta$ with respect to $\mu^{\otimes\mathbb{N}}\otimes\nu\otimes\nu$.
\end{lemma}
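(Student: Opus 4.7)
The plan is to turn the statement into a geometric estimate about the separation time of two geodesic rays whose common basepoint drifts to infinity along a rank-$1$ direction, and then to combine that estimate with the Guivarc'h escape rate (Lemma \ref{Guivarch}). First I would set $\alpha_n := \omega_{n-1}\omega_{n-2}\cdots\omega_0$ and $q_n := \alpha_n^{-1}p = \omega_0^{-1}\omega_1^{-1}\cdots\omega_{n-1}^{-1}p$. Since $\alpha_n$ is an isometry of $\widetilde{M}$ extending to a homeomorphism of $\overline{\widetilde{M}}$ (Lemma \ref{lem161}), the isometric invariance of the truncation time yields
$$
t_{p,D}\bigl(\omega_{n-1}\cdots\omega_0\,\xi,\, \omega_{n-1}\cdots\omega_0\,\eta\bigr) \;=\; t_{q_n,D}(\xi,\eta).
$$
So it suffices to prove, for each triple $([\omega],\xi,\eta) \in \mathcal{P}_D^1$, the quantitative comparison
$$
t_{q_n,D}(\xi,\eta) \;=\; d(p,q_n) + O(1) \qquad (n \to \infty),
$$
and then to divide by $n$ and use $d(p,q_n)/n \to A$, which holds throughout $\mathcal{P}_D^1$ by construction; since $\mu^{\otimes\mathbb{N}}\otimes\nu\otimes\nu(\mathcal{P}_D^1) \geq 1-\delta$, the lemma will follow.

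To establish this geometric comparison I would exploit the three defining properties of $\mathcal{P}_D^1$: $q_n \to z := z([\omega])$ in the cone topology, both $(z,\xi)$ and $(z,\eta)$ are rank-$1$ endpoint pairs with unique biasymptotic geodesics $\sigma_{z,\xi}, \sigma_{z,\eta}$, and $\lim_{t\to-\infty} d\bigl(\sigma_{z,\xi}(t),\sigma_{z,\eta}(t)\bigr) < D/8$. For the lower bound, fix $T_- < 0$ so that $d(\sigma_{z,\xi}(t),\sigma_{z,\eta}(t)) < D/4$ for every $t \leq T_-$. An argument in the spirit of Lemma \ref{lem6} then shows that the rays $\sigma_{q_n,\xi}$ and $\sigma_{q_n,\eta}$ shadow $\sigma_{z,\xi}$ and $\sigma_{z,\eta}$ respectively within $D/8$ on an initial piece whose length grows linearly with $d(p,q_n)$; on that piece the separation stays below $D/4 + 2(D/8) = D/2 < D$, giving $t_{q_n,D}(\xi,\eta) \geq d(p,q_n) - C$. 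For the upper bound, the divergence property (Proposition \ref{pro20}(2)) supplies $T_+ > 0$ with $d(\sigma_{z,\xi}(T_+),\sigma_{z,\eta}(T_+)) > 2D$, and the same shadowing argument applied at the opposite end shows that by time $d(p,q_n) + C$ the rays from $q_n$ lie in the $D/2$-neighbourhoods of $\sigma_{z,\xi}(T_+)$ and $\sigma_{z,\eta}(T_+)$ respectively, hence are separated by more than $D$.

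The hard part is the quantitative shadowing: promoting the pointwise Lemma \ref{lem6} into a Hausdorff-distance control of the rays $\sigma_{q_n,\xi}, \sigma_{q_n,\eta}$ by the biasymptotic geodesics $\sigma_{z,\xi}, \sigma_{z,\eta}$ along initial sub-rays whose length is proportional to $d(p,q_n)$. The key tools will be the compactness of $M = \widetilde{M}/\Gamma$, which provides uniformity of all rank-$1$ constants, the continuity of $(v,t) \mapsto c_v(t)$ up to the ideal boundary (Proposition \ref{pro20}(1)), and the divergence property (Proposition \ref{pro20}(2)); together they allow one to iterate the argument of Lemma \ref{lem6} along the time parameter. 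This step directly parallels the non-positively curved arguments of \cite{BL}, with the rank-$1$ no-focal-points geometry developed in Section \ref{geometry} and in \cite{LWW} as the new geometric input. Once the shadowing estimate is in hand, both bounds follow from the triangle inequality and the lemma is proved.
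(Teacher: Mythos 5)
Your proposal follows essentially the same route as the paper's proof: reduce by isometric invariance to estimating $t_{q_n,D}(\xi,\eta)$ with $q_n=\omega_0^{-1}\cdots\omega_{n-1}^{-1}p\to z([\omega])$, show on $\mathcal{P}_D^1$ that this quantity differs from $d(p,q_n)$ by an additive constant via shadowing of the rays $c_{q_n,\xi},c_{q_n,\eta}$ by the biasymptotic geodesics $c_{z,\xi},c_{z,\eta}$ (using the $D/8$-closeness at $-\infty$ for the lower bound and divergence at $+\infty$ for the upper bound), and then divide by $n$ and apply Lemma~\ref{Guivarch}. The ``quantitative shadowing'' you single out as the hard part is exactly what the paper gets by fixing times $t_-<t_+$ with separations $D/4$ and $7D/4$ and invoking Proposition 4 of \cite{OS} together with Lemma~\ref{lem6} to produce a neighborhood $U$ of $z$ such that for every $q\in U$ the rays $c_{q,\xi},c_{q,\eta}$ stay within $D/8$ of $c_{z,\xi},c_{z,\eta}$ for all $t\ge t_-$; after that the remaining bookkeeping, including the reparametrization offset $|t_\xi-t_\eta|\le D/2$ which your sketch glosses over but for which your $D/2$ of slack is precisely what is consumed, is routine.
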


\begin{proof}
We only need to prove that for sufficiently large $D>0$,
$$\lim_{n\rightarrow\infty}\frac{1}{n} t_{p,D}(\omega_{n-1}\cdots\omega_{0}\xi,\omega_{n-1}\cdots\omega_{0}\eta)=A$$
 holds for all $([\omega],\xi,\eta)\in\mathcal{P}_D^1$,
 since the set $\mathcal{P}_D^1$ of $([\omega],\xi,\eta)$ has measure at least $1-\delta$ for sufficiently large $D$.
  Now let $([\omega],\xi,\eta)\in\mathcal{P}_D^1$, and $z=z([\omega])$.

We can parameterize $c_{z,\xi}$ and $c_{z,\eta}$ proportionally to arc length such that for each $t$,
$c_{z,\xi}(t)$ and $c_{z,\eta}(t)$ are on the same horosphere about $z$. Then
\begin{equation}
\lim_{t\rightarrow-\infty}d(c_{z,\xi}(t),c_{z,\eta}(t))<\frac{D}{8},~~\lim_{t\rightarrow+\infty}d(c_{z,\xi}(t),c_{z,\eta}(t))=+\infty.
\end{equation}
Since $\frac{D}{8}<\frac{D}{4}<\frac{7D}{4}<\infty,$ we can choose times $t_-<t_+$ such that
\begin{equation}\label{fracd35d3}
d(c_{z,\xi}(t_-),c_{z,\eta}(t_-))=\frac{D}{4},~~d(c_{z,\xi}(t_+),c_{z,\eta}(t_+))=\frac{7D}{4}.
\end{equation}

\begin{figure}[hb]
		\centering
		\includegraphics[width=0.8\textwidth]{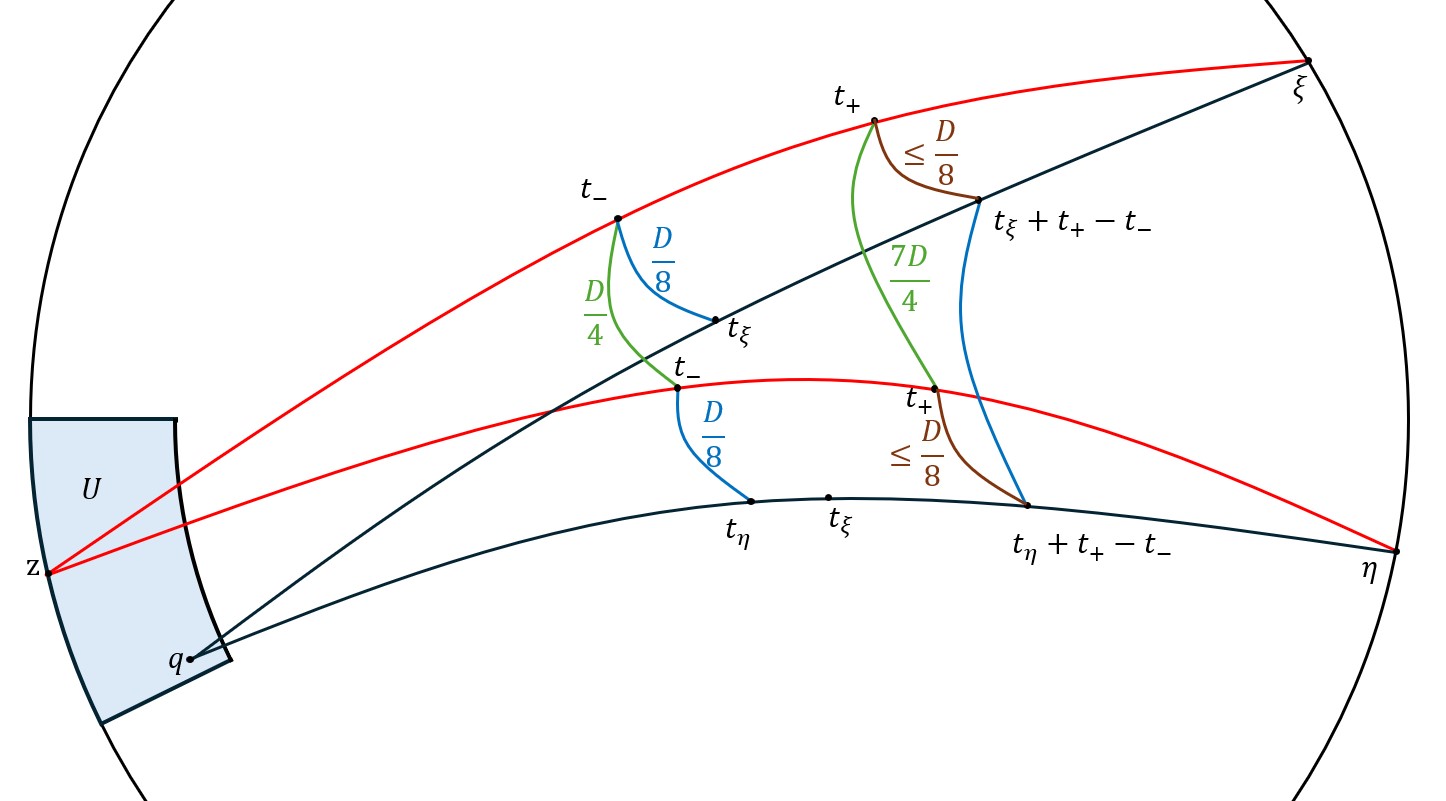}
\end{figure}

Since the geodesics $c_{z,\xi}$ and $c_{z,\eta}$ are regular, by Proposition 4 in \cite{OS} and Lemma~\ref{lem6},
there is a neighborhood $U$ of $z$ in $\overline{\widetilde{M}}$
such that
\begin{equation}
d(c_{z,\xi}(t),c_{q,\xi})\leq \frac{D}{8},~~d(c_{z,\eta}(t),c_{q,\eta})\leq \frac{D}{8},~~\forall t\geq t_-,q\in U.
\end{equation}
Now let $q\in U\cap\widetilde{M}$ and parameterize $c_{q,\xi},c_{q,\eta}$ proportionally to arc length such that
$c_{q,\xi}(0)=q=c_{q,\eta}(0).$ Then we can choose $t_\xi,t_\eta$ such that
\begin{equation}\label{leqfracd12}
d(c_{z,\xi}(t_-),c_{q,\xi}(t_\xi))= \frac{D}{8},~~d(c_{z,\eta}(t_-),c_{q,\eta}(t_\eta))= \frac{D}{8}.
\end{equation}
Thus we have
\begin{eqnarray}
&&d(c_{q,\xi}(t_\xi),c_{q,\eta}(t_\eta))\nonumber\\
&\leq&d(c_{q,\xi}(t_\xi),c_{z,\xi}(t_-))+d(c_{z,\xi}(t_-),c_{z,\eta}(t_-))+d(c_{z,\eta}(t_-),c_{q,\eta}(t_\eta))\nonumber\\
&\leq&\frac{D}{8}+\frac{D}{4}+\frac{D}{8}=\frac{D}{2},
\end{eqnarray}
and
\begin{equation}
|t_\xi-t_\eta|=|d(q,c_{q,\xi}(t_\xi))-d(q,c_{q,\eta}(t_\eta))|
\leq d(c_{q,\xi}(t_\xi),c_{q,\eta}(t_\eta))\leq\frac{D}{2}.
\end{equation}
Therefore,
\begin{equation}\label{leqd}
d(c_{q,\xi}(t_\xi),c_{q,\eta}(t_\xi))\leq|t_\xi-t_\eta|+d(c_{q,\xi}(t_\xi),c_{q,\eta}(t_\eta))\leq D.
\end{equation}

Since $c_{q,\xi}$ is asymptotic to $c_{z,\xi}$ and $c_{q,\eta}$ is asymptotic to $c_{z,\eta}$,
using (\ref{leqfracd12}) we get
\begin{equation}
d(c_{z,\xi}(t_+),c_{q,\xi}(t_\xi+t_+-t_-))\leq \frac{D}{8},~~d(c_{z,\eta}(t_+),c_{q,\eta}(t_\eta+t_+-t_-))\leq \frac{D}{8}.
\end{equation}
Therefore, by (\ref{fracd35d3}) we have
\begin{eqnarray}\label{geqd}
&&d(c_{q,\xi}(t_\xi+t_+-t_-),c_{q,\eta}(t_\xi+t_+-t_-))\nonumber\\
&\geq&d(c_{q,\xi}(t_\xi+t_+-t_-),c_{q,\eta}(t_\eta+t_+-t_-))-|t_\xi-t_\eta|\nonumber\\
&\geq&d(c_{z,\xi}(t_+),c_{z,\eta}(t_+))-
d(c_{z,\xi}(t_+),c_{q,\xi}(t_\xi+t_+-t_-))\nonumber\\
&&~~-d(c_{z,\eta}(t_+),c_{q,\eta}(t_\eta+t_+-t_-))-|t_\xi-t_\eta|\nonumber\\
&\geq&\frac{7D}{4}-\frac{D}{8}-\frac{D}{8}-\frac{D}{2}=D.
\end{eqnarray}

By (\ref{leqd}), (\ref{geqd}) and the definition of $t_{q,D}(\xi,\eta)$, we get
\begin{equation}
t_\xi\leq t_{q,D}(\xi,\eta)\leq t_\xi+t_+-t_-.
\end{equation}
Therefore we obtain
\begin{eqnarray}
&&d(q,c_{z,\xi}(t_-))-\frac{D}{8}\leq t_\xi\leq t_{q,D}(\xi,\eta)\leq t_\xi+t_+-t_-\nonumber\\
&\leq& d(q,c_{z,\xi}(t_-)+\frac{D}{8}+t_+-t_-,
\end{eqnarray}
that is
\begin{equation} \label{tqd}
|d(q,c_{z,\xi}(t_-))-t_{q,D}(\xi,\eta)|\leq\frac{D}{8}+t_+-t_-.
\end{equation}

Let $q_n=\omega_0^{-1}\cdots\omega_{n-1}^{-1}p$. Since
$z=\lim_{n\rightarrow\infty}\omega_0^{-1}\omega_1^{-1}\cdots\omega_{n-1}^{-1}p$,
it holds $q_n\in U$ for $n$ sufficiently large, thus
\begin{eqnarray*}
&&-|d(c_{z,\xi}(t_-),q_n)-t_{q_n,D}(\xi,\eta)|-d(p,c_{z,\xi}(t_-))\\
&\leq&d(c_{z,\xi}(t_-),q_n)-t_{q_n,D}(\xi,\eta)-d(p,c_{z,\xi}(t_-))\\
&\leq&d(p,q_n)-t_{q_n,D}(\xi,\eta)=d(p,q_n)-t_{p,D}(\omega_{n-1}\cdots\omega_0\xi,\omega_{n-1}\cdots\omega_0\eta)\\
&\leq&d(c_{z,\xi}(t_-),q_n)-t_{q_n,D}(\xi,\eta)+d(p,c_{z,\xi}(t_-))\\
&\leq&|d(c_{z,\xi}(t_-),q_n)-t_{q_n,D}(\xi,\eta)|+d(p,c_{z,\xi}(t_-)).
\end{eqnarray*}
By the above inequality and (\ref{tqd}), we can obtain that
\begin{equation}
|t_{p,D}(\omega_{n-1}\cdots\omega_0\xi,\omega_{n-1}\cdots\omega_0\eta)-d(p,q_n)|\leq C
\end{equation}
for a constant $C$ independent of $n$. That is
\begin{equation}
\left|\frac{t_{p,D}(\omega_{n-1}\cdots\omega_0\xi,\omega_{n-1}\cdots\omega_0\eta)}{n}-\frac{d(p,\omega_0^{-1}\cdots\omega_{n-1}^{-1}p )}{n}\right|\leq \frac{C}{n}.
\end{equation}
Let $n\rightarrow \infty$, the lemma follows from (\ref{Guivarch2}).
\end{proof}

Let $\delta>0, \beta=\beta({\widetilde{M}(\infty),\nu})$. Set
\begin{eqnarray}
\Sigma_n&=&\left\{\xi\in\widetilde{M}(\infty)\left|\nu\left(B(\xi,{\rm e}^{-n(A-\delta)})\right)\geq {\rm e}^{-n(\beta+\delta)}\right.\right\}\nonumber\\
&=&\left\{\xi\in\widetilde{M}(\infty)\left|\nu\left\{\eta\in\widetilde{M}(\infty)\left|t_{p,D}(\xi,\eta)\geq n(A-\delta)\right.\right\}\geq {\rm e}^{-n(\beta+\delta)}\right.\right\}
\end{eqnarray}
and
\begin{equation}
\tilde{\Sigma}_n=\left\{([\omega],\xi)\in\Gamma^{\mathbb{N}}\times\widetilde{M}(\infty)|\omega_{n-1}\cdots\omega_0\xi\in\Sigma_n\right\}.
\end{equation}
Set
\begin{eqnarray}
\Sigma_{([\omega],\xi)}^n&=&\left\{\eta\in\widetilde{M}(\infty)\left|\begin{array}{l}d_{p,D}(\omega_{n-1}\cdots\omega_0\xi,
\omega_{n-1}\cdots\omega_0\eta)\leq{\rm e}^{-n(A-\delta)},\\
\mbox{and}~\frac{{\rm d}((\omega_0^{-1}\cdots\omega_{n-1}^{-1})_*\nu)}{{\rm d}\nu}(\eta)\geq{\rm e}^{-n(\beta+\frac{\delta}{2})}.\end{array}\right.\right\}\nonumber\\
&=&\left\{\eta\in\widetilde{M}(\infty)\left|\begin{array}{l}t_{p,D}(\omega_{n-1}\cdots\omega_0\xi,
\omega_{n-1}\cdots\omega_0\eta)\geq{n(A-\delta)},\\
\mbox{and}~-\frac1n\ln\frac{{\rm d}((\omega_0^{-1}\cdots\omega_{n-1}^{-1})_*\nu)}{{\rm d}\nu}(\eta)\leq{(\beta+\frac{\delta}{2})}.\end{array}\right.\right\}
\end{eqnarray}
and
\begin{equation}
\hat{\Sigma}_n=\left\{([\omega],\xi)\in\Gamma^{\mathbb{N}}\times\widetilde{M}(\infty)|\nu(\Sigma_{([\omega],\xi)}^n)>\frac12\right\}.
\end{equation}

\begin{lemma}\label{sigman}
For $D$ sufficiently large, the measure of $\Sigma_n$ is at least $1-\delta$ with respect to $\nu$ for $n$  large enough.
\end{lemma}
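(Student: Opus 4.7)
The plan is to prove $\nu(\Sigma_n)\geq 1-\delta$ by combining the geometric asymptotic of Lemma~\ref{-1ndpd} with the Shannon--McMillan type formula~\eqref{entropy2}, and then transferring the resulting information from the product space $\Gamma^{\mathbb{N}}\times\widetilde{M}(\infty)$ back to $\widetilde{M}(\infty)$ via the stationarity $\mu\ast\nu=\nu$. The transfer identity I will rely on is
\begin{equation*}
(\mu^{\otimes\mathbb{N}}\otimes\nu)(\tilde{\Sigma}_n)=\int_{\Gamma^{\mathbb{N}}}\bigl((\omega_{n-1}\cdots\omega_0)_\ast\nu\bigr)(\Sigma_n)\,d\mu^{\otimes\mathbb{N}}([\omega])=(\mu^n\ast\nu)(\Sigma_n)=\nu(\Sigma_n),
\end{equation*}
since $\omega_{n-1}\cdots\omega_0$ is distributed as $\mu^n$ and $\mu\ast\nu=\nu$ forces inductively $\mu^n\ast\nu=\nu$. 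It thus suffices to show $(\mu^{\otimes\mathbb{N}}\otimes\nu)(\tilde{\Sigma}_n)\geq 1-\delta$, and I will do so through the chain $\hat{\Sigma}_n\subset\tilde{\Sigma}_n$ together with $(\mu^{\otimes\mathbb{N}}\otimes\nu)(\hat{\Sigma}_n)\geq 1-\delta$.

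To get the measure bound on $\hat{\Sigma}_n$, I would first render both asymptotic statements uniform via Egorov. Choose $D$ so that the conclusion of Lemma~\ref{-1ndpd} holds on a set of $\mu^{\otimes\mathbb{N}}\otimes\nu\otimes\nu$-measure at least $1-\delta/4$; Egorov then produces a subset $E_1\subset\Gamma^{\mathbb{N}}\times\widetilde{M}(\infty)\times\widetilde{M}(\infty)$ of measure at least $1-\delta/4$ on which $t_{p,D}(\omega_{n-1}\cdots\omega_0\xi,\omega_{n-1}\cdots\omega_0\eta)\geq n(A-\delta)$ uniformly for all $n\geq N_1$. Similarly, the a.e.\ convergence in~\eqref{entropy2} together with Egorov applied in the $([\omega],\eta)$ variables yields a subset $E_2\subset\Gamma^{\mathbb{N}}\times\widetilde{M}(\infty)$ of $\mu^{\otimes\mathbb{N}}\otimes\nu$-measure at least $1-\delta/4$ on which $-\frac{1}{n}\ln\frac{d(\omega_0^{-1}\cdots\omega_{n-1}^{-1})_\ast\nu}{d\nu}(\eta)\leq\beta+\delta/2$ for every $n\geq N_2$. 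Lifting $E_2$ trivially in the $\xi$-coordinate and intersecting with $E_1$ produces a set of $\mu^{\otimes\mathbb{N}}\otimes\nu\otimes\nu$-measure at least $1-\delta/2$ contained in $\{([\omega],\xi,\eta):\eta\in\Sigma_{([\omega],\xi)}^n\}$ for every $n\geq\max(N_1,N_2)$. Fubini plus Markov's inequality then yield $(\mu^{\otimes\mathbb{N}}\otimes\nu)(\hat{\Sigma}_n)\geq 1-\delta$ for such $n$.

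Finally, to establish $\hat{\Sigma}_n\subset\tilde{\Sigma}_n$, fix $([\omega],\xi)\in\hat{\Sigma}_n$ and let $g=\omega_{n-1}\cdots\omega_0$. The ball $B(g\xi,e^{-n(A-\delta)})=\{g\eta:t_{p,D}(g\xi,g\eta)\geq n(A-\delta)\}$ rewrites via the Radon--Nikodym derivative of $(g^{-1})_\ast\nu$ with respect to $\nu$ as
\begin{equation*}
\nu\bigl(B(g\xi,e^{-n(A-\delta)})\bigr)=\int_{\{\eta:\,t_{p,D}(g\xi,g\eta)\geq n(A-\delta)\}}\frac{d(g^{-1})_\ast\nu}{d\nu}(\eta)\,d\nu(\eta)\geq\int_{\Sigma_{([\omega],\xi)}^n}e^{-n(\beta+\delta/2)}\,d\nu(\eta)\geq\tfrac{1}{2}e^{-n(\beta+\delta/2)},
\end{equation*}
which exceeds $e^{-n(\beta+\delta)}$ as soon as $n\delta/2>\ln 2$. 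Hence $g\xi\in\Sigma_n$ and $([\omega],\xi)\in\tilde{\Sigma}_n$, and combined with the opening identity this closes the argument. The principal technical obstacle is the Egorov step: the geometric bound from Lemma~\ref{-1ndpd} and the Radon--Nikodym bound from~\eqref{entropy2} must be made uniform \emph{simultaneously} on a single subset of large product measure, which forces $D$ and the thresholds $N_1,N_2$ to be chosen in a coordinated fashion, and this is the only place in the argument where the choice of $D$ actually enters.
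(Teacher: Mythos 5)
Your proposal is correct and follows essentially the same route as the paper: combine Lemma~\ref{-1ndpd} with the Shannon--McMillan formula~\eqref{entropy2} to show $\hat{\Sigma}_n$ has measure at least $1-\delta$, verify $\hat{\Sigma}_n\subset\tilde{\Sigma}_n$ by the same Radon--Nikodym estimate $\nu\bigl(B(g\xi,e^{-n(A-\delta)})\bigr)\geq\tfrac12 e^{-n(\beta+\delta/2)}\geq e^{-n(\beta+\delta)}$, and transfer to $\nu(\Sigma_n)$ via $\mu^{n}\ast\nu=\nu$. The only difference is that you spell out the Egorov/Fubini/Markov bookkeeping behind the bound on $\hat{\Sigma}_n$, which the paper leaves implicit.
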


\begin{proof}
By (\ref{entropy2}) and Lemma \ref{-1ndpd}, if $D$ is sufficiently large, the set $\hat{\Sigma}_n$ for all $n$ sufficiently large has measure at least $1-\delta$.
Choose $n$ large enough such that ${\rm e}^{-\frac{n\delta}{2}}<\frac12.$ Given $([\omega],\xi)\in\hat{\Sigma}_n,$
then for any $\eta\in\Sigma_{([\omega],\xi)}^n$, we have
\begin{eqnarray*}
{\rm d}\nu(\omega_{n-1}\cdots\omega_0\eta)={\rm d}((\omega_0^{-1}\cdots\omega_{n-1}^{-1})_*\nu)(\eta)
\geq{\rm d}\nu(\eta){\rm e}^{-n(\beta+\frac{\delta}{2})}.
\end{eqnarray*}
Thus
\begin{eqnarray*}
\nu\{\omega_{n-1}\cdots\omega_0\eta|\eta\in\Sigma_{([\omega],\xi)}^n\}
\geq\frac12{\rm e}^{-n(\beta+\frac{\delta}{2})}\geq {\rm e}^{-\frac{n\delta}{2}}{\rm e}^{-n(\beta+\frac{\delta}{2})}=
{\rm e}^{-n(\beta+\delta)}.
\end{eqnarray*}
By the definitions of $\Sigma_{([\omega],\xi)}^n$ and $\tilde{\Sigma}_n$, $([\omega],\xi)\in\tilde{\Sigma}_n.$
By the arbitrariness of $([\omega],\xi)$ we have $\hat{\Sigma}_n\subset\tilde{\Sigma}_n$ and therefore
the set $\tilde{\Sigma}_n$ has measure at least $1-\delta$ for all $n$  large enough.
The lemma follows since $\mu\ast\nu=\nu$ and the fact that the set $\tilde{\Sigma}_n$ has measure at least $1-\delta$ with respect to $\mu^{\otimes\mathbb{N}}\times\nu$ for $n$ sufficiently large
implies $\Sigma_n$ has measure at least $1-\delta$ with respect to $\nu$.
\end{proof}

\begin{theorem}
 $\beta(\widetilde{M}(\infty),\nu)=\beta\geq \beta(\mu)$.
\end{theorem}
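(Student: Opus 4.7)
The goal is to show $\beta(\widetilde{M}(\infty),\nu) \geq \beta(\mu)$; the first equality in the statement is just the definition of $\beta$. The plan is to exhibit, for every $\delta > 0$ and sufficiently large $n$, a partition of $\widetilde{M}(\infty)$ of $\nu$-entropy at most $n(\beta+\delta)+O(1)$ and then to transfer this bound to $H(\mu^n)$ via the random walk, paying only an $O(\delta n)$ correction; letting $\delta\to 0$ will yield the inequality.

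First I would fix $\delta>0$ and choose $D$ large enough that Lemma~\ref{sigman} gives $\nu(\Sigma_n)\geq 1-\delta$ for all $n$ large. A Vitali-type covering argument in the pseudo-metric $d_{p,D}$ extracts a finite disjoint family $\{B_i = B(\xi_i,e^{-n(A-\delta)})\}_{i=1}^{N_n}$ with centers $\xi_i\in\Sigma_n$ whose union has $\nu$-measure at least $1-2\delta$; since each $\nu(B_i)\geq e^{-n(\beta+\delta)}$, we have $N_n\leq e^{n(\beta+\delta)}$. Together with the complement as a residual element this defines a partition $\mathcal{P}_n$ with
$$H(\nu,\mathcal{P}_n)\;\leq\;\log(N_n+1)\;\leq\;n(\beta+\delta)+O(1).$$

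On $(\Gamma^{\mathbb{N}},\mu^{\otimes\mathbb{N}})$ set $\gamma_n=\omega_0\cdots\omega_{n-1}$ and $\xi_\infty=\lim_n\gamma_n p$, with laws $\mu^n$ and $\nu$ respectively, and write $\kappa_n(\xi_\infty)$ for the index of the element of $\mathcal{P}_n$ containing $\xi_\infty$. The chain rule for Shannon entropy gives
$$H(\mu^n)=H(\gamma_n)\;\leq\; H(\kappa_n(\xi_\infty))+H(\gamma_n\mid\kappa_n(\xi_\infty))\;\leq\; n(\beta+\delta)+O(1)+H(\gamma_n\mid\kappa_n(\xi_\infty)),$$
so it suffices to show $H(\gamma_n\mid\kappa_n(\xi_\infty))=O(\delta n)$. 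Condition on $\xi_\infty\in B_i$ for some $i\geq 1$. On the event of probability at least $1-2\delta$ provided by Lemma~\ref{Guivarch} (so $d(p,\gamma_n p)\in[(A-\delta)n,(A+\delta)n]$) and Corollary~\ref{co27} (so $d(\gamma_n p, c_{p,\xi_\infty})\leq R$ for a suitable $R$), the tube property $d(c_{p,\xi_\infty}(t),c_{p,\xi_i}(t))\leq D$ for $t\leq n(A-\delta)$ combined with the unit-speed bound $d(c(t+s),c'(t+s))\leq d(c(t),c'(t))+2s$ forces $d(\gamma_n p, c_{p,\xi_i}(An))\leq R+D+O(\delta n)$. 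By cocompactness, the number of orbit points of $\Gamma p$ in a ball of radius $r$ is at most $\exp(Cr)$ for a constant $C$ depending only on the geometry, so at most $\exp(O(\delta n))$ values of $\gamma_n$ remain compatible with $\xi_\infty\in B_i$ on this good event; the residual class and the complementary bad event together contribute at most $O(\delta n)$ more to the conditional entropy.

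Assembling everything, $\tfrac{1}{n}H(\mu^n)\leq \beta+O(\delta)+o(1)$; letting $n\to\infty$ and then $\delta\to 0$ yields $\beta(\mu)\leq\beta$ as required. The main obstacle is the conditional-entropy step: the CAT(0) comparison tools used in the classical non-positively curved rank $1$ setting are unavailable here, and we must instead rely on the geodesic-tracking statement for random walks (Corollary~\ref{co27}) together with the quantitative control of the divergence of nearby geodesic rays encoded by the pseudo-metric $d_{p,D}$, both of which were carefully developed earlier in this paper precisely for the "beyond non-positive curvature" framework under discussion.
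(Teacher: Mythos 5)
Your geometric skeleton is the same as the paper's: cover $\Sigma_n$ (Lemma \ref{sigman}) by $d_{p,D}$-balls of radius ${\rm e}^{-n(A-\delta)}$ each of $\nu$-measure at least ${\rm e}^{-n(\beta+\delta)}$, use the tracking estimate (Corollary \ref{co27}) and the drift (Lemma \ref{Guivarch}) to confine $[\omega]^{(n)}p$ to a ball of radius $2R+D+O(\delta n)$ about $c_{p,\xi_i}(n(A-\delta))$, and count orbit points by cocompactness. Where you genuinely diverge is the entropy bookkeeping: the paper never forms a conditional entropy. It adds the Kaimanovich--Vershik pointwise estimate $\mu^{n}([\omega]^{(n)})\le {\rm e}^{-n(\beta(\mu)-\delta)}$ (condition (IV) of the good set $\Omega_n$, from \cite{KV}) and then bounds the probability mass $1-2\delta$ directly by (number of balls)$\times$(number of admissible values of $[\omega]^{(n)}$ per ball)$\times{\rm e}^{-n(\beta(\mu)-\delta)}$. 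Your chain-rule route ($H(\mu^n)\le H(\kappa_n)+H(\gamma_n\mid\kappa_n)$) is viable in principle, but as written it has two real gaps.

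First, the covering step. A Vitali-type extraction only guarantees that the \emph{dilated} balls of a disjoint subfamily cover; since $d_{p,D}$ is not a metric and $\nu$ is not known to be doubling, you cannot assert that the disjoint balls themselves capture $\nu$-measure $1-2\delta$. This matters for you: if the small partition classes miss a set of measure bounded away from $0$, the conditional entropy term is of order $n\beta(\mu)$ and the argument collapses. The repair is either the paper's device (a cover of $\Sigma_n$ of multiplicity bounded by a constant $L$, coming from the pairwise $D$-separation of the points $q_i=\sigma_{p,\xi_i}(n(A-\delta))$, which in turn uses monotonicity of the divergence of rays from a common point in the no-focal-points setting) or a maximal separated net whose balls for the parameter $2D$ cover, again via that monotonicity; either way it is an argument you did not supply. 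Second, the bad event. $P(\mathrm{bad})\le O(\delta)$ does not by itself give $P(\mathrm{bad})\,H(\gamma_n\mid\mathrm{bad})=O(\delta n)$: here $\mu$ is the Lyons--Sullivan measure, whose support is all of $\Gamma$, so the conditional law of $\gamma_n$ on a small-probability event can a priori carry entropy of order $n$ once divided by that probability. One must combine the growth bound $|\Gamma_k|\le C{\rm e}^{Ck}$ (used in the paper to show $H(\mu)<\infty$) with the moment control of Lemma \ref{finitemoment}, giving $H(\gamma_n\mid\mathrm{bad})\le C\,E\big[\,|\gamma_n|\,\big|\,\mathrm{bad}\big]+O(\log n)$ together with $E\big[\,|\gamma_n|\mathbf{1}_{\mathrm{bad}}\big]=O(\delta n)$ by uniform integrability of $|\gamma_n|/n$. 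This is standard, but it is exactly the step your one-line assertion hides, and it is precisely what the paper's appeal to the pointwise estimate of \cite{KV} is designed to avoid.
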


\begin{proof}
Let $D$ and $\Sigma_n$ be as Lemma \ref{sigman} and fix $\delta>0$ small enough.
Let $\{\xi_i\in\Sigma_n|i\in I_n\}$ be a set of points such that $\{B(\xi_i,{\rm e}^{-n(A-\delta)})\}_{i\in I_n}$
is a cover of $\Sigma_n.$ Without loss of generality, we assume that for $i\neq j$, $\xi_j\notin B(\xi_i,{\rm e}^{-n(A-\delta)})$.
Thus $d_{p,D}(\xi_i,\xi_j)>{\rm e}^{-n(A-\delta)}$, that is $t_{p,D}(\xi_i,\xi_j)<n(A-\delta)$, for $i\neq j$.
Let $q_i=\sigma_{p,\xi_i}(n(A-\delta)),i\in I_n,$ then by the definition of $t_{p,D}$, we have
$d(q_i,q_j)>D$ for all $i\neq j.$
Hence the index of the cover has an upper bound $L$ independently of $n$, that is,
\begin{equation}
|I_n|\leq L{e}^{n(\beta+\delta)}.
\end{equation}
In fact, by the definition of $\Sigma_n$, we have $\nu(B(\xi_i,{\rm e}^{-n(A-\delta)}))\geq {e}^{-n(\beta+\delta)},$
that is
$$1\leq {e}^{n(\beta+\delta)}\nu(B(\xi_i,{\rm e}^{-n(A-\delta)})),$$
  we obtained that
  $$|I_n|=\sum_{i\in I_n}1\leq{e}^{n(\beta+\delta)}\sum_{i\in I_n}\nu(B(\xi_i,{\rm e}^{-n(A-\delta)}))\leq L{e}^{n(\beta+\delta)}$$
since the union of the balls has measure at most 1 and $dim\widetilde{M}(\infty)<\infty$.

Let $\Omega_n$ be a subset of $\Gamma^{\mathbb{N}}$ such that for any $[\omega]\in\Omega_n$:
\begin{itemize}
  \item (I) $[\omega](\infty)=\lim_{m\rightarrow\infty}[\omega]^{(m)}p\in \Sigma_n$;
  \item (II) $d([\omega]^{(n)}p,c_{p,[\omega](\infty)})\leq R$;
  \item (III) $\left|\frac1nd([\omega]^{(n)}p,p)-A\right|\leq \delta$;
  \item (IV) $-\frac{1}{n}\ln\mu^{n}([\omega]^{(n)})\geq \beta(\mu)-\delta.$
\end{itemize}
From Lemma \ref{sigman}, Corollary \ref{co27}, Lemma \ref{Guivarch} and \cite{KV}, we can obtain that the measure of
$\Omega_n$ is at least $1-2\delta$ if $n$ and $R$  large enough.
In the following we always assume that $n$ and $R$ be chosen such that $\Omega_n$ has measure at least $1-2\delta$ with respect to $\mu^{\otimes\mathbb{N}}$.

Since $\{B(\xi_i,{\rm e}^{-n(A-\delta)})\}_{i\in I_n}$ is a cover of $\Sigma_n$,
we get
\begin{eqnarray}\label{1-2delta}
1-2\delta&\leq&\mu^{\otimes\mathbb{N}}\{[\omega]\in\Omega_n|[\omega](\infty)\in\Sigma_n\}\nonumber\\
&\leq&\sum_{i\in I_n}\mu^{\otimes\mathbb{N}}\{[\omega]\in\Omega_n|[\omega](\infty)\in B(\xi_i,{\rm e}^{-n(A-\delta)})\}.
\end{eqnarray}
If $[\omega]\in\Omega_n$ such that $[\omega](\infty)\in B(\xi_i,{\rm e}^{-n(A-\delta)})$, then $d_{p,D}(\xi_i,[\omega](\infty))\leq{\rm e}^{-n(A-\delta)},$
that is, $t_{p,D}(\xi_i,[\omega](\infty))\geq n(A-\delta),$ thus
$$d(c_{p,\xi_i}(n(A-\delta)),c_{p,[\omega](\infty)}(n(A-\delta)))\leq D.$$
Choose $t$ such that $d(c_{p,[\omega](\infty)}(t),[\omega]^{(n)}p)=d(c_{p,[\omega](\infty)},[\omega]^{(n)}p)$, then we have
\begin{eqnarray}\label{dgammanpsigmap}
&&d([\omega]^{(n)}p,c_{p,\xi_i}(n(A-\delta)))\nonumber\\
&\leq&d([\omega]^{(n)}p,c_{p,[\omega](\infty)}(t))\nonumber\\
&&~~+d(c_{p,[\omega](\infty)}(t),c_{p,[\omega](\infty)}(n(A-\delta)))\nonumber\\
&&~~+d(c_{p,[\omega](\infty)}(n(A-\delta)),c_{p,\xi_i}(n(A-\delta)))\nonumber\\
&\leq&R+|t-n(A-\delta)|+D.
\end{eqnarray}

\begin{figure}[hb]
		\centering
		\includegraphics[width=0.8\textwidth]{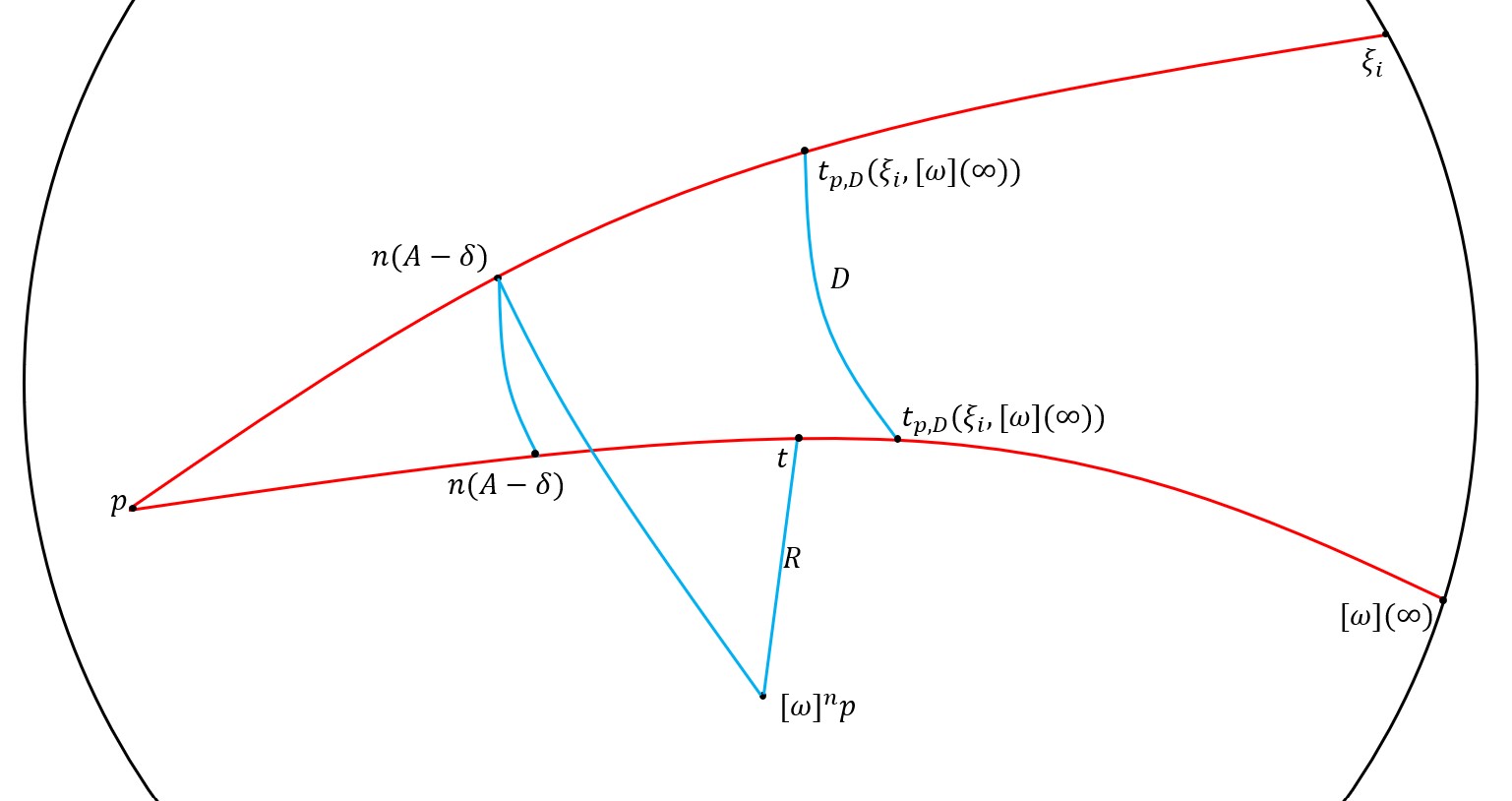}
\end{figure}

 Since  by (III),
 \begin{eqnarray*}
 t-n(A-\delta)&\leq &d([\omega]^{(n)}p,p)+R-n(A-\delta)\\&\leq& R+nA+n\delta-n(A-\delta)=R+2n\delta,\\
 t-n(A-\delta)&\geq& d([\omega]^{(n)}p,p)-R-n(A-\delta)\\&\geq &-R+nA-n\delta-n(A-\delta)=-R.
 \end{eqnarray*}
Thus by (\ref{dgammanpsigmap}) we obtain
\begin{eqnarray}
d([\omega]^{(n)}p,c_{p,\xi_i}(n(A-\delta)))&\leq&R+|t-n(A-\delta)|+D\nonumber\\
&\leq&R+R+2n\delta+D=2R+D+2n\delta.
\end{eqnarray}
The inequality shows that
$$\sharp\left\{[\omega]\in \Gamma^{\mathbb{N}}\left|[\omega]^{(n)}p\in B(c_{p,\xi_i}(n(A-\delta)),2R+D+2n\delta)\right.\right\}\leq C{\rm e}^{C(2R+D+2n\delta)},$$
where $C$ is a constant independent of $n$ and depends on $\widetilde{M}$ and $\Gamma$.
By (IV) we have
\begin{equation}\label{}
  \mu^{n}([\omega]^{(n)})\leq {\rm e}^{-n(\beta(\mu)-\delta)}.
\end{equation}
Therefore, we get
\begin{eqnarray}
   1-2\delta&\leq&|I_n|C{\rm e}^{C(2R+D+2n\delta)} {\rm e}^{-n(\beta(\mu)-\delta)}\nonumber \\
   &\leq& L{e}^{n(\beta+\delta)}C{\rm e}^{C(2R+D+2n\delta)} {\rm e}^{-n(\beta(\mu)-\delta)}\nonumber  \\
 &=&   LC{e}^{n(\beta-\beta(\mu)+2\delta+2\delta C)}{\rm e}^{C(2R+D)} .
\end{eqnarray}
Since $n$ is sufficiently large and $\delta>0$ is arbitrary, we conclude that $\beta\geq\beta(\mu).$
\end{proof}

\section*{\textbf{Acknowledgements}}
Fei Liu is partially supported by 
Natural Science Foundation of Shandong Province under Grant No.~ZR2020MA017 and No.~ZR2021MA064.
Yinghan Zhang is partially supported by Natural Science Foundation of China under Grant No.~12101370
and Natural Science Foundation of Shandong Province under Grant ZR2020QA020.

\end{document}